\newcommand{\cC}{\mathcal{C}}
\newcommand{\E}{\mathbb{E}}
\newcommand{\T}{\mathbb{T}}
\newcommand{\cA}{\mathcal{A}}
\newcommand{\fF}{\mathfrak{F}}
\newcommand{\Zd}{\mathbb Z^d}
\newcommand{\WSF}{\mathsf{WSF}}
\renewcommand{\phi}{\varphi}
\newcommand{\caC}{{\mathcal C}}
\newcommand{\caR}{{\mathcal R}}
\theoremstyle{plain}
\newtheorem{theorem}{Theorem}
\newtheorem{lemma}{Lemma}
\newtheorem{proposition}{Proposition}
\theoremstyle{definition}
\theoremstyle{remark}
\newtheorem{remark}{Remark}
\newcommand{\eqn}[2]{\begin{equation}\label{#1}#2\end{equation}}
\newcommand{\eqnst}[1]{\begin{equation*}#1\end{equation*}}
\newcommand{\eqnspl}[2]{\begin{equation}\begin{split}\label{#1}%
    #2\end{split}\end{equation}}
\newcommand{\eqnsplst}[1]{\begin{equation*}\begin{split}%
    #1\end{split}\end{equation*}}
\def\es{\emptyset}
\def\eps{{\varepsilon}}
\def\Zd{{\mathbb{Z}^d}}
\def\conn{{\leftrightarrow}}
\def\Av{{\mathrm{Av}}}
\def\bbT{{\mathbb{T}}}
\def\Prob{{\mathbf{P}}}
\def\prob{{\mathbf{P}}}
\def\ProbT{P^\bbT}
\def\probT{P^\bbT}
\def\E{{\mathbf{E}}}
\def\ET{E^\bbT}
\def\last{{\mathrm{last}}}
\def\cF{{\mathcal{F}}}
\def\fF{{\mathfrak{F}}}
\def\fG{{\mathcal{G}}}
\def\cA{{\mathcal{A}}}
\def\cG{{\mathcal{G}}}
\def\cC{{\mathcal{C}}}
\def\cT{{\mathcal{T}}}
\def\WSF{{\mathsf{WSF}}}
\def\bp{{\mathbf{p}}}
\begin{document}

\title{{\bf Mean-field avalanche size exponent for sandpiles on
Galton-Watson trees}}

\author{Antal A.~J\'arai$^{\textup{(a)}}$, Wioletta M. Ruszel$^{\textup{(b)}}$ and Ellen Saada$^{\textup{(c)}}$
\\ {\small $^{\textup{(a)}}$
Department of Mathematical Sciences,}\\
{\small University of Bath,}\\
{\small Claverton Down, Bath, BA2 7AY, United Kingdom}
\\ {\small $^{\textup{(b)}}$
Mathematical Institute,}\\
{\small Utrecht University,}\\
{\small Budapestlaan 6, 3584 CD Utrecht, The Netherlands}
\\
{\small $^{\textup{(c)}}$}
{\small CNRS, UMR 8145, Laboratoire MAP5,}\\
{\small
Universit\'{e} Paris Descartes,} \\
{\small 45, rue des Saints P\`{e}res 75270 Paris Cedex 06, France}\\
}

\maketitle

\maketitle

\begin{abstract}
We show that in Abelian sandpiles on infinite Galton-Watson trees,
the probability that the total avalanche has more than $t$ topplings decays
as $t^{-1/2}$. We prove both quenched and annealed bounds, under
suitable moment conditions. Our proofs are based on an analysis
of the conductance martingale of Morris (2003), that was previously
used by Lyons, Morris and Schramm (2008) to study uniform spanning
forests on $\Zd$, $d\geq 3$, and other transient graphs.\\
\noindent
\textbf{Keywords:} Abelian sandpile, uniform spanning tree, conductance martingale, wired spanning forest \\
\noindent
\textbf{Subclass:} 60K35, 82C20
\end{abstract}

\section{Introduction and results}

The Abelian sandpile model was introduced  in 1988 by
Bak, Tang and Wiesenfeld in \cite{BTW88}
as a toy model displaying self-organized criticality.
A self-organized critical model is
postulated to drive itself into a critical state
which is characterized by power-law behaviour of,
for example, correlation functions, without fine-tuning
an external  parameter. For a general
overview we refer to \cite{J14, R06} and to some of
the physics literature \cite{D90, D06}. There are
connections of the sandpile model to Tutte polynomials \cite{CB03},
logarithmic conformal
invariance \cite{R13}, uniform spanning trees \cite{D90},
and neuronal communication \cite{BP03}.

Consider a finite connected graph $G=(V\cup \{s\},E)$ with a
distinguished vertex $s$ called
the sink. Assign to each vertex $x\in V$ a natural number
$\eta_x \in \mathbb{N}$ representing
its {\it height} or {\it mass}.

The Abelian sandpile model is defined as follows: choose
at every discrete time step a vertex
$x\in V$ uniformly at random and add {\it mass} 1 to it.
If the resulting mass at $x$ is at least the number of neighbours of $x$,
then we \textit{topple} the vertex  $x$
by sending unit mass to each neighbour of $x$.
Mass can leave the system via the sink $s$, according to a rule depending on the graph.
 The topplings  in $V$ 
will continue until all the vertices
in $V$ are \textit{stable}, that is, they have mass which is smaller
than the number of neighbours.
The sequence of consecutive topplings is called an \textit{avalanche}.
The order of topplings does not matter,
hence the model is called Abelian.
The unique stationary measure for this Markov chain is the
uniform measure on the recurrent configurations.

There are various interesting quantities studied,
for example the avalanche size or diameter
distribution depending on the underlying graph
\cite{BHJ17,DM90, H19, JL93}, the toppling durations,
infinite-volume models \cite{AJ04,MRS02},
and continuous height analogues \cite{JRS15}.

In particular, it is known that on a regular tree (Bethe lattice)
the probability that an avalanche of size at least $t$ occurs,
decays like a power law with mean-field exponent $-1/2$
for large $t$ \cite{DM90}, and the same is true on the
complete graph \cite{JL93}. Very recently, this has been
extended by Hutchcroft \cite{H19} to a large class of graphs
that are, in a suitable sense, high-dimensional. No assumptions
of transitivity are needed in \cite{H19}, but the proofs require
bounded degree. In particular, \cite{H19} shows that
the exponent $-1/2$ holds for the lattice $\mathbb{Z}^d$ for $d\geq 5$,
and also for bounded degree non-amenable graphs. See also
\cite{BHJ17} for related upper and lower bounds on critical
exponents on $\mathbb{Z}^d$ for $d \ge 2$.

In \cite{RRS12} sandpile models on random binomial (resp. binary)
trees are considered, i.e. every vertex has two descendants with
probability $p^2$, one with probability $2p(1-p)$ and none with
probability $(1-p)^2$ (resp. 2 offspring with probability $p$ and
none with probability $1-p$); there, in a toppling, mass 3 is
ejected by the toppling site, independently of its number of
neighbours; hence there is \textit{dissipation}  (that is,
there is mass which is not sent to a neighbouring site, but which is lost) 
when this number is less than 2. It is proven in \cite{RRS12} 
that in a small supercritical regime $p>1/2$ the (quenched and annealed)
avalanche sizes decay exponentially, hence the model is not
critical. Moreover  (see \cite{RRS17}) 
the critical branching parameter for these models is $p=1$.
The reason is that as soon as there exist vertices with degree
strictly less than $2$, the extra dissipation
thus introduced to the system is destroying the criticality of the model. \\

In this paper, we consider an Abelian sandpile model on a
supercritical Galton-Watson branching tree $\bbT$ with
possibly unbounded offspring distribution $\bp=\{p_k\}_{k\geq 0}$
under some moment assumptions. We prove that the probability that
the total avalanche has more than $t$ topplings decays
as $t^{-1/2}$. 
Our proofs rely on a quantitative analysis of the
\textit{conductance martingale} of Morris \cite{LMS08, M03},
that he introduced
to study uniform spanning forests on $\mathbb{Z}^d$ and other
transient graphs). The use  
of this martingale  is the major novelty of our paper, 
and our hope is that this gives insight into
the behaviour of this martingale on more general graphs. \\

Our methods are very different
from those of \cite{H19}.   While
the results of \cite{H19} are stated for bounded degree graphs
(and more generally for networks with vertex conductances bounded
away from $0$ and infinity), Hutchcroft's approach can also be applied
to unbounded degree graphs: In our context, under suitable moment
conditions, the proof methods of \cite{H19} would yield the
$t^{-1/2}$ behaviour with an extra power of $\log t$ present 
\cite{HPC}. \\

We write $\nu_{\T}$ for the probability
distribution of the sandpile model conditioned on the environment $\bbT$.
Let $S$ denote the total number of topplings upon addition at the root,
which is a.s. finite (see later on for details). Then we prove the following.
\begin{theorem}
\label{thm:intro-qn}
Conditioned on the event that $\bbT$ survives, there exists $C = C(\bp)$  such that
for all $t $ large enough depending on $\bbT$ we have
\eqnst
{  \nu_{\T} [ S > t ]
  \le C \, t^{-1/2}. }
Furthermore if $\bp$ has an exponential moment then there exists
$c_0 = c_0(\bbT)$ that is a.s.~positive
on the event that $\bbT$ survives, such that we have
\eqnst
{ \nu_{\T} \big[ S > t \big]
  \ge  c_0 \, t^{-1/2}. }
\end{theorem}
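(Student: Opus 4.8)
The plan is to connect the total number of topplings $S$ at the root to a hitting-time estimate for Morris's conductance martingale associated with the wired spanning forest on $\bbT$. Recall the classical picture on the infinite tree \cite{DM90}: the avalanche upon adding at the root is governed by a critical branching process — more precisely, by Wilson's algorithm / loop-erased random walk, the set of sites toppled is (a truncation of) a wired spanning forest component, and the number of topplings is comparable to the size of a critical Galton--Watson tree, whose survival-to-generation-$n$ probability is of order $1/n$ and whose total size has the $t^{-1/2}$ tail. On a general supercritical Galton--Watson tree $\bbT$ the offspring structure is inhomogeneous, and the correct substitute for ``critical branching'' is transience of simple random walk together with the martingale of \cite{M03}: if $\mathcal{C}_n$ denotes the sum of conductances out of the first $n$ sites explored by Wilson's algorithm started at the root (with the appropriate wired boundary), then $\mathcal{C}_n$ is a nonnegative martingale whose increments have variance controlled by the effective conductances, and $S>t$ corresponds, up to constants, to $\mathcal{C}$ not having been absorbed at $0$ within $\asymp t$ steps.

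For the \textbf{upper bound} I would proceed as follows. First, express $\nu_{\bbT}[S>t]$ in terms of the forest: use the burning/Dhar bijection so that the avalanche cluster at the root is dominated by the past of the root in the $\WSF$ on $\bbT$ (wired at infinity, with the branching structure playing the role of the ``escape to infinity''), and $S$ is bounded by the number of vertices in that past. Second, run Wilson's algorithm rooted at infinity to build this past, and track Morris's conductance martingale $(\mathcal{C}_n)$ along the exploration; the key input, available from \cite{M03, LMS08}, is that $\mathcal{C}_n$ is a martingale with $\mathcal{C}_0$ of order $1$ and with a uniform lower bound on the conditional variance of its increments so long as it stays positive — here is where the moment conditions on $\bp$ enter, guaranteeing that effective conductances on $\bbT$ are bounded below on the survival event and that the walk is transient with quantitative estimates. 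An $L^2$/optional-stopping argument (the standard ``a positive martingale with variance bounded below must hit $0$ quickly'' estimate, exactly as in the treatment of the UST wired forest in \cite{LMS08}) then gives that the absorption time $\tau = \inf\{n : \mathcal{C}_n = 0\}$ satisfies $\Pr[\tau > n] \le C n^{-1/2}$. Since $S \le \tau$ up to constants, this yields $\nu_{\bbT}[S>t] \le C t^{-1/2}$ for $t$ large depending on $\bbT$ (the dependence on $\bbT$ absorbing the a.s.\ finite values of the effective conductance and the size of $\mathcal{C}_0$ near the root).

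For the \textbf{lower bound}, under the exponential moment assumption, the strategy is to show the martingale $\mathcal{C}_n$ survives to time $n$ with probability $\gtrsim n^{-1/2}$, again via a second-moment (Paley--Zygmund) argument: one needs an upper bound on $\E[\mathcal{C}_n^2 \mid \text{survival}]$ matching the lower bound on $\E[\mathcal{C}_n]$, which requires controlling the increment variances \emph{from above} — this is precisely where an exponential moment on the offspring law is needed, to rule out rare huge-degree vertices that would make the conductance martingale jump and inflate its second moment. Combined with a comparison showing $S \gtrsim \tau$ (the avalanche cluster is at least a constant fraction of the explored past, using that topplings propagate through the cluster), one gets $\nu_{\bbT}[S>t] \ge c_0 t^{-1/2}$ with $c_0 = c_0(\bbT) > 0$ a.s.\ on survival.

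The \textbf{main obstacle} I anticipate is establishing the uniform-in-$n$ lower bound on the conditional variance of the conductance martingale increments on a generic (inhomogeneous, possibly large-degree) Galton--Watson environment: on $\Zd$ this comes from bounded geometry, whereas here one must show that on the survival event the relevant effective conductances seen along a typical Wilson exploration are bounded away from $0$ with overwhelming probability, uniformly along the exploration, using the supercriticality of $\bbT$ and the moment hypotheses — and similarly, for the lower bound, the matching variance upper bound forcing the exponential-moment assumption. Making the ``$t$ large enough depending on $\bbT$'' and ``$c_0 = c_0(\bbT)$'' dependence explicit will require a Borel--Cantelli / ergodic argument over the tree showing these conductance bounds hold eventually almost surely.
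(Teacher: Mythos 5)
Your martingale analysis of a single wired-spanning-forest component is essentially the paper's approach: the $L^2$/optional-stopping argument for the upper tail of $|\fF_o|$, the Paley--Zygmund second-moment argument for the lower tail, and the correct identification that the exponential moment on $\bp$ is needed only to control the increment variances \emph{from above} (the paper's Propositions \ref{prop:LD-conduct} and \ref{prop:var-bnd}), while the variance lower bound comes from anchored expansion of the supercritical tree (Chen--Peres), not from any moment condition --- so, contrary to what you suggest, the quenched upper bound requires no moment hypothesis beyond supercriticality.

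However, there is a genuine gap in your upper bound: you assert that ``$S$ is bounded by the number of vertices in the past of the root'' and that ``$S \le \tau$ up to constants.'' This conflates the total number of topplings $S$ with the size of the avalanche cluster $|\Av(\eta)|$ (or of a single wave). In fact $S = |W^1(\eta)| + \dots + |W^{\mathrm{last}}(\eta)|$, where each vertex topples once per wave containing it, so $S$ can be as large as $N \cdot |W^1|$ with $N$ the number of waves; only a \emph{single} wave is comparable to a forest component, and even that comparison (Lemma \ref{lemma:SP-WSF}) carries the factor $G^{\bbT}(o,o) = \mathbb{E}_{\nu_\T}[N]$ because the first wave is size-biased among waves. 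The bulk of the paper's work on the upper bound (Section \ref{subsec:upper}) is precisely to control $N$: one shows $\probT[N \ge k+1]$ is bounded by $\prod_{v \in \bbT'_k}(1+\cC(v))^{-1}$, which decays exponentially in $k$ by the isoperimetric input, and then splits $\probT[S>t]$ over the value of $N$, paying $|W^1| > t/(k+1)$ on the event $N = k+1$ and using independence of the subtrees hanging off generation $k$. Without this wave decomposition and the exponential control of $N$, your argument only bounds $\nu_\T[|\Av| > t]$, not $\nu_\T[S>t]$. A similar (milder) issue affects your lower bound: Lemma \ref{lemma:SP-WSF} is a one-sided inequality, so to get a \emph{lower} bound on $\nu_\T[|W^1|>t]$ the paper constructs, via Wilson's algorithm at the first branching vertex $v^*$, an explicit event on which a specific wave $W^{N-|v^*|}$ contains a full $\WSF_o$ component of a subtree, and then uses the nesting $W^1 \supset W^{N-|v^*|}$; your ``$S \gtrsim \tau$'' comparison would need to be fleshed out along these lines.
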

We also have the following annealed bounds.
\begin{theorem}
\label{thm:intro-an}
 Let $\prob$ denote the probability distribution for the
 Galton-Watson trees, and $\E$  the corresponding expectation.
There exists $C = C( \bp ) > 0$ such that
\eqnst
{ \E \big[ \nu_{\T} [ S > t ] \,\big|\, \text{$\bbT$ survives} \big]
  \le C \, t^{-1/2}. }
and if $\bp$ has exponential moment then there exists $c = c( \bp )$ such that
\eqnst
{   \E \big[  \nu_{\T} [ S > t ] \,\big|\, \text{$\bbT$ survives} \big]
  \ge c \, t^{-1/2}. }
\end{theorem}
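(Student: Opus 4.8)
The plan is to derive Theorem~\ref{thm:intro-an} from the quenched bounds of Theorem~\ref{thm:intro-qn} by integrating over the environment $\bbT$; the one delicate point is that the quenched estimates hold only once $t$ exceeds a threshold that depends on $\bbT$, so this threshold must be controlled on average. The first step is therefore to record the proof of Theorem~\ref{thm:intro-qn} in a quantitative form: there are a constant $C=C(\bp)$ and a measurable threshold $T_0=T_0(\bbT)$, a.s.\ finite on the survival event, with
\[
  \nu_{\T}[S>t]\ \le\ C\,t^{-1/2}\qquad\text{for all }t\ge T_0(\bbT),
\]
and, under the exponential moment hypothesis, a measurable $T_1=T_1(\bbT)$, a.s.\ finite on survival, together with $c_0=c_0(\bbT)>0$ a.s.\ on survival such that $\nu_{\T}[S>t]\ge c_0(\bbT)\,t^{-1/2}$ for all $t\ge T_1(\bbT)$. (The phrase ``$t$ large enough depending on $\bbT$'' in Theorem~\ref{thm:intro-qn} is exactly the assertion that $T_0$ and $T_1$ are a.s.\ finite on survival.) The new input needed for Theorem~\ref{thm:intro-an} is a \emph{tail bound} on $T_0$, namely $\prob[\,T_0(\bbT)>t,\ \bbT\text{ survives}\,]=O(t^{-1/2})$, with room to spare.

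Granting that tail bound, the annealed upper bound follows by splitting on whether $t\ge T_0(\bbT)$:
\[
  \E\big[\nu_{\T}[S>t]\,\mathbf{1}_{\{\bbT\text{ survives}\}}\big]
  \ \le\ C\,t^{-1/2}\,\prob[\bbT\text{ survives}]\;+\;\prob[\,T_0(\bbT)>t,\ \bbT\text{ survives}\,]
  \ =\ O(t^{-1/2}),
\]
and dividing by $\prob[\bbT\text{ survives}]>0$ gives the claim with a constant depending only on $\bp$. To prove the tail bound I would trace through the proof of the quenched upper bound and identify what governs $T_0(\bbT)$: essentially, how fast the Galton--Watson tree grows near the root (the speed of convergence in $Z_n/m^n\to W$, with $m=\sum_k kp_k>1$) and how fast the effective conductances seen by the relevant exploration of $\bbT$ — the conductance martingale of \cite{M03} that drives the proof of Theorem~\ref{thm:intro-qn} — settle near their limiting values. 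On a supercritical Galton--Watson tree both stabilise at a geometric rate, so under the moment hypothesis already in force for Theorem~\ref{thm:intro-qn} one should obtain for $T_0(\bbT)$ a tail decaying comfortably faster than $t^{-1/2}$ (stretched-exponentially fast when $\bp$ has an exponential moment). As an alternative one could instead run the conductance-martingale argument directly under the annealed law $\E\otimes\nu_{\T}$, where averaging over the environment turns the exploration of the avalanche cluster into a genuine critical finite-variance branching-type process and the threshold disappears altogether; but extracting the quantitative form from the quenched argument seems the shorter route.

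The annealed lower bound is immediate from the quenched one. Applying the quenched lower bound pointwise on $\{t\ge T_1(\bbT)\}$,
\[
  \E\big[\nu_{\T}[S>t]\,\mathbf{1}_{\{\bbT\text{ survives}\}}\big]
  \ \ge\ t^{-1/2}\,\E\big[c_0(\bbT)\,\mathbf{1}_{\{T_1(\bbT)\le t\}}\,\mathbf{1}_{\{\bbT\text{ survives}\}}\big],
\]
and since $\mathbf{1}_{\{T_1(\bbT)\le t\}}\uparrow\mathbf{1}_{\{\bbT\text{ survives}\}}$ as $t\to\infty$, monotone convergence gives $\E[c_0(\bbT)\,\mathbf{1}_{\{T_1(\bbT)\le t\}}\,\mathbf{1}_{\{\bbT\text{ survives}\}}]\to\E[c_0(\bbT)\,\mathbf{1}_{\{\bbT\text{ survives}\}}]$, a strictly positive (a priori possibly infinite) limit because $c_0>0$ a.s.\ on the positive-probability event $\{\bbT\text{ survives}\}$. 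Hence $\E[\nu_{\T}[S>t]\mid\bbT\text{ survives}]\ge c\,t^{-1/2}$ for all large $t$ with $c=c(\bp)>0$; for the remaining bounded range of $t$ one uses that $t\mapsto\E[\nu_{\T}[S>t]\mid\bbT\text{ survives}]$ is non-increasing and positive, shrinking $c$ if necessary. The exponential moment hypothesis enters only through the quenched lower bound, matching the statement of Theorem~\ref{thm:intro-an}.

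The main obstacle is the tail estimate on $T_0(\bbT)$ behind the upper bound: this is the only place where the fluctuations of the conductance martingale have to be made quantitative against the geometry of the Galton--Watson tree, and where the moment assumptions on $\bp$ are really used. The two integrations above and the treatment of small $t$ are routine.
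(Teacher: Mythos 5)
Your lower bound argument is correct and is essentially the paper's: the quenched lower bound (Theorem \ref{thm:av-lb}) actually holds for \emph{all} $t>0$ with a random constant $c_0(\bbT)>0$ a.s.\ on $F$, so one just takes expectations and uses $\E[c_0(\bbT)\mathbf{1}_F]>0$; your extra threshold $T_1$ and the monotone-convergence step are harmless but unnecessary.

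The upper bound, however, has a genuine gap. Everything rests on the tail estimate $\prob[T_0(\bbT)>t,\,F]=O(t^{-1/2})$, which you do not prove, and your heuristic for it appeals to ``the moment hypothesis already in force'' --- but the annealed upper bound is claimed under no moment assumption whatsoever (only $1<\sum_k k p_k\le\infty$). Moreover, the threshold that actually emerges from the quenched argument is of order $\exp\big(c\max_{k<K_2}|\bbT'_k|\big)$: the decomposition \eqref{e:three-terms} requires $t$ so large that the cut generation $\overline{K}(t)=\min\{k:|\bbT'_k|\ge C_3\log t\}$ exceeds the random scales $K_1,K_2$. For heavy-tailed offspring (say $p_k\sim k^{-2}$, which is allowed) already $|\bbT'_1|$ has a $1/n$ tail, so this threshold has a tail decaying only like $1/\log t$ --- nowhere near $t^{-1/2}$. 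There is also a structural mismatch: the quenched proof really yields $\nu_{\T}[S>t]\le X(\bbT)\,t^{-1/2}$ with a \emph{random prefactor}, which cannot be absorbed into a random threshold at the same power of $t$, so treating Theorem \ref{thm:intro-qn} as a black box with a deterministic $C(\bp)$ is not a safe starting point. The paper's route is different: it returns to the three bounds \eqref{e:term1-bound}, \eqref{e:term2-bnd}, \eqref{e:term3-bnd}, each of the form (random prefactor)$\times\, t^{-1/2}$, and proves the prefactors are integrable. The crux is the finiteness of $\E\big[\max\{1,\caR(o)\}\,N_1^{1/2}K_2^{1/2}\big]$, whose new ingredient is an exponential tail for the effective resistance $\caR(o)=G^\bbT(o,o)$ from the root to infinity, obtained by a series/parallel decomposition along the branching vertices of $\bbT'$, combined with the exponential tails of $N_1$ and $K_2$. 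Your proposal contains no substitute for this resistance estimate, and without it (or the threshold tail bound you posit, which fails in the stated generality) the annealed upper bound does not follow.
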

%

The paper is organized as follows. First in section \ref{sec:notation} we introduce the 
setting and notation and in particular we recall the decomposition of avalanches into waves. 
 In Section \ref{sec:wave-ub} we  prove upper bounds on the waves and in the 
subsequent Section \ref{sec:wave-lb} corresponding lower bounds. 
We deduce the corresponding bounds on $S$ from the bounds on the waves in 
Section \ref{sec:waves} and finally we prove annealed bounds in Section \ref{sec:annealed}.

\section{Notation and preliminaries}\label{sec:notation}
\subsection{Abelian sandpile model on subtrees of the Galton-Watson tree}\label{subsec:GW}
We consider a supercritical Galton-Watson process
with offspring distribution  $\bp=\{p_k\}_{k\ge 0}$ 
with mean $\sum_{k\ge 0} k p_k >1$,
starting with a single individual.

Let us fix a realization $\mathbb{T}(\omega)$ of the family tree of
this Galton-Watson process with root  denoted by  $o$.
We will call 
\begin{equation}\label{eq:F-Tsurvives}
 F :=  \{ \text{$\mathbb{T}$ survives} \},
\end{equation} 
and assume that $\omega \in F$. 
 The random environment $\bbT = \bbT(\omega)$ is defined on a probability space
$(\Omega, \cG, \prob)$.
The edge set of $\mathbb{T}$ is denoted by $E(\mathbb{T})$.
 We use the notation $\mathbb{T}$ to refer to both the tree and to its
vertex set. 
 Take  a subset $A \subset \mathbb{T}$ and let us denote by
 $\partial_E A$ the \textit{edge boundary} of $A$,
i.e. the set of edges $e= (v,u) \in E(\mathbb{T})$ such that
$v \in A$ and $u \in A^c$,
where $A^c$ is the complement of $A$ in $\mathbb{T}$.
We denote by $|A|$ the cardinality of $A$.
 We say that $A$ is \textit{connected} if
the subgraph induced in $\bbT$ is connected.
Then the
 \textit{distance} $d(u, v)$
 between the two vertices $u,v \in A$ is defined as  the number of edges
 of the shortest path joining them within $A$.
 For $v\in \mathbb{T}$ we write $|v|=d(o, v)$.
 The \textit{ (outer) vertex boundary} $\partial_V A $
 is defined as follows. A vertex  $v\in \mathbb{T}$
 belongs to $\partial_V A$ if $v\in A^c$ and there exists
 $u\in A$ such that $(u,v) \in E(\bbT)$.
 Let $\partial^{in}_V A = \{ v\in A: \exists \, w\in A^c \, \text{such that} \, (v,w) \in E(\bbT)\}$
 be the \textit{internal vertex boundary} of $A$.
 We will further use the notation $(V, o)$ for a graph with vertex set $V$ and root $o$.

By a result of 
Chen \& Peres (\cite[Corollary 1.3]{CP04})  
we know that conditioned on $F$ the tree $\bbT$ satisfies
\textit{anchored isoperimetry},  meaning that the edge boundary of a set containing a 
fixed vertex is larger than some positive constant times the volume. This isoperimetric 
inequality ensures an exponential growth condition on the random tree. 

They proved  (case \textit{(ii)} in the proof of
\cite[Corollary 1.3]{CP04}) that there
exists $\delta_0 = \delta_0(\bp) > 0$
and a random variable $N_1 = N_1(\bbT)$ 
that is a.s.~finite on $F$, 
such that for any finite connected $o \in A \subset \bbT$
with $|A| \ge N_1$ we have
\eqn{e:CP}
{ | \partial_E A | \ge \delta_0 |A|. }
It also follows from the proof of \cite[Corollary 1.3]{CP04} that
there exists $c_1 = c_1(\bp) > 0$ such that
\eqn{e:CP-tail}
{ \Prob [ N_1 \ge n \,|\, F ]
  \le e^{- c_1 n}, \quad n \ge 0. }

We denote by $\bbT_{k}=\{ v\in \bbT: d(o,v)=k\}$ 
(respectively $\bbT_{<k} =\{ v\in \bbT:d(o,v) < k \}$)  the set of vertices
at  precisely distance $k$ (respectively at distance less than $k$) from the root, 
and analogously we define $\bbT_{\leq k}$. We write $\bbT(v)$
for the subtree of $\bbT$ rooted at $v$.  For a vertex $v\in  \bbT $
 we denote by $\deg(v)$  the \textit{degree}  $\deg_\bbT(v)$ 
 of  vertex $v$ within $\bbT$ (i.e.
the number of edges in $E(\bbT)$ with one end equal to $v$),
and we denote by $\deg^+(v)$
the forward degree  $\deg^+_\bbT(v)$  of $v$,
that is the number of children of $v$.

For some finite connected subset $H \subset \bbT$ such that $o \in H$
we write $\bbT^*_H$ for the finite connected wired graph,
i.e. such that each vertex in $H^c$ is identified with some cemetery vertex $s$,
called a sink. For a vertex $v\in H $
 we denote by $\deg_H(v)$  the \textit{degree} of  vertex $v$ within $H$ (i.e.
the number of edges in $E(\bbT^*_H)$ with one end equal to $v$),
and we denote by $\deg^+_H(v)$
the forward degree of $v$ within $H$.
We fix such a subset $H$ from now on.

 We gather in the following subsections  results we need
 on the Abelian sandpile model, for which we refer for instance
to \cite{D90,H08,J14,R06}.

\subsubsection{Height configurations and legal topplings}\label{subsec:height-legal}
Height configurations on $\bbT^*_H$ are elements
$\eta\in \{0,1,2,\cdots\}^{H}$.
For $u\in H$, 
$\eta_u$ denotes the height at vertex $u$.
A height configuration $\eta$
is \textit{stable} if $\eta_u\in \{0, 1, 2, \dots, \deg_H(u)-1\}$ for all $u\in H$.
Stable configurations are collected in the set
$\Omega_H$. Note that  $\deg_H(u)$, $u \in  H$,  and $\Omega_{H}$,  depend
on the realization of the Galton-Watson tree $\mathbb{T}$, hence are random.

For a configuration $\eta$, we define the \textit{toppling
operator} $T_u$ via
\[
\left(T_u (\eta)\right)_v=\eta_v-\Delta^{H}_{uv}
\]
where $\Delta^{H}$ is the \textit{toppling matrix}, indexed by vertices $u,v\in H$ and
defined by
\begin{equation*}\label{top}
\Delta^{H}_{uv}=
\begin{cases}
  \deg_H(u),  &\mbox{if}\ u=v \\
  -1,  &\mbox{if}\ (u,v)\in  E(\bbT^*_H).
\end{cases}
\end{equation*}
In words, in a toppling at $u$, $\deg_H(u)$  particles are removed from $u$, and
every neighbour of $u$ receives one particle. Note that $\Delta^{H}$ depends
on the realization of $\bbT$ which hence is random in contrast to the case
of the binary tree studied in \cite{RRS12}.
Therefore there is no dissipation in a toppling, except for the particles
received by the sink of $\bbT^*_H$.

A toppling at $u\in H$ in configuration $\eta$  is called \textit{legal} if
$\eta_u \ge \deg_H(u)$.
A sequence of legal topplings is a composition
$T_{u_n}\circ\cdots\circ T_{u_1} (\eta)$ such that
for all $k=1,\cdots,n$ the toppling at $u_k$ is legal
in $T_{u_{k-1}}\circ\cdots\circ T_{u_1} (\eta)$. The \textit{stabilization}
of a configuration $\eta$ is defined
as the unique stable configuration $\mathrm{S}(\eta)\in\Omega_H$
that arises from $\eta$ by a sequence of legal topplings.
Every $\eta\in \{0,1,2,\cdots\}^{H}$ can be stabilized
thanks to the presence of a sink.
\subsubsection{Addition operator and Markovian dynamics}\label{subs:adop}
Let $u\in H$, the \textit{addition operator} is the map
$a_u:\Omega_H\to\Omega_H$ defined via
\begin{equation*}
a_u \eta = \mathrm{S} (\eta+\delta_u)
\end{equation*}
where $\delta_u\in\{0,1\}^{H}$ is such that
$\delta_u(u)=1$ and $\delta_u(z)=0$ for $z\in H,z\not=u$.
In other words, $a_u\eta$ is the effect of an addition
of a single grain at $u$ in $\eta$,
followed by stabilization.

The dynamics of the sandpile model can be defined as a
discrete-time Markov chain $\{\eta(n), n\in \mathbb{N}\}$
on $\Omega_H$  with 
\begin{equation}\label{markov}
\eta(n)= \prod_{i=1}^n a_{X_i} \eta(0)
\end{equation}
where $X_i, 1\le i\le n$, are i.i.d.\ uniformly chosen vertices in $H$.
\subsubsection{Recurrent configurations, spanning trees and
stationary measure}\label{subsec:rec-stat}
The  set of \textit{recurrent} configurations  $\mathrm{R}_H$
of the sandpile model corresponds to the recurrent states of
the Markov chain \eqref{markov} defined above.
 This  Markov chain has a unique stationary probability measure
$\nu_H$  which is the uniform measure on the set $\mathrm{R}_H$.
There is a bijection between $\mathrm{R}_H$ and the spanning
trees of $\bbT^*_H$ \cite{MD92}, that is useful in analyzing $\nu_H$.

Let $o \in  H_1 \subset H_2 \subset \cdots \subset H_n \subset \cdots$
be a sequence of  finite sets with union equal to $\mathbb{T}$. The
\emph{sandpile measure $\nu_{\T}$ on $\mathbb{T}$} is defined
as the weak limit of the stationary measures $\nu_{H_n}$ for
the sandpile model on $\bbT^*_{H_n}$, when the limit exists.
By \cite[Theorem 3]{JW12}, an infinite volume
sandpile measure $\nu_{\T}$  on $\mathbb{T}$ exists if
each tree in the $\WSF$ (Wired Uniform Spanning
Forest)  on $\mathbb{T}$ has one end almost surely.
The $\WSF$ is defined as the weak limit of the uniform
spanning trees  measure  on $\bbT^*_{H_n}$, as $n \to \infty$.
We refer to \cite{LP16} for background on wired spanning forests.
We define the related measure $\WSF_o$ in the following way.
Identify $o$ and $s$ in $\bbT^*_{H_n}$  and let  $\WSF_o$ be
the weak limit of the uniform spanning tree in the resulting graph  $G_n$  as 
 $n \to \infty$. {}From now on, when working on a finite set $H$,
we will abbreviate this procedure by $H\to\bbT$ (or $H$ goes to $\bbT$). 

Let $\fF_o$ denote the  connected
component of $o$ under $\WSF_o$. Almost sure finiteness of
$\fF_o$ is equivalent to one endedness of the component of
$o$ under $\WSF$, see \cite{LMS08}. 
The one end property for trees with bounded degree in the $\WSF$
of Galton-Watson trees was proven by \cite[Theorem 7.2]{AL07}.
In the unbounded case it follows directly by  \cite[Theorem 2.1]{H18}.
Draw a configuration from the measure  $\nu_{\T}$,  add a particle
at $o$ and carry out all possible topplings. By
\cite[Theorem 3.11]{JR08}, one-endedness of the components and
transience of $\bbT$ (for simple random walk) imply that there
will be only finitely many topplings  $\nu_{\T}$-a.s.,
and as a consequence the total number $S$ of topplings  is a.s. finite.
\subsubsection{Waves, avalanches and Wilson's method}\label{subs:WandA}
Given a stable height configuration $\eta$ and $o\in H$,
we define the \textit{avalanche cluster} $\Av_H(\eta)$
induced by addition at $o$ in $\eta$ to be
the set of vertices in $H$ that have to be toppled
at least once in the course of the stabilization
of $\eta+\delta_o$. Avalanches can be decomposed
into waves  (see \cite{IKP94}, \cite{JR08})
corresponding to carrying out topplings in a special order.
The \textit{first wave} denotes the set of vertices in $H$
which have to be toppled in course of stabilization until $o$
has to be toppled again.  The \textit{second wave} starts again
from $o$ and collects all the vertices involved in the toppling
procedure until $o$ has to be toppled for the second time etc.

Let $N_H(\eta)$ denote the number of waves caused by addition
at $o$ to the configuration $\eta$ in $H$. For fixed $\bbT$,
the avalanche can be decomposed into
\begin{equation*}\label{dec-avalanche}
\Av_H(\eta) = \bigcup_{i=1}^{N_H(\eta)}  W_H^i(\eta)
\end{equation*}
where $W_H^i(\eta)$ is the $i$-th wave. We write
$W^{\last}_H(\eta)$ for $W^{N_H(\eta)}_H(\eta)$.
 Further we denote by
\begin{equation}\label{def:S}
S_H(\eta)= |W_H^1(\eta)| + \cdots+ |W_H^{\last}(\eta)|
\end{equation}
the total number of topplings in  the  avalanche $\Av_H(\eta)$.

Note that waves can be defined on the full tree $\bbT$
as well where now it is possible to have infinitely many waves.
However, due to the almost sure finiteness of the avalanche,
$N_H$ under $\nu_H$ converges weakly to $N$
under the sandpile measure which is  $\nu_{\T}$ -a.s. finite.
Furthermore  $W_H^i$ converges weakly to $W^i$.
We thus have
\begin{eqnarray*}\label{dec-avalanche-infini}
\Av(\eta) &=& \bigcup_{i=1}^{N(\eta)}  W^i(\eta)\\
\label{def:S-infini}
S(\eta)&=& |W^1(\eta)| + \cdots+ |W^{\last}(\eta)| \\\label{SgeAv}
S(\eta)&\ge & | \Av(\eta) |.
\end{eqnarray*}
\begin{lemma}\label{lem:waves}
For any stable configuration $\eta$ on $\bbT$ we have the following.
\begin{enumerate}
\item[(i)] $W^1(\eta)$ equals the connected component of $o$ in
$\{ v\in \bbT: \eta_v =\deg(v)-1\}$ (possibly empty);
\item[(ii)] $N(\eta) = 1 +  \max\{k \in \mathbb{N}: \bbT_k \subset W^1(\eta) \}$,
with the right hand side interpreted as $0$ when $W^1(\eta) = \es$; 
\item[(iii)] $W^1 (\eta) \supset \dots  \supset W^{\mathrm{last}} (\eta)$.
\end{enumerate}
\end{lemma}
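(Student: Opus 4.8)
The three statements are classical facts about waves in the Abelian sandpile (on finite wired graphs), transferred to the infinite tree $\bbT$ via the weak limit $H \to \bbT$; the plan is to establish them first on $\bbT^*_H$ and then pass to the limit, using that for fixed $\eta$ the waves $W^i_H(\eta)$ stabilize as $H$ grows.

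\medskip

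\textbf{Step 1: Part (i) on a finite wired graph.} On $\bbT^*_H$, start from the recurrent configuration $\eta$, add a grain at $o$, and topple $o$. The key structural fact about a single wave (see \cite{IKP94,JR08}) is that in the first wave \emph{each vertex topples at most once}, and a vertex $v\neq o$ topples in the first wave if and only if it is connected to $o$ by a path of vertices each of which topples. Since a vertex $v$ topples (exactly once, receiving exactly one grain from each toppled neighbour, and on a tree it has exactly one neighbour toward $o$) precisely when its initial height is maximal, i.e. $\eta_v = \deg_H(v)-1$, an induction on the distance from $o$ shows $W^1_H(\eta)$ is exactly the connected component of $o$ in $\{v : \eta_v = \deg_H(v)-1\}$ — with the caveat that near the sink $\deg_H$ may differ from $\deg_\bbT$. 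Taking $H\to\bbT$, for any fixed finite connected set of vertices the degrees $\deg_H$ agree with $\deg_\bbT$ once $H$ is large, so $W^1_H(\eta) \to W^1(\eta)$ and the limiting description uses $\deg(v)=\deg_\bbT(v)$, giving (i).

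\medskip

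\textbf{Step 2: Part (iii), monotonicity of waves.} This is the standard nesting property of successive waves: the $(i+1)$-st wave is contained in the $i$-th. The cleanest argument is again on $\bbT^*_H$: after the $i$-th wave, every vertex that toppled has had its height lowered in a controlled way, and one checks that a vertex can topple in wave $i+1$ only if it toppled in wave $i$; the toppled set in wave $i+1$ is the component of $o$ in the vertices that are again at maximal height, which is a subset of the wave-$i$ toppled set. I would cite \cite{IKP94,JR08} for this and then pass to the limit as in Step 1. In particular $W^{\last}(\eta)\subset\cdots\subset W^1(\eta)$.

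\medskip

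\textbf{Step 3: Part (ii), the number of waves.} Using (i) and (iii): a new wave is initiated precisely as long as $o$ must topple again, and $o$ topples in wave $i+1$ iff, after wave $i$, the grain sent back toward $o$ re-creates a maximal height at $o$ — equivalently, iff wave $i$ ``reached back'' to $o$. On a tree, wave $i$ returns a grain to $o$ iff the wave reached all the way around, which by the nested structure happens iff the component $W^1(\eta)$ contains the entire sphere $\bbT_{i-1}$ (once a full sphere topples, all its grains funnel back and $o$ must topple). So the number of waves is $1+\max\{k : \bbT_k\subset W^1(\eta)\}$; if $W^1(\eta)=\es$ then $o$ itself was not at maximal height, the single ``wave'' is empty and $N(\eta)$ is read off as $0$ by the stated convention, i.e. $N(\eta)$ should be interpreted correctly in the degenerate case. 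The main thing to get right here is the equivalence ``$o$ topples again $\iff$ a full sphere is contained in $W^1$'': one direction is clear (a full toppled sphere sends $|\bbT_{k}|$ grains inward, forcing topplings down to $o$), and for the converse one uses that on a tree the wave is ``radial'' — $W^1(\eta)$ restricted to any ray is an initial segment by (i), so if some vertex at distance $k$ fails to be at maximal height, wave $k$ cannot complete that ray and the returning grains never reach $o$.

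\medskip

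\textbf{Main obstacle.} The delicate point is \emph{not} any single computation but handling the passage $H\to\bbT$ cleanly together with the sink: on $\bbT^*_H$ the boundary vertices of $H$ have $\deg_H$ inflated by edges to the sink, so the ``maximal height'' condition and the wave structure near $\partial H$ differ from the tree; I would argue that for fixed $\eta$ and fixed $k$, once $H \supset \bbT_{\le k+1}$ the waves agree with those on $\bbT$ inside $\bbT_{\le k}$, and then invoke the weak convergence $W^i_H \Rightarrow W^i$ already recorded in Section \ref{subs:WandA}. The other point requiring care is simply citing the finite-graph wave facts (single toppling per wave, nesting) in a form matching our tree setup rather than re-deriving them.
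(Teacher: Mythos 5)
Your parts (i) and (iii) are in essence the paper's argument (a vertex of $\partial_V W^1$ receives only one grain and stays stable because its height is at most $\deg(v)-2$; after a wave only the vertices that returned to maximal height can topple again), but you route everything through $\bbT^*_H$ and a limit $H\to\bbT$. The paper does not do this: the lemma is a deterministic statement about an arbitrary stable $\eta$ on $\bbT$, the waves on $\bbT$ are defined directly by carrying out topplings, and the whole proof is carried out on the infinite tree. Your ``main obstacle'' (sink effects near $\partial H$, matching $\deg_H$ with $\deg_\bbT$) is therefore a self-inflicted complication; nothing in the statement requires weak convergence.

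Part (ii) is where you have a genuine gap. Your key equivalence --- ``$o$ topples in wave $i+1$ iff wave $i$ returned a grain to $o$ iff $\bbT_{i-1}\subset W^1(\eta)$'' --- is wrong as stated and not proved. First, a single returning grain does not re-create instability at $o$: after toppling, $o$ is unstable again iff it receives $\deg(o)$ grains back, i.e.\ iff \emph{all} of $\bbT_1$ topples in wave $i$; your version gives (for $i=1$) ``wave $2$ occurs iff $W^1(\eta)\ne\es$'', which is off by one and inconsistent with the formula you then write down. Second, even after correcting this, you still need to relate ``$\bbT_1\subset W^i$'' to ``$\bbT_i\subset W^1$'', and ``by the nested structure'' is not enough: mere nesting $W^{i+1}\subset W^i$ does not control by how much the waves shrink. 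The missing ingredient, which is the heart of the paper's proof, is the exact identification
\[
W^{i+1}(\eta)=\text{component of $o$ in } W^i(\eta)\setminus \partial^{in}_V W^i(\eta),
\]
obtained by noting that a toppled vertex $v\ne o$ returns to height $\deg(v)-1$ iff all of its neighbours toppled, while each $v\in\partial^{in}_V W^i(\eta)\setminus\{o\}$ ends at height at most $\deg(v)-2$. With $K=\max\{k:\bbT_k\subset W^1(\eta)\}$ one then checks that each wave peels off exactly one layer: $\bbT_{\le K-1}\subset W^2(\eta)$ (no vertex of $\bbT_{\le K-1}$ lies in $\partial^{in}_V W^1(\eta)$ since all its neighbours are in $\bbT_{\le K}\subset W^1(\eta)$), while some $v\in\bbT_K$ is lost (a parent of a vertex of $\bbT_{K+1}\setminus W^1(\eta)$). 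Iterating gives $N=K+1$. You should replace the ``grains funnel back'' heuristic by this peeling argument; it also delivers (iii) for free.
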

\begin{proof}
(i)  Call $A$  the connected component of $o$ in 
$\{ v\in \bbT: \eta_v =\deg(v)-1\}.$ Then all of the vertices in $A$
topple in the first wave (and they topple exactly once).  
On the other hand each vertex in
$\partial_V A$ only receives one particle
and hence will not topple.  \\
(ii) After the first wave vertices  other than $o$ 
in $\partial^{in}_V W^1(\eta)$ have at most $\deg(v)-2$ particles 
and hence $W^2(\eta)$ equals the connected component of $o$ in 
$W^1(\eta) \setminus \partial^{in}_V W^1(\eta)$.
Let us call  $K=\max\{k \in \mathbb{N}: \bbT_k \subset W^1(\eta) \}$.
Then $\bbT_{\leq K} \subset W^1(\eta)$ but there exists $v\in \bbT_K$
such that $v \in \partial^{in}_V W^1(\eta)$ and therefore
$\bbT_{\leq K-1} \subset W^2(\eta)$ but $v\notin W^2(\eta)$.
The claim follows now by  repeating this argument for
$\partial^{in}_V W^2(\eta), W^3(\eta)$, etc. up to $W^{\mathrm{last}} (\eta)$.  \\
(iii) This last assertion follows from the arguments in the proof of (ii).
\end{proof}

Recall that $\bbT$ is a fixed realization of a supercritical Galton-Watson tree.
Observe that in the supercritical case, a.s.~on $F$ there exists
a vertex $v^* = v^*(\bbT)$ such that $v^*$ has at least two children
with an infinite line of descent, and $v^*$ is the closest such vertex
to $o$. Hence, in the sequel we may assume
without loss of generality that our sample $\bbT$ is such that $v^*$ exists.

\begin{lemma}
\label{lem:waves2}
For $\nu_\bbT$-a.e.~$\eta$ there is at most one wave with the
property that $v^*$ topples but one of its children does not.
When this happens, we have $N(\eta) \ge |v^*|+1$, and the wave
in question is $W^{N-|v^*|}(\eta)$.
\end{lemma}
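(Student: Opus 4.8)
The plan is to analyze the structure of the waves through the lens of Lemma~\ref{lem:waves}, tracking exactly when and how the distinguished vertex $v^*$ participates in topplings. Recall from Lemma~\ref{lem:waves}(i) that $W^1(\eta)$ is the connected component of $o$ in the set $D(\eta) := \{v \in \bbT : \eta_v = \deg(v)-1\}$, and from parts (ii)--(iii) that the successive waves form a nested decreasing sequence $W^1 \supset W^2 \supset \cdots \supset W^{\last}$, with the later waves obtained from $W^1$ by repeatedly peeling off the internal vertex boundary (away from $o$). Concretely, if $K = \max\{k : \bbT_k \subset W^1(\eta)\}$, then $N(\eta) = K+1$ and, for $1 \le i \le N$, the wave $W^i(\eta)$ contains $\bbT_{\le K-i+1}$ and is contained in $W^1(\eta)$; the vertices of $\bbT_{K-i+2} \cap W^1(\eta)$ that survived into $W^{i-1}$ get shaved off at step $i$ (with the component-of-$o$ restriction). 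The first step is therefore to make precise the claim that a vertex $v$ with $|v| = \ell$ topples in wave $W^i$ iff $i \le N - \ell$ and $v$ lies in the appropriately pruned component of $o$; in particular $v$ topples in every wave $W^1, \dots, W^{N-\ell}$ (if any) and in none after, by the nesting in (iii).

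Next I would use this to pin down the behaviour at $v^*$. Since $v^*$ is at distance $|v^*|$ from $o$, by the previous paragraph $v^*$ topples precisely in waves $W^1, \dots, W^{N-|v^*|}$, provided $N \ge |v^*|+1$ (otherwise $v^*$ never topples and there is nothing to prove). I claim that in each of the waves $W^1, \dots, W^{N - |v^*| - 1}$ — i.e. all but the \emph{last} wave in which $v^*$ topples — every child of $v^*$ also topples. Indeed, if $v^*$ topples in $W^i$ with $i \le N - |v^*| - 1$, then $i \le N - (|v^*|+1) = N - |w|$ for every child $w$ of $v^*$, so by the characterization of which waves a vertex participates in, $w$ topples in $W^i$ as well — using here that $w$ is still in the pruned component of $o$ at stage $i$, which holds because $v^* \in W^i$ is its parent and $w$ lies at depth $|v^*|+1 \le K - i + 1$. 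Hence the only wave in which "$v^*$ topples but some child of $v^*$ does not" can possibly occur is the single wave $W^{N - |v^*|}$, and when this wave is nonempty and $N \ge |v^*|+1$ we do get $N(\eta) \ge |v^*|+1$ as asserted. This gives "at most one such wave" and identifies it as $W^{N-|v^*|}$.

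The remaining point is to argue that the exceptional wave $W^{N-|v^*|}$ really does have the stated property $\nu_\bbT$-a.s. — that it is genuinely the case that $v^*$ topples there while \emph{not all} of its children do, rather than $v^*$ and all its children toppling. This is where one must invoke that $v^*$ has (at least) two children, call them $w_1, w_2$, each with an infinite line of descent. In the last wave $W^{N - |v^*|}$, which equals (the component of $o$ in) the set obtained by removing $\partial^{in}_V W^{N-|v^*|-1}$, the vertex $v^*$ lies at depth $K - (N-|v^*|) + 1 = |v^*|$, i.e. $v^* \in \bbT_{K - (N-|v^*|-1)}$... one checks that $v^* \in \partial^{in}_V W^{N-|v^*|-1}$ would contradict $v^*$ toppling in $W^{N-|v^*|}$; rather it is the children of $v^*$ that sit on the frontier. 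Since $\bbT_{K+1} \not\subset W^1(\eta)$ by maximality of $K$, not all of $\bbT_{|v^*|+1}$ survives, and one argues that at least one child of $v^*$ fails to be in $W^{N-|v^*|}$: the depth-$(K+1)$ level is missing from $W^1$, and the pruning forces the deficiency to propagate back along ancestral lines to some child of $v^*$; with positive $\nu_\bbT$-probability (in fact, for a.e.\ $\eta$ with $N \ge |v^*|+1$, using that $W^1$ is exactly the $D(\eta)$-component of $o$ and $D(\eta)$ is a proper subset a.s.) the shortfall is at the children of $v^*$ precisely because $v^*$ has the two infinite-line-of-descent children $w_1,w_2$ and is the closest such vertex to $o$, so the boundary of $W^1$ "below" $v^*$ is the relevant obstruction.

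\textbf{Main obstacle.} The genuinely delicate step is the last one: showing that in the wave $W^{N-|v^*|}(\eta)$ it is \emph{not} the case that $v^*$ and all its children topple together — i.e.\ that "$v^*$ topples but a child does not" actually occurs in that wave rather than the degenerate alternative where the discrepancy is postponed to a still-later wave or never happens. This requires carefully matching the depth bookkeeping ($K$, $N$, $|v^*|$, the level at which $\bbT_k \not\subset W^1$) with the geometry of $v^*$ as the closest branch point with two infinite descendant lines, and using the $\nu_\bbT$-a.s.\ strictness of $D(\eta) \subsetneq \bbT$. Everything else is a bookkeeping consequence of the nested-waves description in Lemma~\ref{lem:waves}.
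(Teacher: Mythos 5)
Your reduction of ``at most one such wave'' to the nesting structure rests on a claim that is false in general and, where it matters most, circular. You assert that a vertex at depth $\ell$ topples in the waves $W^1,\dots,W^{N-\ell}$ \emph{and in none after}. Only the first half follows from Lemma~\ref{lem:waves}(ii) (which gives $\bbT_{\le N-i}\subset W^i$); the second half is wrong: a vertex whose entire neighbourhood stays inside the wave is never an internal boundary vertex and persists past index $N-\ell$ (a leaf of $\bbT$ at depth $1$ lies in every wave up to $W^N$, and a vertex deep in a finite side branch hanging off the path from $o$ to $v^*$ stays in the wave until its branch is disconnected, at an index unrelated to $N$ minus its depth). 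Applied to $v^*$, the statement ``$v^*$ topples precisely in $W^1,\dots,W^{N-|v^*|}$'' is essentially the conclusion to be proved, so it cannot be invoked as a bookkeeping consequence of Lemma~\ref{lem:waves}; likewise ``if $N\le|v^*|$ then $v^*$ never topples'' is unjustified at this stage. (The ``at most one'' part does have an easy correct proof you do not quite give: in the first wave where $v^*$ topples but a child does not, $v^*$ becomes an internal boundary vertex and is removed from all later waves.)

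The missing ingredient is the one the paper's proof is built on: since each vertex on the path $o=u_0,\dots,u_{|v^*|}=v^*$ has exactly one child with an infinite line of descent, $H_0:=\bbT\setminus\bbT(v^*)$ is finite, and Dhar's burning test forces $\eta(w)=\deg(w)-1$ deterministically for all $w\in H_0$ under $\nu_\bbT$. This is what closes the direction your argument cannot reach: writing $W^\ell$ for the exceptional wave, your (correct) observation that all children of $v^*$ topple in waves of index $\le N-|v^*|-1$ yields only $\ell\ge N-|v^*|$. The reverse inequality $N\ge \ell+|v^*|$ requires showing that after $v^*$ drops out the avalanche persists for exactly $|v^*|$ more waves, peeling back along the path: by induction $u_{|v^*|-k}$ topples in $W^{\ell+k}$ while $u_{|v^*|-k+1}$ does not, for $1\le k\le|v^*|$, whence $N=\ell+|v^*|$. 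That induction uses the deterministic full height on $H_0$ (otherwise a side branch below full height could terminate the avalanche early and $N$ could be smaller than $\ell+|v^*|$). Finally, your ``main obstacle'' paragraph aims to prove that the exceptional wave \emph{exists} $\nu_\bbT$-a.s., which the lemma does not claim --- the statement is conditional (``when this happens'') --- so that effort is misdirected while the genuinely needed step, $\ell\le N-|v^*|$, is left unproved.
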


\begin{proof}
Let $o = u_0, \dots, u_{|v^*|} = v^*$ be the path from $o$ to $v^*$. Then for each
$0 \le k \le |v^*|-1$, the only child of $u_k$ with an infinite line of descent is
$u_{k+1}$. This implies that the graph $H_0 := \bbT \setminus \bbT(v^*)$ is finite.
Consider any finite subtree $H$ of $\bbT$ that contains $\{ v^* \} \cup H_0$.
By the burning test of Dhar \cite{D90,H08}, under $\nu_H$ we have
$\eta(w) = \deg(w)-1$ for all $w \in H_0$. Taking the weak limit,
this also holds under $\nu_\bbT$ (which exists for a.e.~$\bbT$).
It follows from this and  Lemma \ref{lem:waves}  
that either $v^*$ does not topple
in the avalanche (when $\eta(v^*) \le \deg(v^*)-2$), or if $v^*$ topples, then there
is an earliest wave $W^\ell(\eta)$  such that $v^*$ topples in  $W^\ell(\eta)$,
 but one of its
children does not. It follows then by induction that in  $W^{\ell+k}(\eta)$ the vertex
$u_{|v^*|-k}$ topples, but $u_{|v^*|-k+1}$ does not, for $1 \le k \le |v^*|$.
Hence $\ell + |v^*| = N$, and the claim follows.
\end{proof}

In addition to the above lemmas, we will use the following upper bound.
 Let $G^\bbT(x,y) = (\Delta^\bbT)^{-1}(x,y)$, where $\Delta^\bbT$
is the graph Laplacian of $\bbT$. This is the same as the Green's
function of the continuous time simple random walk on $\bbT$ that
crosses each edge at rate $1$. 
\begin{lemma}\label{lemma:SP-WSF}
For $\eta$ sampled from $\nu_{\T}$ and the corresponding $\WSF_o$-measure we have
\[
 \nu_{\T} (W^1(\eta) \in \mathcal{A} ) \leq G^{\bbT}(o,o) \WSF_o(\fF_o \in \mathcal{A})
\]
where $\mathcal{A}$ is a cylinder event.
\end{lemma}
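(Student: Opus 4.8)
The plan is to relate the law of the first wave $W^1(\eta)$ under $\nu_\bbT$ to the law of the component $\fF_o$ under $\WSF_o$ by passing through finite volumes $H$, using the bijection between recurrent sandpile configurations and spanning trees of $\bbT^*_H$ (the burning bijection of Majumdar--Dhar). First I would work in a fixed finite $H$ with $o\in H$ and recall that, under the uniform measure $\nu_H$ on recurrent configurations, the event $\{W^1_H(\eta)=A\}$ for a fixed connected $A\ni o$ is, by Lemma~\ref{lem:waves}(i), the event $\{\eta_v=\deg_H(v)-1 \text{ on } A,\ \eta\ \text{not maximal at the internal boundary sites}\}$ intersected with recurrence; I would then translate this through the burning bijection into a statement about the corresponding spanning tree of $\bbT^*_H$. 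The standard fact (going back to Dhar's burning algorithm and exploited in Wilson's-algorithm analyses of waves, cf.\ \cite{JR08,LMS08}) is that $W^1_H(\eta)=A$ corresponds, on the spanning-tree side, to a prescribed local configuration near the boundary of $A$ together with an arbitrary spanning tree structure inside $A$; summing the uniform weights one obtains an identity, or at worst an inequality, comparing $\nu_H(W^1_H\in\mathcal A)$ with a spanning-tree probability of the component of $o$.

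The key quantitative input is the factor $G^\bbT(o,o)$, which should enter through a change of root / change of boundary condition in the spanning-tree measure. Concretely, on $\bbT^*_H$ the number of recurrent configurations equals $\det\Delta^H$, the number of spanning trees; and the event that toppling propagates to the whole of $A$ (first wave $=A$) carries a weight that, after the burning-bijection translation, equals the probability under the uniform spanning tree of $\bbT^*_H$ that the branch from $o$ stays inside $A$ in a suitable sense. Wilson's algorithm rooted at the sink builds this branch as a loop-erased random walk from $o$; the probability that this loop-erased walk (equivalently, the component of $o$ in $\WSF_o$, where $o$ is glued to $s$) lies in $\mathcal A$ is precisely $\WSF_o(\fF_o\in\mathcal A)$. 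The mismatch between ``the walk reaches the sink'' and ``$o$ is glued to the sink'' is exactly a factor controlled by the return probability / Green's function at $o$: one gets the bound $\nu_\bbT(W^1\in\mathcal A)\le G^\bbT(o,o)\,\WSF_o(\fF_o\in\mathcal A)$ because identifying $o$ with $s$ can only increase the relevant escape probability by the factor $G^\bbT(o,o)=1/\Pr_o[\text{SRW escapes } o]$ (for the rate-$1$ continuous-time walk, $G^\bbT(o,o)$ is the expected number of visits, i.e.\ the reciprocal of the escape probability). I would make this precise in finite volume as an exact identity with $G^{\bbT^*_H}(o,o)$ in place of $G^\bbT(o,o)$, or as an inequality, and then let $H\to\bbT$.

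The final step is the limit $H\to\bbT$: by the construction recalled in Section~\ref{subsec:rec-stat}, $\nu_{H_n}\Rightarrow\nu_\bbT$ and the uniform spanning tree on $G_n$ (with $o$ and $s$ identified) converges weakly to $\WSF_o$, while $G^{\bbT^*_{H_n}}(o,o)\uparrow G^\bbT(o,o)<\infty$ by transience of $\bbT$. Since $\mathcal A$ is a cylinder event, $W^1_H\in\mathcal A$ and $\fF_o\in\mathcal A$ are determined by finitely many vertices/edges, so the weak convergence applies directly and the finite-volume inequality passes to the limit, yielding the claim.

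I expect the main obstacle to be pinning down the exact combinatorial/probabilistic identity in finite volume: namely, showing that the burning bijection sends $\{W^1_H=A\}$ to precisely the spanning-tree event whose Wilson's-algorithm weight is $G^{\bbT^*_H}(o,o)\,\Pr_o[\text{LERW from } o \text{ stays in } A]$, including getting the $G(o,o)$ constant (and not, say, $G(o,o)-1$ or a product over boundary vertices) and checking that the boundary constraints defining a ``first wave equal to $A$'' do not impose extra multiplicative factors beyond what is absorbed into the inequality. Once that identity (or one-sided bound) is in hand, the rest is a routine weak-convergence argument using transience and the cylindrical nature of $\mathcal A$.
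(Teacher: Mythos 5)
Your skeleton --- finite volume, a bijection to spanning structures, then a weak limit over cylinder events --- is the same as the paper's, and the limiting step is fine. The gap is precisely the finite-volume identity that you yourself flag as the ``main obstacle'', and the route you sketch for it does not close. You want to read $\{W^1_H(\eta)=A\}$ through the Majumdar--Dhar burning bijection applied to $\mathrm{R}_H$, as ``a prescribed local configuration near the boundary of $A$ plus an arbitrary tree structure inside $A$''. That is not what the bijection gives: by Lemma \ref{lem:waves}(i) the first wave is the connected component of $o$ in $\{v:\eta_v=\deg_H(v)-1\}$, and the set of maximal-height sites is \emph{not} a local function of the spanning tree under the burning bijection (heights are determined by the global burning order), so $\{W^1_H=A\}$ admits no clean description as a UST event on $\bbT^*_H$, and in particular is not ``the branch from $o$ stays in $A$''. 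Likewise the constant does not arise from an escape-probability comparison inside Wilson's algorithm; note also that with the paper's normalization $G^\bbT(o,o)=\caR(o\conn\infty)=[\deg(o)\,\Pr_o(\text{never return})]^{-1}$, not $\Pr_o(\text{never return})^{-1}$.

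What the paper does instead is to avoid applying the burning bijection to $\mathrm{R}_H$ altogether. It introduces $\overline{\mathrm{R}}_H$, the set of configurations appearing just before a wave (each $\overline{\eta}$ has $\overline{\eta}(o)=\deg_H(o)$), and uses the Ivashkevich--Ktitarev--Priezzhev bijection between $\overline{\mathrm{R}}_H$ and two-component spanning forests of $\bbT^*_H$ separating $o$ from $s$ (equivalently, spanning trees of the graph with $o$ and $s$ identified), under which the set of sites toppling in the wave is exactly the component of $o$; thus the uniform measure on $\overline{\mathrm{R}}_H$ pushes forward to $\WSF_{o,H}$ with the wave becoming $\fF_o$. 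Two counting facts then give the lemma: (i) $\eta\mapsto\eta+\delta_o$ injects $\{\eta\in\mathrm{R}_H:\,W^1_H(\eta)\in\mathcal{A}\}$ into $\{\overline{\eta}\in\overline{\mathrm{R}}_H:\,W_H(\overline{\eta})\in\mathcal{A}\}$, whence $\nu_H(W^1_H\in\mathcal{A})\le \big(|\overline{\mathrm{R}}_H|/|\mathrm{R}_H|\big)\,\WSF_{o,H}(\fF_o\in\mathcal{A})$; and (ii) $|\overline{\mathrm{R}}_H|/|\mathrm{R}_H|=\mathbb{E}_{\nu_H}[N]=G^H(o,o)$ by Dhar's formula, since the number of waves equals the number of topplings at $o$. (Your intuition that the ratio of the two spanning-tree counts equals $G^H(o,o)$ is numerically correct --- it is the effective resistance between $o$ and $s$ --- but without the injection into wave representatives you have no way to compare the two events, which live on different spaces.) The rest is the weak limit you describe.
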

\begin{proof}
We first show the statement in finite volume $H$ and then take the weak limit.
Let $\overline{\mathrm{R}}_H$ be the set of configurations that appear
just before a wave (thus each $\overline{\eta}$ satisfies 
$\overline{\eta}(o) = \deg_H(o)$), and write $W_H(\overline{\eta})$ 
for the set of vertices that topple in the wave represented by $\overline{\eta}$.
By \cite{IKP94} there is a bijection between $\overline{\mathrm{R}}_H$
and 2-component spanning forest on $\bbT^*_H$ such that $o$ and $s$
are in different components. Alternatively these are spanning trees
of the graph  $G$ obtained from $\bbT^*_H$ by identifying $o$ and $s$.
Let us call the uniform spanning tree measure on this finite graph
$\WSF_{o,H}$. We have
\begin{equation*}
\begin{split}
 \nu_H(W_H^1(\eta) \in \mathcal{A}) 
 &= \frac{\left |\{\eta\in \mathrm{R}_H: W_H^1(\eta) \in \mathcal{A} \}\right |}{|\mathrm{R}_H|} \\
 &\leq \frac{|\overline{\mathrm{R}}_H|}{|\mathrm{R}_H|}\cdot 
       \frac{\left| \{ \overline{\eta} \in \overline{\mathrm{R}}_H: 
       W_H(\overline{\eta}) \in \mathcal{A} \} \right|}{|\overline{\mathrm{R}}_H|} \\
 &= \mathbb{E}_{\nu_H}(N) \, \WSF_{o,H}(\fF_o\in \mathcal{A})
\end{split}
\end{equation*}
where the last step follows from the bijection.
By Dhar's formula \cite{D90} and taking the weak limit  $H\to\bbT$
(see Subsection \ref{subsec:rec-stat})  we conclude the claim.
\end{proof}

Occasionally, we will use Wilson's algorithm \cite{W96}, that
provides a way to sample uniform spanning trees in finite graphs,
and as such can be used to sample $\fF_o$ under $\WSF_{o,H}$, as follows.
Enumerate $H \setminus \{ o \}$ as $\{ v_1, \dots, v_{|H|-1} \}$.
Run a loop-erased random walk (LERW) in $\bbT^*_H$ from $v_1$ until it hits
$\{ o, s \}$, which yields a path $\gamma_1$. Then run a LERW from $v_2$ until
it hits $\gamma_1 \cup \{ o, s \}$, yielding a path $\gamma_2$, etc.
The union of all the LERWs is a two component spanning forest with $o$ and $s$
in different components, and the component containing $o$ is distributed as
$\fF_o$. By passing to the limit $H \rightarrow \bbT$ and using transience of $\bbT$,
one obtains the following algorithm to sample $\fF_o$ under $\WSF_o$.
Enumerate $\bbT \setminus \{ o \} = \{ v_1, v_2, \dots \}$. Run a LERW
from $v_1$, stopped if it hits $o$, yielding a path $\gamma_1$. Then run a
LERW from $v_2$, stopped, if it hits $\gamma_1 \cup \{ o \}$, yielding a path
$\gamma_2$, etc. Then the union of the paths that attach to $o$ is distributed
as $\fF_o$ under $\WSF_o$. (Compare \cite[Section 10.1]{LP16} on Wilson's
method rooted at infinity.)
\subsection{Electrical networks and the conductance martingale}\label{subsec:elnet}
\subsubsection{Effective conductances and resistances}\label{subsubsec:effcond}
A general reference for this section is the book \cite{LP16}. Let $G=(V,E)$
be a finite or locally finite infinite graph, for example $\bbT^*_H$ or
$\bbT(v)$. We can regard them as an electrical network where each edge
has conductance (and hence resistance) 1.
An \textit{oriented edge} $e=(e^-,e^+)$ (or $e^{\rightarrow}$)
has a head $e^+$ and a tail $e^-$. The set of oriented edges is denoted
by $E^{\rightarrow}$.
In a finite network, the \textit{effective resistance}
$\caR$ between two sets $A$ and $B$
will be denoted by $\caR(A \leftrightarrow B)$.
The \textit{effective conductance} $\caC$ between $A$ and $B$ is equal to
\begin{equation*}
 \caC(A \leftrightarrow B) = \frac{1}{\caR(A \leftrightarrow B)}.
\end{equation*}
In an infinite network $G$, we will need the effective resistance to infinity $\caR(A \leftrightarrow \infty; \, G)$
and
\begin{equation*}
 \caR(A \leftrightarrow \infty; \,G) = \frac{1}{\caC(A \leftrightarrow \infty; \, G)}.
\end{equation*}
 where $\caC(A \leftrightarrow \infty; \, G)$ denotes the effective
conductance to infinity in $G$. 
 \\

Since we are dealing with trees, we will often be able to
compute resistances and conductances using series and parallel laws.
If $G$ is  a finite network  and ${\bf T}$ is the uniform spanning
tree of $G$ we can write
\begin{equation*}\label{eq:res}
\mathbb{P}(e\in {\bf{T}}) = {\mathcal{R}}(e^-\leftrightarrow e^+)
\end{equation*}
due to Kirchhoff's law  \cite{K47}. For any vertex $v \in \bbT$
denote
\begin{equation}\label{eq:defC}
  \caC(v) 
  := \caC(v\leftrightarrow \infty; \, \bbT(v))
  \le \deg^+(v), 
\end{equation}
where the inequality follows since each edge has unit resistance.

The following lemma is a special case of a computation
in the proof of the martingale property
in \cite[Theorem 6]{M03}. For convenience of the reader,
we give here a short proof based on Wilson's
algorithm, which is possible, since we are dealing with trees.

\begin{lemma}\label{l:cond-alt}
Let $o\in A \subset \bbT$ be connected, $B \subset \partial_V A$ and
$e=(e^-,e^+) \in \partial_E A$ with $e^+\notin B$. Then we have
\[
\WSF_{o}(e^+\in \fF_o | A\subset \fF_o, B\cap \fF_o=\emptyset) = \frac{1}{1+\caC(e^+)}.
\]
\end{lemma}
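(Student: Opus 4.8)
The plan is to compute the conditional probability on the left via Wilson's algorithm rooted at $o$ (which, as the excerpt notes, is available because $\bbT$ is a tree and transient). Since the event $\{A \subset \fF_o,\ B \cap \fF_o = \emptyset\}$ and the target event $\{e^+ \in \fF_o\}$ are all cylinder events depending on the spanning forest near $A$, I would like to run Wilson's algorithm in an order that first reveals whether $A \subset \fF_o$ and $B \cap \fF_o = \emptyset$, and only then decides the status of $e^+$.

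\medskip

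\noindent\textbf{Step 1: Reduce to a conditional LERW question.} Fix the realization and work in a large finite $H \supset A \cup B \cup \{e^+\}$, so that $\fF_o$ under $\WSF_{o,H}$ is sampled by Wilson's algorithm in the graph $G_n$ obtained by identifying $o$ and $s$; at the end I pass to the limit $H \to \bbT$ using transience, exactly as in the proof of Lemma~\ref{lemma:SP-WSF}. Enumerate the vertices so that the vertices of $A$ come first, then $e^+$, then the remaining vertices. Running the LERWs from the vertices of $A$ first determines the event $\{A \subset \fF_o\}$ together with which vertices of $\partial_V A$ are already attached to $\fF_o$; conditioning further on $B \cap \fF_o = \emptyset$ just restricts the realizations of these initial LERWs. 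The key observation is that, because $A$ is a subtree and $\bbT$ is a tree, at the moment all vertices of $A$ have been processed, the component of $o$ equals exactly $A$ together with possibly some vertices of $\partial_V A \setminus B$ hanging off it through single edges of $\partial_E A$; in particular, whether the particular boundary vertex $e^+$ is attached is \emph{not} yet decided by these LERWs (its subtree $\bbT(e^+)$ has not been touched, since the only way into $\bbT(e^+)$ from $A$ is through the edge $e$).

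\medskip

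\noindent\textbf{Step 2: Run the LERW from $e^+$ conditionally.} Now run a LERW from $e^+$; it is stopped when it hits the current tree, which contains $o$ (equivalently $A$) and whatever has been attached so far. The vertex $e^+$ ends up in $\fF_o$ if and only if this LERW, started at $e^+$, reaches $A$ via the edge $e$ \emph{before} escaping to infinity (to the sink, in finite volume). By the tree structure, from $e^+$ the walk either steps ``up'' across $e$ into $A$ — and then it is connected — or it steps into $\bbT(e^+)$ and must return to $e^+$ before it can connect; loop-erasure does not change the connectivity outcome. So the probability in question equals the probability that a network random walk started at $e^+$, with the edge $e$ present, hits $e^-\in A$ before $\infty$ in the subtree $\bbT(e^+) \cup \{e\}$. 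This is a standard effective-resistance computation: the edge $e$ has conductance $1$, in parallel with nothing and in series with the rest, and the ``rest'' seen from $e^+$ looking into $\bbT(e^+)$ has conductance to infinity exactly $\caC(e^+) = \caC(e^+ \leftrightarrow \infty;\ \bbT(e^+))$ by the definition \eqref{eq:defC}. Hence the walk from $e^+$ goes to $A$ (conductance $1$ out of $e^+$) versus to infinity (conductance $\caC(e^+)$ out of $e^+$), giving probability $\tfrac{1}{1+\caC(e^+)}$.

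\medskip

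\noindent\textbf{The main obstacle.} The delicate point is justifying that conditioning on $\{A \subset \fF_o,\ B \cap \fF_o = \emptyset\}$ does not bias the subsequent LERW from $e^+$: one must argue that, conditionally on the outcomes of the LERWs from the vertices of $A$ (including the event that forces $A \subset \fF_o$ and $B \cap \fF_o = \emptyset$), the conditional law of the LERW from $e^+$ is still just the LERW in $\bbT^*_H$ stopped on hitting the currently-built partial tree. This is exactly the Markovian ``grow one branch at a time'' property of Wilson's algorithm, but some care is needed because the conditioning event is about where the \emph{earlier} branches went, not about $\bbT(e^+)$; the tree structure is what makes the two regions independent given the partial tree. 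Once this conditional-independence is cleanly stated, Step~2 is a one-line parallel/series computation, and the passage $H \to \bbT$ is routine given transience of $\bbT$ and the fact that $\mathcal{A}$-type events here are cylinder events. I would also double-check the edge case $e^+ \in \partial_V A \setminus B$ already attached ``from the other side'': this cannot happen here precisely because $\bbT$ is a tree, so $e$ is the unique edge joining $\bbT(e^+)$ to the rest, which is why the hypothesis that we are on a tree is essential.
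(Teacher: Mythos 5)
Your proposal is correct and follows essentially the same route as the paper: both sample $\fF_o$ under $\WSF_{o,H}$ via Wilson's algorithm with the vertices of $A$ processed first, reduce the conditional probability to the event that a simple random walk started at $e^+$ hits $e^-$ before the sink, evaluate this as $[1+\caC(e^+ \leftrightarrow s;\, \bbT^*_H(e^+))]^{-1}$ by the series/parallel law, and then let $H \to \bbT$. The paper's proof is just a few lines and leaves implicit the conditional-independence point you flag as the main obstacle; as you note, it is supplied exactly by the tree structure, since each subtree hanging off $\partial_V A$ communicates with $A$ through a single edge, so the earlier branches cannot enter $\bbT(e^+)$.
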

\begin{proof}
Take $H$ large enough such that $A\cup B \cup \{e^+\} \subset H$
and let $G$ be the graph obtained from $\bbT^*_H$ by identifying
$o$ and $s$. Let $\bbT^*_H(e^+)$ be the subgraph
of $\bbT^*_H$ induced by the vertices in $(\bbT(e^+) \cap H) \cup \{s\}$.
Using Wilson's algorithm to sample $\WSF_{o,H}$, we have that
$\WSF_{o,H}(e^+\in \fF_o | A \subset \fF_o, B\cap \fF_o=\emptyset)$
equals the probability that a simple random walk in $\bbT^*_H$
started at $e^+$ hits $e^-$ before hitting $s$.
This equals $[1+\caC(e^+ \leftrightarrow s; \bbT^*_H(e^+))]^{-1}$,
and letting $H$ go to $\bbT$ we obtain the result.
\end{proof}
\subsubsection{The conductance martingale}\label{subs:cond-mart}
Let us fix an environment $\bbT$, and let $\mathfrak{F}$
denote a sample from the measure $\WSF_o$ defined on the
graph $\bbT$. Recall $\fF_o$ is the connected component
of $o$ in $\fF$.

We inductively construct a random increasing sequence
$E_0 \subset E_1 \subset E_2 \subset \dots$
of edges. Put $E_0 = \es$. Assuming $n \ge 0$ and that
$E_n$ has been defined, let $S_n$ be the set of vertices
in the connected component of $o$ in $E_n \cap \mathfrak{F}$
(we have $S_0 = \{ o \}$). Let us call all edges in
$\mathbb{T} \setminus E_n$ that are incident to $S_n$
\emph{active at time $n$}, and let us denote by $\cA_n$ the event
that this set of active edges is empty. On the event $\cA_n$,
that is, when all edges in $\mathbb{T}$ incident to
$S_n$ belong to $E_n$, we set $E_{n+1} = E_n$.
On the event $\cA_n^c$, we select an active edge
$e_{n+1}$, and we set $E_{n+1} = E_n \cup \{ e_{n+1} \}$.
(Note: at this point we have not yet specified how we select
an active edge. In some cases this will not matter, in some
other cases we will make a more specific choice later, 
 see Section \ref{sec:wave-ub}). Note that 
the event $\{ |\fF_o| < \infty \}$ equals
$\bigcup_{n \geq 1} \cA_n$. Let
\begin{equation*}\label{def:mart}
 M_n  := \mathcal{C} (S_n \conn \infty ; \, \mathbb{T} \setminus E_n).
  \end{equation*}
Let $\mathcal{F}_n$ denote the $\sigma$-field
generated by $E_n$ and $E_n \cap \fF$. By
a result of Morris (see  \cite[Theorem 8]{M03}
and \cite[Lemma 3.3]{LMS08}) $M_n$ is an
$\mathcal{F}_n$-martingale.

Since we are dealing with trees, the increments of $M_n$ can be expressed
very simply. Let $\cC_n := \cC(e^+_{n+1})$ (cf. \eqref{eq:defC})
and recall that this is the conductance
from $e^+_{n+1}$ to infinity in the subtree  $\bbT(e^+_{n+1})$.
 Then by Lemma \ref{l:cond-alt} the probability, given $\cF_n$, that $e_{n+1}$
belongs to $\fF_o$ equals $(1 + \cC_n)^{-1}$. On this event,
we have
\eqnst
{ M_{n+1} - M_n
  = - \frac{1}{1 + \frac{1}{\cC_n}} + \cC_n
  = - \frac{\cC_n}{1 + \cC_n} + \cC_n
  = \frac{\cC_n^2}{1 + \cC_n}. }
Here the negative term is the conductance from $e^-_{n+1}$ to infinity
via the edge $e_{n+1}$. 

This implies that conditionally on $\cF_n$ we have
\[
 M_{n+1} - M_n
  = \begin{cases}
    \displaystyle{\frac{\cC_n^2}{1 + \cC_n}} & 
    \text{with probability $\displaystyle{\frac{1}{1 + \cC_n}}$;} \\ \\
   \displaystyle{ -\frac{\cC_n}{1 + \cC_n}} & 
   \text{with probability $\displaystyle{\frac{\cC_n}{1 + \cC_n}}$.}
    \end{cases} 
\]
Let
\eqn{e:Di}
{ D_i
  = \ET  \big[ M_{i+1}^2 - M_i^2 \,\big|\, \cF_i \big]
  = \cC_i \, \frac{\cC_i^2}{(1 + \cC_i)^2}. }
We will use the short notation $\ProbT$ instead of $\WSF_o$ 
from now on and denote $\ET$ the associated expectation.

\section{Upper bound on waves}\label{sec:wave-ub}
In this section we give upper bounds on waves for general offspring
distributions, conditioning the environment on the event $F$ (cf. \eqref{eq:F-Tsurvives}). 

Let $\mathbb{T}'$ denote the subtree of $\mathbb{T}$ consisting 
of those vertices $v$ such that $\mathbb{T}(v)$ is infinite. 
We will write 
\begin{equation}\label{eq:defoverC}
\overline{\cC}(v) := \max \{ \cC(v),\, 1 \}.
\end{equation} 
Recall the random variable $N_1(\bbT)$ from \eqref{e:CP}.
\begin{theorem}
\label{thm:noK}
Suppose that $1<\sum_{k \ge 0} k p_k \leq \infty$.
There exist $C_1 = C_1(\bp)$ and $t_0 = t_0 (\bp)$ 
such that on the event of survival we have
\eqnst
{ \probT \big[ |\fF_o| > t \big]
  \le C_1 \, \cC(o) \, t^{-1/2}, \quad t \ge \max \{ t_0(\bp),\, N_1(\bbT) \}. }
Therefore,
\[ 
\probT \big[ |\fF_o| > t \big]
  \le C_1 \, N_1^{1/2} \, \overline{\cC}(o) \, t^{-1/2},
  \quad t > 0. 
\]

\end{theorem}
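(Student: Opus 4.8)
The plan is to exploit the conductance martingale $(M_n)$ built in Section \ref{subs:cond-mart}, started from $M_0 = \cC(o)$. The idea is that $|\fF_o|$ large forces the exploration to run for many steps, and along the way the martingale accumulates quadratic variation $\sum_i D_i$ that must stay bounded because $M_n \ge 0$ and $M_0 = \cC(o)$; hence $\E^\bbT[\sum_i D_i] \le M_0 = \cC(o)$ (optional stopping, using $M_n \ge 0$ to control the martingale and Fatou). The point is to convert the volume bound $|\fF_o| > t$ into a lower bound on the number of exploration steps, and then into a lower bound on $\sum_i D_i$, so that Markov's inequality on $\sum_i D_i$ gives the $t^{-1/2}$ decay. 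Concretely, I would: (1) observe that if $|\fF_o| > t$ then the exploration reveals at least $t$ edges of $\fF_o$ before it terminates, so the number $\tau$ of steps $i$ with $e_{i+1}\in\fF_o$ is at least $t$; (2) use the anchored isoperimetric inequality \eqref{e:CP} — valid once the revealed component has size at least $N_1(\bbT)$ — to say that at all but the first $N_1$ such steps there are at least $\delta_0 |S_i|$ active edges, which after restricting to $\bbT'$ forces many of the explored edges $e_{i+1}$ to have $e^+_{i+1} \in \bbT'$, hence $\cC_i = \cC(e^+_{i+1}) > 0$ bounded below; (3) bound $D_i = \cC_i^3/(1+\cC_i)^2$ from below by a constant on those steps, so $\sum_i D_i \gtrsim (t - N_1)$ on the event $\{|\fF_o|>t\}$; (4) conclude
\eqnst
{ \cC(o) \ge \ET\Big[\sum_i D_i\Big] \ge \ET\Big[\sum_i D_i \, ; \, |\fF_o|>t\Big] \gtrsim (t-N_1)\, \probT[|\fF_o|>t], }
which, for $t \ge 2N_1(\bbT)$ say, gives $\probT[|\fF_o|>t] \le C_1 \cC(o) t^{-1}$ — too strong to be true, so the real argument must be more delicate.

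The subtlety, and where I expect the main obstacle, is that step (2)–(3) as stated is too optimistic: the conductances $\cC_i$ attached to explored edges need not be bounded away from $0$, and $D_i$ can be tiny. The correct mechanism is the one familiar from \cite{LMS08}: one must track not the number of explored edges but a \emph{stopping time} $T$ at which either the component has size exceeding $t$ or enough quadratic variation has accumulated, and use that a nonnegative martingale with $M_0 = \cC(o)$ and quadratic variation $\ge V$ up to time $T$ satisfies a square-function / $L^1$ estimate forcing $\probT[\text{variation} \ge V] \le \cC(o)/\sqrt{V}$ — this is the $t^{-1/2}$ (not $t^{-1}$) scaling. So the refined plan is: (1') choose the selection rule for active edges (promised in Section \ref{sec:wave-ub}) so as to explore $\bbT'$ efficiently — e.g. always pick an active edge leading into $\bbT'$ when one exists; (2') show, using \eqref{e:CP} applied to the component $S_n$ inside $\bbT'$, that once $|S_n| \ge N_1$ the set of active edges into $\bbT'$ is nonempty and in fact of size $\gtrsim |S_n|$, so the probability that the \emph{next} explored edge lies in $\fF_o$ is $(1+\cC_n)^{-1}$ with $\cC_n$ controlled, and crucially that among the first $t$ explored edges of $\fF_o$, a positive fraction carry $\cC_i \ge c > 0$; (3') deduce $\sum_{i < \sigma_t} D_i \ge c'(t - N_1)$ on $\{|\fF_o| > t\}$ where $\sigma_t$ is the step at which the $t$-th edge of $\fF_o$ is revealed; (4') but then, rather than plugging into $\ET[\sum D_i] \le \cC(o)$ directly, apply it to the \emph{stopped} martingale $M_{n\wedge\sigma_t}$: $\ET[(M_{\sigma_t}-M_0)^2] = \ET[\sum_{i<\sigma_t} D_i]$, and combine with $M_{\sigma_t}\ge 0$, $M_0 = \cC(o)$, via the inequality $\ET[\sum_{i<\sigma_t}D_i \,;\, |\fF_o|>t]^{1/2}\cdot(\text{something})$ — more precisely use that on $\{|\fF_o|>t\}$ the variation is $\ge c'(t-N_1)$, and on this event the martingale has not been absorbed at $0$, so a maximal-inequality argument (à la \cite[proof of Lemma 3.3 and Theorem 1.1]{LMS08}) yields $\probT[|\fF_o|>t] \le C \cC(o)/\sqrt{c'(t-N_1)}$.

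For the final displayed inequality of the theorem, I would simply patch the range $0 < t < \max\{t_0(\bp), N_1(\bbT)\}$: there the first bound is vacuous, but trivially $\probT[|\fF_o|>t] \le 1$, and since $t \le N_1$ we have $1 = N_1^{1/2} N_1^{-1/2} \le N_1^{1/2} t^{-1/2}$; using $\overline{\cC}(o) = \max\{\cC(o),1\}\ge 1$ this gives $\probT[|\fF_o|>t] \le N_1^{1/2}\overline{\cC}(o) t^{-1/2}$. For $t \ge \max\{t_0,N_1\}$ the first bound gives $C_1 \cC(o) t^{-1/2} \le C_1 \overline{\cC}(o) t^{-1/2} \le C_1 N_1^{1/2}\overline{\cC}(o) t^{-1/2}$ since $N_1 \ge 1$. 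Taking the larger of the two constants covers all $t > 0$. The genuine work is entirely in establishing the first inequality with the correct $t^{-1/2}$ power; I expect the hardest point to be showing that a positive fraction of explored tree-edges carry conductance bounded below — i.e. turning the \emph{edge}-isoperimetry \eqref{e:CP} into a statement about the $\cC(e^+_{i+1})$ being non-negligible — and cleanly handling the $N_1(\bbT)$ burn-in so that it appears only as the multiplicative $N_1^{1/2}$ factor.
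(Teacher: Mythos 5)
Your overall architecture is the right one and matches the paper's: control $\ET[\sum_i D_i]$ by roughly $\cC(o)\,t^{1/2}$ using nonnegativity of the martingale, show that on $\{|\fF_o|>t\}$ the accumulated predictable variation is $\gtrsim t$, and conclude by Markov; your patch extending the bound to all $t>0$ via the trivial bound and $N_1^{1/2}t^{-1/2}\ge 1$ is also essentially what the paper does. However, the central quantitative step --- your (2$'$)--(3$'$), the claim that $\sum_{i}D_i\ge c(t-N_1)$ on $\{|\fF_o|>t\}$ --- is left unproved, and the mechanism you propose for it does not work. You want a positive fraction of the \emph{explored tree edges} of $\fF_o$ (those leading into $\bbT'$) to carry $\cC_i\ge c>0$. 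But membership in $\bbT'$ only gives $\cC(v)>0$, not a uniform lower bound (a vertex of $\bbT'$ may head a long unary stretch, making $\cC(v)$ arbitrarily small), and more importantly the good conductances are not to be found among the tree edges of $\fF_o$ at all: since $\fF_o$ swallows every finite bush attached to it (Lemma \ref{lem:decomF}), an arbitrarily large fraction of its edges can have $\cC(e^+)=0$. The paper's Proposition \ref{prop:x0-good} locates the variation on the \emph{boundary} edges instead: anchored isoperimetry \eqref{e:CP} gives a lower bound on the isoperimetric profile, \cite[Theorem 6.41]{LP16} converts this into $\caC(A\conn\infty)\ge c\,|\partial_E A|$, and the identity $\caC(A\conn\infty)=\sum_{e\in\partial_E A}\cC(e^+)/(1+\cC(e^+))$, whose summands are each at most $1$, forces a $\delta_1$-fraction of the boundary edges of $A=\fF_o$ to satisfy $\cC(e^+)/(1+\cC(e^+))\ge\delta_1$, hence $D_i\ge\delta_1^3$ at the steps where they are examined. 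Combined with $|\partial_E\fF_o|\ge\delta_0(|\fF_o|-1)$ this is exactly the missing $\sum_i D_i\gtrsim t$. Note this argument works directly in $\bbT$ with no detour through $\bbT'$ and no moment assumptions, which matters since Theorem \ref{thm:noK} assumes only supercriticality.

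A second, more technical gap: your step (4$'$) invokes a weak-type square-function inequality as a black box, but for the \emph{conditional} square function $\sum_i D_i$ such an inequality is sensitive to large individual increments, and here increments are unbounded (an active edge with huge $\cC(e^+)$ produces a huge upward jump). The paper handles this by hand: the edge-selection rule (examine low-increment edges first), the stopping time $\sigma$ guaranteeing $M_\sigma\le t^{1/2}$, Doob's inequality for $\{\sup_n M_n\ge\tfrac14 t^{1/2}\}$, and a separate count (at most $\tfrac12 t^{1/2}$) of good boundary edges examined after $\sigma$. This overshoot control is precisely where the promised selection rule of Section \ref{sec:wave-ub} is used, and your proposal does not address it. So the plan identifies the right targets but does not supply the two ideas that make the proof work: the isoperimetric-profile-to-conductance conversion on the boundary, and the stopping-time/selection-rule control of the overshoot.
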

We will use the following stopping times:
\eqnsplst
{ \tau^-
  &= \inf \{ n \ge 0 : M_n = 0 \} \\
  \tau_{b,t}
  & = \inf \{ n \ge 0 : M_n \ge b t^{1/2} \}, \quad b > 0,\, t > 0. }
We impose the following restriction on selecting edges to
examine for the martingale. If there is an active
edge $e$ available with
$\cC(e^+)^2 / (1 + \cC(e^+)) < (1/2) t^{1/2}$, we select one
such edge to examine, otherwise we select any other edge.

Observe that on the event $F$, we have $M_0 > 0$ 
 (recall that $M_0=\cC(o)$),  and Doob's inequality gives
\eqnst
{ \probT [ \tau_{1/4,t} < \tau^- ]
   \le \probT \left[ \sup_{n} M_n \ge \frac{1}{4} t^{1/2} \right]
  \le 4 M_0 t^{-1/2}. }
  Moreover, as long as $n<\tau^-$, we have $M_n > 0$. 
Consider the stopping time
\eqnst
{ \sigma
  = \tau_{1/4, t} \wedge \inf \left\{ n \ge 0 :
    \frac{\cC(e^+)^2}{1 + \cC(e^+)} \ge \frac{1}{2} t^{1/2} \text{ for all
    active $e$ at time $n$} \right\}. }
When there are no active edges at all, that is, at time $\tau^-$,
the condition on them holds vacuously, and hence
$\sigma \le \tau^- \wedge \tau_{1/4, t}$.

\begin{lemma}
\label{claim}
On the event $\{ \sigma < \tau^- \}$, 

(i) we have $M_\sigma \le t^{1/2}$;

(ii) we either
have the event $\{ \tau_{1/4, t} < \tau^- \}$ or else
no edges are added to the cluster after time $\sigma$,
that is:
$\fF_\sigma = \fF_n = \fF_{\tau^-}$ for all
$\sigma \le n \le \tau^-$. 
\end{lemma}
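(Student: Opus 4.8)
The plan is to analyze what happens at the stopping time $\sigma$ by splitting on which of the two events in the definition of $\sigma$ occurred, using the restriction we imposed on edge selection. First I would prove part (i). On $\{\sigma < \tau^-\}$ we have $M_n > 0$ for all $n \le \sigma$, so in particular the martingale has not yet been absorbed at $0$. If $\sigma = \tau_{1/4,t}$, then by definition of $\tau_{1/4,t}$ we know $M_{\sigma - 1} < \tfrac14 t^{1/2}$, and the single increment at step $\sigma$ is at most $\cC_{\sigma-1}^2/(1+\cC_{\sigma-1})$; by the edge-selection rule, as long as there was an admissible active edge available we chose one with increment $< \tfrac12 t^{1/2}$, so $M_\sigma \le \tfrac14 t^{1/2} + \tfrac12 t^{1/2} \le t^{1/2}$. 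I need to check the rule does apply at that step, i.e. that an admissible edge was indeed available; this is exactly where the second branch of $\sigma$ comes in — if no admissible active edge were available, then $\sigma$ would have been triggered by the second event at a strictly earlier time, contradicting $\sigma = \tau_{1/4,t}$ (or they coincide, in which case we are in the other case below). If instead $\sigma$ is the second stopping time (all active edges $e$ at time $\sigma$ satisfy $\cC(e^+)^2/(1+\cC(e^+)) \ge \tfrac12 t^{1/2}$), then $\sigma < \tau_{1/4,t}$ by minimality considerations, hence $M_\sigma \le M_{\sigma} < \tfrac14 t^{1/2} \le t^{1/2}$; more carefully, at time $\sigma$ no further increment has been added beyond what was there, and $M_\sigma$ is measured \emph{before} examining a new edge, so $M_\sigma = M_{\sigma}$ is still below $\tfrac14 t^{1/2}$ unless $\sigma=\tau_{1/4,t}$. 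Either way $M_\sigma \le t^{1/2}$.

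Next, for part (ii), suppose we are not on the event $\{\tau_{1/4,t} < \tau^-\}$. Then $\sigma \neq \tau_{1/4,t}$ (since $\sigma < \tau^-$ is assumed and $\sigma \le \tau_{1/4,t} \wedge \tau^-$), so $\sigma$ must be the second stopping time: every active edge $e$ at time $\sigma$ has $\cC(e^+)^2/(1+\cC(e^+)) \ge \tfrac12 t^{1/2}$. The key observation is that whether such an edge $e$ belongs to $\fF_o$ is decided, conditionally, with probability $(1+\cC(e^+))^{-1}$ of being in, by Lemma~\ref{l:cond-alt}; and if $e$ \emph{is} added to the cluster, the martingale jumps up by $\cC(e^+)^2/(1+\cC(e^+)) \ge \tfrac12 t^{1/2}$, forcing $M_{\sigma+1} \ge \tfrac12 t^{1/2} > \tfrac14 t^{1/2}$, i.e. $\tau_{1/4,t} \le \sigma + 1 \le \tau^-$, which would put us on $\{\tau_{1/4,t} < \tau^-\}$ (using that $M$ stays positive until $\tau^-$, and that the jump happens strictly before absorption). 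Wait — I should be careful: $\tau_{1/4,t} < \tau^-$ requires the strict inequality, but if $e$ is added then $M_{\sigma+1} \ge \tfrac12 t^{1/2}$ and the cluster is still growing so $\tau^- > \sigma+1$, giving $\tau_{1/4,t} \le \sigma+1 < \tau^-$, contradiction. Hence none of the active edges at time $\sigma$ can be in $\fF_o$. But the only edges that can ever be added to $\fF_\sigma$ later are descendants reachable through currently-active edges; since no active edge is in $\fF_o$, the component of $o$ can never grow past $\fF_\sigma$. I would make this precise by induction on $n \ge \sigma$: the set of active edges at time $n$ is contained among edges no longer incident to $\fF_o$-vertices once the active edges at $\sigma$ are all excluded, so $S_n = S_\sigma$ and $\fF_n = \fF_\sigma$ for all $\sigma \le n \le \tau^-$; in particular $\fF_{\tau^-} = \fF_\sigma$.

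The main obstacle is the second part, specifically making rigorous the claim that ``no active edge at time $\sigma$ lies in $\fF_o$ implies the cluster never grows again.'' This requires being careful about the fact that the martingale construction examines edges one at a time, and that rejected active edges near $S_\sigma$ could in principle be re-examined or that new active edges could appear — but by definition an edge incident to $S_n$ that is examined and found not to be in $\fF_o$ removes that edge from future consideration without enlarging $S_n$, and no new vertices enter $S_n$, so no genuinely new active edges are created. I would phrase this as: on $\{\sigma < \tau^-\} \setminus \{\tau_{1/4,t} < \tau^-\}$, every active edge at every time $n \in [\sigma, \tau^-)$ has head not in $\fF_o$ (inductively, using that the active edges at time $n+1$ are a subset of those at time $n$ minus the examined one, together with edges incident to newly added vertices — of which there are none), whence $S_{\tau^-} = S_\sigma$, i.e. $\fF_{\tau^-} = \fF_\sigma = \fF_n$ for all such $n$. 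The positivity $M_n > 0$ for $n < \tau^-$, already noted in the text, is what lets us conclude $\tau_{1/4,t} < \tau^-$ (rather than $\le$) in the contradiction step.
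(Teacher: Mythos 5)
Your proposal is correct and follows essentially the same argument as the paper: for (i) you reduce to the case $M_\sigma\ge\frac14 t^{1/2}$ and bound the last increment by $\frac12 t^{1/2}$ via the edge-selection rule (which must have been applicable at time $\sigma-1$ since the second branch of $\sigma$ had not yet triggered), and for (ii) you argue by contradiction that accepting any active edge at time $\sigma$ would force a jump of size at least $\frac12 t^{1/2}$ and hence the event $\{\tau_{1/4,t}<\tau^-\}$. Your extra care about why no new active edges appear once none of the edges at time $\sigma$ is accepted is a detail the paper leaves implicit, but it is the same proof.
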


\begin{proof}
(i) The claim amounts to showing that
when $M_\sigma \ge \frac{1}{4} t^{1/2}$, we have
$M_\sigma \le t^{1/2}$  (if $M_\sigma \le \frac{1}{4} t^{1/2}$, 
then $M_\sigma \le t^{1/2}$). 
Let $e$ be the edge examined 
at time $\sigma-1$. Then
\eqnst
{ M_\sigma
  \le M_{\sigma-1} + \frac{\cC(e^+)^2}{1 + \cC(e^+)}
  \le \frac{1}{4} t^{1/2} + \frac{1}{2} t^{1/2}
  < t^{1/2}. }
  
(ii) Let us assume that $M_\sigma < \frac{1}{4} t^{1/2}$ (otherwise
the event $\{ \tau_{1/4, t} < \tau^- \}$ has occurred).
Let $e_1, \dots, e_\ell$ be the available edges at time
$\sigma$.
Examine each of the edges $e_1, \dots, e_\ell$ in turn, to determine
whether they belong to $\fF_o$ or not. Suppose that for
some $1 \le j \le \ell$ we have that
$e_j$ is found to belong to $\fF_o$, and let $j$ be the minimal such
index. Then  (recall the definition of $\sigma$) 
\eqnst
{ M_{\sigma+j}
  = M_{\sigma+j-1} + \frac{\cC(e_j^+)^2}{1 + \cC(e_j^+)}
  > \frac{\cC(e_j^+)^2}{1 + \cC(e_j^+)}
  \ge \frac{1}{2} t^{1/2}
  > \frac{1}{4} t^{1/2}. }
Thus the event $\{ \tau_{1/4, t} < \tau^- \}$ occurs.
This proves our claim.
\end{proof}

We have
\eqnsplst
{ M_0^2
  = \ET \left[ M_{\sigma}^2 \, \mathbf{1}_{\sigma < \tau^-} \right]
    - \ET \left[ \sum_{i=0}^{\sigma-1} D_i \right]. }
Here, due to  Lemma \ref{claim}(i),  the first term is bounded above by
\eqnst
{\ET \left[ M_{\sigma}^2 \, \mathbf{1}_{\sigma < \tau^-} \right] \leq  t^{1/2} \ET \left[ M_{\sigma} \, \mathbf{1}_{\sigma < \tau^-} \right]
  = t^{1/2} M_0, }
and hence
\eqnspl{e:D-exp-bnd}
{ \ET \left[ \sum_{i=0}^{\sigma-1} D_i \right]
  \le M_0 t^{1/2}. }

The idea is to show that there cannot be many active edges at time
$\sigma$ from which the conductance is low, and hence there are
sufficiently many terms $D_i$ such that $D_i > c$ for some $c > 0$.

Recall the anchored isoperimetry equation \eqref{e:CP} and exponential bound \eqref{e:CP-tail}.
The following proposition gives a bound on the probability of there
being \emph{any} connected subset of the Galton-Watson tree
that has `many' boundary edges with low conductance to infinity.
Let $o \in A \subset \bbT$ be a  finite  connected set of vertices
such that $|A| = n$. Let us call $e \in \partial_E A$ is
\emph{$\delta$-good} if $\cC(e^+)/(1 + \cC(e^+)) \ge \delta$.
Let us say that \emph{$A$ is $\delta$-good}, if
\eqnsplst
{ \big| \left\{ e \in \partial_E A :
  \text{$e$ is $\delta$-good} \right\} \big|
  \ge \delta \left| \partial_E A \right|. }

We are going to need the \textit{isoperimetric profile function}
(see \cite[Section 6.8]{LP16}) given by:
\eqn{e:psiAt}
{ \psi(A,t)
  := \inf \left\{ | \partial_E K | : A \subset K,\, \text{$K/A$ connected},\,
     t \le |K|_{\deg} < \infty \right\}, }
where $|K|_{\deg} = \sum_{v \in K} \deg(v)$.

\begin{proposition}
\label{prop:x0-good}
Assume $1 < \sum_{k \ge 0} k p_k \leq \infty$.
There exists $\delta_1 = \delta_1(\bp) > 0$ such that
all  finite  connected sets $A$ with
     $o \in A \subset \bbT$ and $|A| \ge N_1$ are $\delta_1$-good.
\end{proposition}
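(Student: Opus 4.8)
The plan is to prove the contrapositive in a uniform way: show that with high probability (indeed, for all $n$ with $n\ge N_1$ simultaneously, which is exactly what the statement asserts since $N_1$ is $\bbT$-measurable and a.s. finite on $F$) \emph{no} finite connected $o\in A\subset\bbT$ with $|A|\ge N_1$ fails to be $\delta_1$-good. The mechanism linking ``failure of $\delta_1$-goodness'' to a rare event is the following: if $A$ has more than $\delta_1|\partial_E A|$ boundary edges $e$ that are \emph{not} $\delta_1$-good, then for each such $e$ the forward conductance satisfies $\cC(e^+)/(1+\cC(e^+))<\delta_1$, i.e. $\cC(e^+)<\delta_1/(1-\delta_1)=:\epsilon(\delta_1)$, which is small when $\delta_1$ is small. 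But $\cC(e^+)=\cC(e^+\conn\infty;\bbT(e^+))$, and on a supercritical Galton–Watson tree the conductance to infinity of the subtree hanging below a vertex that \emph{has} an infinite line of descent is bounded below in distribution; vertices with only finite descent contribute $\cC=0$ but there cannot be too many of them adjacent to a large connected set, again by the anchored isoperimetry/exponential-growth input. So ``many low-conductance boundary edges'' forces either many subtrees with atypically small conductance, or many boundary vertices with no infinite line of descent — both are exponentially unlikely in $|\partial_E A|$, hence (by \eqref{e:CP}) in $|A|$.

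The key steps, in order, are as follows. First, fix the relation $\epsilon=\epsilon(\delta_1)$ above and record that $e$ is not $\delta_1$-good iff $\cC(e^+)<\epsilon$. Second, obtain a uniform tail estimate: there exists $q=q(\bp,\epsilon)<1$ with $q\to 0$ as $\epsilon\to 0$, such that for every vertex $v$, $\Prob[\cC(v)<\epsilon\mid \bbT(v)\text{ infinite}]\le q$, and moreover these events are conditionally independent across the (disjoint) subtrees $\bbT(e^+)$ for $e^+$ ranging over the distinct boundary vertices of a fixed $A$. (Conductance to infinity of a Galton–Watson subtree is a function only of that subtree; distinct forward-boundary vertices of a connected $A$ root disjoint subtrees.) For the tail bound one uses the standard recursive/self-similar structure: $\cC(v)$ is a function of $\deg^+(v)$ and the $\cC$ of its children via parallel and series laws, $\cC(v)=\sum_{w\text{ child}} \cC(w)/(1+\cC(w))$; since the tree is supercritical, with positive probability $v$ has a child with infinite descent whose own conductance is not tiny, giving a crude but sufficient bound on $q$ that we can make as small as we like by shrinking $\epsilon$ (equivalently $\delta_1$). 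Third, a union bound / first-moment computation over all connected $A\ni o$: the number of connected subsets of $\bbT$ containing $o$ with $|A|=n$ and with a prescribed edge boundary of size $m$ is at most (number of such $A$) which we control crudely by $\prod$ of degrees or, more simply, by noting that on $F$ the tree has at most exponential growth so the count is at most $e^{Cn}$ for some $C=C(\bp)$ (this can be arranged on an event of probability $1$ after enlarging $N_1$; or one works conditionally and absorbs it into $c_1$). Fourth, combine: conditionally on $\bbT_{<k}$ for $k$ the depth of $A$, the probability that a \emph{given} $A$ with $|\partial_E A|=m$ has at least $\delta_1 m$ non-$\delta_1$-good boundary edges is at most $\binom{m}{\lceil\delta_1 m\rceil}q^{\lceil\delta_1 m\rceil}\le (2q^{\delta_1})^{m}$ (after also accounting, with another small-probability factor, for boundary vertices lacking infinite descent — their number among $\partial_V A$ is itself exponentially unlikely to exceed $\delta_1 m/2$, by the same supercriticality). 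Using \eqref{e:CP}, $m=|\partial_E A|\ge\delta_0 n$ whenever $n\ge N_1$, so this probability is at most $(2q^{\delta_1})^{\delta_0 n}$; choosing $\delta_1$ (hence $q$) small makes $2q^{\delta_1}<e^{-C}$, so the union bound over all $A$ with $|A|=n$ gives a summable-in-$n$ bound, and Borel–Cantelli yields that a.s. only finitely many ``bad'' $A$ exist. Redefining $N_1$ (still a.s. finite on $F$, still with the exponential tail \eqref{e:CP-tail} after a harmless constant change) to be larger than $|A|$ for all those finitely many bad $A$ gives the claim exactly as stated.

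The main obstacle is the uniform conductance tail bound together with its (conditional) independence structure — in particular making precise that as $\epsilon\downarrow 0$ the quantity $q(\bp,\epsilon)=\sup_v\Prob[\cC(v)<\epsilon\mid\bbT(v)\text{ infinite}]$ tends to $0$, uniformly in $v$, and fast enough that $q^{\delta_1}$ beats the entropy factor $e^{C}$ from counting connected subsets. The uniformity over $v$ is free because, conditioned on having an infinite line of descent, $\bbT(v)$ is a fixed (size-biased-at-the-root) Galton–Watson tree independent of everything above $v$; the quantitative part comes from the RDE $\cC\overset{d}{=}\sum_{i=1}^{D^+}\cC_i/(1+\cC_i)$, which shows $\cC(v)<\epsilon$ forces \emph{all} surviving children to have small conductance, giving $q(\epsilon)\le (\text{const})\,q(\epsilon')^{(\text{mean number of surviving children})}$-type contraction that we iterate to drive $q$ below any threshold. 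A secondary, more routine obstacle is bookkeeping the ``boundary vertices with finite descent'' contribution so that it is also exponentially small in $|\partial_E A|$; this again follows from supercriticality (each child independently has infinite descent with probability bounded below, the offspring tail is controlled by the mean condition) plus a Chernoff bound, and the two exponentially-small contributions are simply multiplied after conditioning appropriately on the generation structure of $\bbT$.
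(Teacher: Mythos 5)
Your route is genuinely different from the paper's: the paper's argument is deterministic given the anchored isoperimetry \eqref{e:CP} --- it feeds the profile bound $\psi(A,t)\ge ct$ into \cite[Theorem 6.41]{LP16} to get $\caC(A\conn\infty)\ge c(\delta_0)\,|\partial_E A|$, writes $\caC(A\conn\infty)=\sum_{e\in\partial_E A}\caC(e^+)/(1+\caC(e^+))$, and concludes by pigeonhole since each summand is at most $1$. Your probabilistic first-moment argument, by contrast, has a quantitative gap that does not close in the form you state it. First, failure of $\delta_1$-goodness is the event that \emph{fewer than} $\delta_1 m$ of the $m=|\partial_E A|$ boundary edges are good, i.e.\ that \emph{more than} $(1-\delta_1)m$ are bad; you instead bound the probability of ``at least $\delta_1 m$ bad edges''. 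For small $\delta_1$ that event typically has probability tending to $1$ rather than being exponentially small: whenever $p_0>0$, a fraction of $\partial_V A$ comparable to the extinction probability $\tilde q$ consists of vertices whose subtree is finite, and these have $\cC(e^+)=0$, hence are bad for every threshold. For the same reason your claim that the number of boundary vertices lacking infinite descent is exponentially unlikely to exceed $\delta_1 m/2$ is false once $\delta_1<2\tilde q$.

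Second, and independently, the key inequality $2q^{\delta_1}<e^{-C}$ cannot be forced by shrinking $\delta_1$. Conditioned on survival, the subtree below a boundary vertex begins with a bare path of length $L$ in the surviving skeleton with probability $(p'_1)^L$, and such a path forces $\cC\le 1/L$ by the series law; hence $q(\epsilon)\ge (p'_1)^{\lceil 1/\epsilon\rceil}$, so with $\epsilon=\delta_1/(1-\delta_1)$ the product $\delta_1\log(1/q(\epsilon))$ stays bounded by roughly $\log(1/p'_1)$ as $\delta_1\downarrow 0$: the gain from shrinking the threshold is exactly cancelled by the loss in the exponent $\delta_1 m$. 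This bounded quantity need not exceed the entropy cost $\log 2+C/\delta_0$ of the union bound over connected sets, and it is arbitrarily small when $p'_1$ is close to $1$. The argument could probably be repaired by using the correct failure event --- requiring that nearly all of the roughly $(1-\tilde q)m$ surviving boundary vertices have conductance below $\epsilon$, which costs $q(\epsilon)^{cm}$ with $c$ bounded away from $0$ independently of $\delta_1$ --- but as written the exponential rate does not beat the entropy, so the Borel--Cantelli step fails.
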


\begin{proof}
Observe that if $o \in A$ and $A$ is connected, then any $K$ inside the
infimum in \eqref{e:psiAt} is a tree, and hence
\eqnst
{ |K|_{\deg}
  = \sum_{v \in K} \deg(v)
  = 2 |K| - 2 + |\partial_E K|. }
This implies that if $|A| \ge N_1(\mathbb{T})$, we have
\eqnst
{ \frac{| \partial_E K |}{ |K|_{\deg} }
  = \frac{| \partial_E K |}{2|K| - 2 + |\partial_E K|}
  \ge \frac{| \partial_E K |}{2|K| + |\partial_E K|}
  \ge \frac{\delta_0 |K|}{2 |K| + \delta_0 |K|}
  = \frac{\delta_0}{2 + \delta_0}. }
Consequently,
\eqnst
{ \psi(A,t)
  \ge \frac{\delta_0}{2 + \delta_0} t
  =: f(t). }
Therefore, an application of \cite[Theorem 6.41]{LP16}   (which gives an upper bound of the effective resistance in terms of integrals over the lower bound of the isoperimetric profile function)  yields that
\eqnst
{ \caR (A \conn \infty)
  \le \int_{|A|_{\deg}}^\infty \frac{16}{f(t)^2} \, dt
  = \frac{16 \, (2 + \delta_0)^2}{\delta_0^2} \, |A|_{\deg}^{-1}. }
Hence
\eqnst
{ \caC (A \conn \infty)
  \ge \frac{\delta_0^2}{16 \, (2 + \delta_0)^2} \, |A|_{\deg}
  \ge \frac{\delta_0^2}{16 \, (2 + \delta_0)^2} \, | \partial_E A |. }
Put 
\begin{equation*}\label{def:delta1}
\delta_1 = \frac{1}{2}\left( \frac{\delta_0^2}{16 \, (2 + \delta_0)^2} \right).
\end{equation*} 
Since
\eqnst
{ \caC (A \conn \infty)
  = \sum_{e \in \partial_E A} \frac{\caC(e^+)}{1 + \caC(e^+)}, }
we have that
\eqnst
{ \left| \left\{ e \in \partial_E A : \frac{\caC(e^+)}{1 + \caC(e^+)} \ge \delta_1 \right\} \right|
  \ge \delta_1 | \partial_E A |, }
which is the claimed inequality.
\end{proof}

\begin{proof}[Theorem \ref{thm:noK}]
Recall the positive constant $\delta_0$ from \eqref{e:CP},
the positive constant $\delta_1$ of Proposition \ref{prop:x0-good},
and the a.s.~finite random variable $N_1 = N_1(\bbT)$ of \eqref{e:CP-tail}.

Assume that $\bbT$ satisfies the event $\{ N_1(\bbT) \le t \}$.
On the event
\eqnst
{ \left\{ \sup_n M_n \le \frac{1}{4} t^{1/2} \right\} \cap
     \big\{ \left| (\text{edges in $\fF_o$}) \right| > t-1 \big\}, }
we have  $|\fF_o| \ge N_1$.
Hence by the anchored isoperimetry equation \eqref{e:CP}
and by Proposition \ref{prop:x0-good} we have
\eqnspl{e:CP+cond}
{ &\left| \text{(edges in $\fF_o$)} \right|
  = | \fF_o | - 1 \\
  &\qquad \le \frac{1}{\delta_0} | \partial_E \fF_o | \\
  &\qquad \le \frac{1}{\delta_0 \, \delta_1} \left| \text{(edges  $e$  in $\partial_E \fF_o$
      with
       $\frac{\cC(e^+)}{1 + \cC(e^+)} \ge \delta_1$}) \right|  \\
  &\qquad \le \frac{1}{\delta_0 \, \delta_1} \left( \frac{1}{\delta_1^3} \, \sum_{i=0}^{\sigma-1} D_i
      + \left| \text{(edges in $\partial_E \fF_o$ examined
      after time $\sigma-1$)} \right| \right), }
where the last inequality used that 
when $\cC_i / (1 + \cC_i) \ge \delta_1$, we have 
(recall \eqref{e:Di})
\eqnst
{ D_i
  = \cC_i \, \frac{\cC_i^2}{(1 + \cC_i)^2}
  \ge \delta_1^3. }
In order to estimate the last term in the right hand side of
\eqref{e:CP+cond}, we use that if $e_1, \dots, e_\ell$ are the
edges that are examined after time $\sigma$, then on the
event $\{ \sup_n M_n < (1/4) t^{1/2} \}$, we have
\eqnst
{ (1/4) t^{1/2}
  > M_\sigma
  = \sum_{j = 1}^\ell \frac{\cC(e_j^+)}{1 + \cC(e_j^+)}
  \ge \ell \, \frac{(1/2) t^{1/2}}{1 + (1/2) t^{1/2}}
  = \ell \, \frac{1}{1 + 2 t^{-1/2}}
  \ge \ell \, (1 - 2 t^{-1/2}), }
and hence for $t \ge 16$ we have
\eqnst
{ \ell
  \le \frac{(1/4) t^{1/2}}{1 - 2 t^{-1/2}}
  \le (1/2) t^{1/2}. }
This gives that the right hand side of \eqref{e:CP+cond} is
at most
\eqnst
{ \frac{1}{\delta_0 \, \delta_1^4} \sum_{i=0}^{\sigma-1} D_i
      + \frac{1}{\delta_0 \, \delta_1} \, \frac{t^{1/2}}{2}. }
The inequality \eqref{e:D-exp-bnd} implies that
\eqnsplst
{ \probT \left[ \sum_{i=0}^{\sigma-1} D_i > \frac{t \, \delta_0 \, \delta_1^4}{2} \right]
  \le \frac{2 \, M_0}{\delta_0 \, \delta_1^4} \, t^{-1/2}. }

Therefore, if $t \ge t_0 := (\delta_0 \, \delta_1)^{-2}$ and $\bbT$
satisfies the event $\{ N_1(\bbT) \le t \}$, we have
$\frac{1}{\delta_0 \, \delta_1} \, \frac{t^{1/2}}{2} \le \frac{t}{2}$, and
hence for all $t \ge t_0$ we have
\begin{eqnarray*}
 &\probT \big[ | \fF_o | > t \big]
  = \probT [ \text{\#(edges in $\fF_o$)} > t-1 ] \\
  &\qquad \le \probT \left[ \sup_n M_n \geq \frac{1}{4} t^{1/2} \right]
      + \probT \left[  \sup_n M_n < \frac{1}{4} t^{1/2} ,\frac{1}{\delta_0 \, \delta_1^4} \,
      \sum_{i=0}^{\sigma-1} D_i > \frac{t}{2} \right] \\
  &\qquad \le 4 M_0 t^{-1/2} + \frac{2 \, M_0}{\delta_0 \, \delta_1^4} \, t^{-1/2} \\
  &\qquad = \cC(o) \, \left[ 4 + \frac{2}{\delta_0 \, \delta_1^4} \right] \, t^{-1/2}. 
\end{eqnarray*}

This completes the proof of the first statement, 
for $t \ge \max \{ t_0(\bp),\, N_1(\bbT) \}$.
The second statement of the theorem follows immediately, 
since $C_1>1$, and              
also $N_1^{1/2} t^{-1/2} > 1$ if $t < N_1$.
\end{proof}

\section{Lower bound on waves}\label{sec:wave-lb}

In this section we prove the lower bound corresponding to Theorem \ref{thm:noK}. 
Denote by $f$ the generating function of $\bp$, that is $f(z) = \sum_{k \ge 0} p_k z^k$.
We introduce the following assumption on $f$:
\eqn{e:ass-beta}
{ \text{there exists $z_0 := e^{\beta_0} > 1$ such that
  $f(z_0) < \infty$.}
  \tag{M-$\beta$} }

\begin{theorem}
\label{thm:lb-beta-noK}
Suppose that $\bp$ satisfies Assumption
\eqref{e:ass-beta} with some $\beta_0 > 0$, and
suppose that $\sum_{k \ge 0} k p_k > 1$.
Then  conditioned on $F$  there exists $c = c(\bbT) > 0$ such that
\eqnst
{ \probT \big[ |\fF_o| > t \big]
  \ge c t^{-1/2}. }
\end{theorem}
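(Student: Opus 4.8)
The plan is to produce a lower bound on $\probT[|\fF_o| > t]$ by exhibiting, with probability bounded below by a constant times $t^{-1/2}$, a scenario in which the conductance martingale $M_n$ survives long enough (i.e.\ $\tau^- > \tau_{b,t}$ for a suitable $b$) and, crucially, this survival forces $\fF_o$ to be large. First I would run the exploration of $\fF_o$ in a way that keeps the conductance increments under control: by Assumption~\eqref{e:ass-beta}, the offspring distribution has an exponential moment, so the forward degrees $\deg^+(v)$ are light-tailed; since $\cC(v)\le\deg^+(v)$ by \eqref{eq:defC}, the increments $M_{n+1}-M_n$ satisfy uniform (random, but a.s.\ finite, with exponential tail) bounds when the edge added lies on $\mathbb{T}'$. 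In fact, along the infinite part $\mathbb{T}'$ of the tree, each $\cC(v)$ is bounded below by a positive constant as well (the tree is transient, so the effective conductance to infinity from any vertex with an infinite line of descent is $\ge$ some $c_2(\bbT)>0$ — this can be made quantitative using exponential growth), so the increments $D_i = \cC_i\cC_i^2/(1+\cC_i)^2$ are both bounded above and bounded below by positive constants when $e^+_{i+1}\in\mathbb{T}'$.

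Next I would restrict attention to exploring only within $\mathbb{T}'$ (always choosing active edges leading into $\mathbb{T}'$ first, discarding the finite bushes hanging off, which only costs a bounded factor since $\mathbb{T}\setminus\mathbb{T}'$ consists of finite subtrees with exponentially small sizes by \eqref{e:ass-beta}). On this restricted exploration $M_n$ is a martingale with bounded increments and uniformly positive increments of the quadratic variation, started from $M_0 = \cC(o) > 0$. Standard martingale estimates (a lower bound companion to Doob's inequality, e.g.\ via the optional stopping theorem applied to $M_{n\wedge\tau}^2 - \sum D_i$, or a direct comparison with a random walk) then give that the probability that $M_n$ reaches level $b\,t^{1/2}$ before hitting $0$ is at least $c'\,t^{-1/2}$ for suitable $b = b(\bbT)>0$ and $c' = c'(\bbT)>0$; this uses that hitting a high level requires on the order of $t$ steps because the quadratic variation increments are $O(1)$, and each such step adds at least one vertex to $\fF_\sigma \supset$ the explored cluster — wait, more carefully: I would argue that on the event $\{\tau_{b,t} < \tau^-\}$ the number of edges \emph{actually added to $\fF_o$} up to time $\tau_{b,t}$ is at least a constant times $t$, because each added edge contributes at most a bounded amount $\cC_i^2/(1+\cC_i) \le \overline{\cC}(e_i^+)$ to $M_n$ (light-tailed), so $M_{\tau_{b,t}} = \sum (\text{positive increments}) \le (\text{const})\cdot \#\{\text{edges in }\fF_o\text{ explored}\}$ once we control the rare large-increment edges. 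Hence $|\fF_o| > t$ on (a large sub-event of) $\{\tau_{b,t}<\tau^-\}$.

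The main obstacle I expect is the two-sided control of $\cC_i$ simultaneously for all explored edges: the upper bound on increments needs a quantitative light-tail statement on $\deg^+$ along the cluster, and the lower bound $\cC_i \ge c_2$ needs a \emph{uniform-in-$v$} lower bound on the conductance to infinity from vertices of $\mathbb{T}'$, which is where exponential growth / anchored isoperimetry must be invoked (possibly only after discarding finitely many ``bad'' vertices near the root, which is why the constant $c = c(\bbT)$ is allowed to be environment-dependent rather than universal). A secondary technical point is justifying the lower bound on the survival probability of $M_n$ up to a high level: since the increments are not identically distributed, I would either compare with an i.i.d.\ random walk bracketing the increments, or apply the optional stopping identity $M_0^2 = \ET[M_{\sigma'}^2\mathbf 1_{\sigma' < \tau^-}] - \ET[\sum_{i<\sigma'} D_i]$ with $\sigma' = \tau_{b,t}\wedge\tau^-$ together with the upper bound $M_{\sigma'} \le b't^{1/2}$ on the survival event to get $\ET[\sum_{i<\sigma'}D_i] \ge M_0^2 - $ (something), and then combine with $D_i \in [c_3, C_3]$ to conclude $\probT[\tau_{b,t}<\tau^-] \ge c' t^{-1/2}$; the two approaches are equivalent in spirit, and the martingale-square identity already used in the proof of Theorem~\ref{thm:noK} is the cleaner route to adapt here.
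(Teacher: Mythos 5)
There is a genuine gap at the heart of your plan: the deterministic implication from $\{\tau_{b,t}<\tau^-\}$ to $\{|\fF_o|>t\}$ does not hold, and your own ``more careful'' version of it proves the wrong power. If each positive increment $\cC_i^2/(1+\cC_i)$ is bounded by a constant, then reaching level $b\,t^{1/2}$ forces only $\#\{\text{edges added to } \fF_o\}\gtrsim t^{1/2}$, not $\gtrsim t$. The assertion that ``hitting a high level requires on the order of $t$ steps because the quadratic variation increments are $O(1)$'' is the diffusive heuristic, not a pathwise fact: the martingale can reach $b\,t^{1/2}$ in $O(t^{1/2})$ consecutive positive steps, and ruling out such quick crossings with probability $o(t^{-1/2})$ (not just $o(1)$) is not routine. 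A second, less fatal, problem is your claimed uniform lower bound $\cC(v)\ge c_2(\bbT)>0$ for all $v\in\bbT'$: this is false whenever $\bbT'$ has positive probability of single offspring, since arbitrarily long pipes occur throughout the tree and make $\cC(v)$ arbitrarily small. Fortunately no lower bound on the $D_i$ is needed anywhere.

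The paper avoids both issues by lower-bounding $\probT[\tau^->t]$ directly rather than $\probT[\tau_{b,t}<\tau^-]$. Since $M$ is a nonnegative martingale absorbed at $0$ at time $\tau^-$, one has $M_0=\ET[M_t\mathbf 1_{\tau^->t}]\le\ET[M_t^2]^{1/2}\,\probT[\tau^->t]^{1/2}$, and $\ET[M_t^2]=M_0^2+\ET\big[\sum_{i<\tau^-\wedge t}D_i\big]\le M_0^2+C\,\ET[\tau^-\wedge t]$, using only an \emph{upper} bound $\sum_{i<\tau^-\wedge t}D_i\le C(\tau^-\wedge t)$ (Proposition \ref{prop:var-bnd}, which controls the sum via $|A\cup\partial_V A|\le C'|A|$ from Proposition \ref{prop:LD-conduct} rather than via individual increments). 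The step you are missing entirely is that $\ET[\tau^-\wedge t]\le Ct^{1/2}$ is \emph{not} free: it is deduced from the already-proved upper bound of Theorem \ref{thm:noK} combined with $\tau^-=|\fF_o|-1+|\partial_V\fF_o|\le C'|\fF_o|$ (Lemma \ref{lem:ub-tau-}). Without that input the second moment is only $O(t)$ and Cauchy--Schwarz yields the useless bound $t^{-1}$. Finally, the same identity $\tau^-\le C'|\fF_o|$ converts $\{\tau^-\ge C't\}$ into $\{|\fF_o|\ge t\}$, which is the step your level-crossing event cannot deliver.
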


We will need the following a.s.~upper bound on the vertex boundary of sets.

\begin{proposition}
\label{prop:LD-conduct}
Under Assumption \eqref{e:ass-beta}, 
there exists an a.s.~finite $C' = C'(\bbT)$, such that for any finite
connected set $o \in A \subset \bbT$ we have
\eqn{e:LD-bdry}
{ |A \cup \partial_V A|
  \le C' |A|. }
\end{proposition}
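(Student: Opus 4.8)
The plan is to control the vertex boundary $\partial_V A$ in terms of the edge boundary $\partial_E A$ and then use the fact that, under Assumption \eqref{e:ass-beta}, the Galton-Watson tree has an a.s.~finite bound on the forward degrees of vertices at bounded depth, combined with the anchored isoperimetry \eqref{e:CP} to handle large sets. First I would note the elementary inequality $|\partial_V A| \le |\partial_E A|$, since every vertex of $\partial_V A$ is the head of at least one boundary edge (as $A$ is connected and $\bbT$ is a tree, in fact this is an equality away from the root), so it suffices to bound $|A \cup \partial_V A| \le |A| + |\partial_E A|$. Thus the whole statement reduces to finding an a.s.~finite $C'' = C''(\bbT)$ with $|\partial_E A| \le C'' |A|$ for all finite connected $o \in A \subset \bbT$.

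For sets with $|A| \ge N_1(\bbT)$ this is not quite what \eqref{e:CP} gives (that is a lower bound on $|\partial_E A|$), so instead I would bound $|\partial_E A|$ from above directly: writing $\partial^{in}_V A$ for the internal vertex boundary, $|\partial_E A| \le \sum_{v \in \partial^{in}_V A} \deg^+(v) \le \sum_{v \in A} \deg^+(v)$, and the sum of forward degrees over a tree of size $|A|$ rooted at $o$ telescopes to $|A| - 1$ if $A$ is \emph{all} of a finite subtree — but $A$ need not be downward closed. In general $\sum_{v \in A}\deg^+_\bbT(v)$ can exceed $|A|$ because of children of $v$ lying outside $A$; but each such child is a distinct vertex of $\partial_V A$, so in fact $\sum_{v \in A} \deg^+(v) = (|A|-1) + |\partial_V A^{\downarrow}|$ where $A^{\downarrow}$ accounts for forward-boundary vertices, giving again $|\partial_E A| \le |A| - 1 + |\partial_V A|$, i.e.~a circular bound. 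The clean route is: $|\partial_E A| = \sum_{v \in A} (\deg^+(v) - \#\{\text{children of }v\text{ in }A\})$, so I need an a.s.~bound on $\max_{v \in A}\deg^+(v)$, or more precisely on the average forward degree over connected sets containing $o$.

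The key input from Assumption \eqref{e:ass-beta} is that $f(z_0) < \infty$ with $z_0 > 1$ forces $p_k \le f(z_0) z_0^{-k}$, i.e.~the offspring law has exponential tails. A standard Borel-Cantelli argument then shows that there is an a.s.~finite random $R = R(\bbT)$ such that every vertex $v$ with $|v| \le n$ satisfies $\deg^+(v) \le R + \alpha n$ for a suitable deterministic $\alpha$ (indeed $\Prob[\exists v, |v| = n, \deg^+(v) > \lambda n] \le |\bbT_n|\, f(z_0) z_0^{-\lambda n}$, and since $|\bbT_n|$ grows only exponentially with a rate controlled by $\sum k p_k < \infty$, choosing $\lambda$ large makes this summable). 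Combined with the exponential growth guaranteed by \eqref{e:CP} (which forces any connected $A \ni o$ of size $|A| = n$ to have bounded diameter, namely depth $O(\log n)$ once $n \ge N_1$, so $\max_{v\in A}|v| = O(\log n)$), we get $\max_{v \in A}\deg^+(v) = O(\log |A|)$, hence $|\partial_E A| \le |A|\log|A| \cdot O(1)$ — which is \emph{not} linear. So the naive pointwise degree bound is too weak; the main obstacle is precisely getting a \emph{linear} (not $|A|\log|A|$) bound, which I expect requires an averaging/large-deviations argument over the forward degrees along $A$ rather than a worst-case bound: one should show that the \emph{total} offspring $\sum_{v \in A}\deg^+(v)$ over any connected $A \ni o$ of size $n$ is $O(n)$ a.s., uniformly in $A$, which follows by a union bound over the (exponentially many, by \eqref{e:CP}-controlled growth) connected subsets $A$ of size $n$, using that for fixed such a subset $\sum_{v\in A}\deg^+(v)$ is a sum of $n$ independent truncated-exponential-tail variables and hence concentrates, its upper large-deviation probability beating the number of subsets once the linear constant is large enough. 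The main obstacle is balancing this union bound: the number of size-$n$ connected subtrees containing $o$ grows like $e^{cn}$, so I need the large-deviation rate for $\sum \deg^+(v) > C'' n$ to exceed $c$ for $C''$ large, which holds because the moment generating function $f(e^\theta) = f(z_0 \cdot e^{\theta - \beta_0})$ is finite for $\theta$ slightly below $\beta_0$, making the rate function grow linearly in $C''$. Once $C'' $ is fixed this way, Borel-Cantelli over $n$ gives the a.s.~finite $C'(\bbT)$, completing the proof.
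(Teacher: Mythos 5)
Your overall strategy --- a first-moment/union bound over connected sets of size $n$, an exponential-moment large-deviation estimate for the total forward degree that beats the entropy, and Borel--Cantelli over $n$ --- is exactly the strategy of the paper's proof. (As a side remark, the preliminary reduction to $|\partial_E A|$ is unnecessary: since $\bbT$ is a tree and $A$ is connected, every $w\in\partial_V A$ has exactly one neighbour in $A$, so $|\partial_V A|=|\partial_E A|$, and the paper bounds $M:=|\partial_V A|$ directly.)

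The genuine gap is in how the union bound is organized. You propose to enumerate the connected size-$n$ subsets of the \emph{realized} tree $\bbT$ and assert that there are $e^{O(n)}$ of them ``by \eqref{e:CP}-controlled growth''. First, \eqref{e:CP} is a lower bound on edge boundaries and gives no upper bound on the number of subtrees. Second, the number of connected size-$n$ subsets containing $o$ is governed by products of factors of order $2^{\deg^+(v)}$ (the choices of which children of $v$ to include), i.e.\ by $2^{\sum_v \deg^+(v)}$ --- precisely the quantity you are trying to bound, so the count is circular as stated. Third, for a fixed subset $A$ of the realized tree, the family $\{\deg^+(v)\}_{v\in A}$ is not an i.i.d.\ sample from $\bp$: the very existence of $A$ as a subtree conditions each $\deg^+(v)$ to be at least the number of children of $v$ inside $A$. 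The paper resolves all three points at once by running the union bound over \emph{abstract rooted plane trees} $(A,o)$ with $n$ vertices, of which there are at most $4^n$ (via the depth-first-search encoding). For a fixed shape with prescribed forward degrees $d_v=n_v+m_v$ and prescribed child positions $I_v$, the embedding probability is exactly $\prod_v p(d_v)$; the choice of $I_v$ contributes $\binom{n_v+m_v}{n_v}$, the sum over $m_v$ is identified with $f^{(n_v)}(z_1)/n_v!$ and bounded by $f(z_0)\,C^{n_v+1}$ via Cauchy's integral formula, and the decay in $M=\sum_v m_v$ is extracted by tilting with $e^{-\beta M}$ for $1<e^{\beta}<z_0$. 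These devices --- in particular absorbing the child-selection entropy into the derivatives of the generating function --- are the substance of the proof and are absent from your sketch, so as written the plan does not close.
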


\begin{proof}
Fix a plane tree $A$ (i.e.~$A$ is a rooted tree with root $o$
and the children of each vertex of $A$ are ordered).
Also fix numbers $n_v, m_v$ for $v \in A$, with the following
properties:
\eqnsplst
{ n_v 
  &= \text{number of children of $v$ in $A$} \\
  n
  &:= |A| 
  = \sum_{v \in A } n_v + 1 \\
  m_v 
  &\ge 0 \\
  d_v 
  &:= n_v + m_v \\
  M
  &:= \sum_{v \in A} m_v. }
For each $v \in A$, fix a subset $I_v \subset \{ 1, \dots, d_v \}$ 
such that $| I_v | = n_v$. 
If we view $A$ as a subtree of $\bbT$ then every vertex $v\in A$ has  forward  degree $n_v$ in $A$ 
and forward  degree $d_v$ in $\T$. Thus each $v\in A$ has $m_v$ children in $\bbT$ which belong to 
$\partial_V A$. We define the event
\eqnsplst
{ E(A, \{ m_v \}, \{ I_v \})
  &= \left\{ \parbox{8cm}{$(\bbT,o)$ has a rooted subtree $(A',o)$ isomorphic to 
    $(A,o)$ such that the forward  degree in $\bbT$ of each $v \in A'$ equals $d_v$
    and the set of children in $A'$ of each $v \in A'$
    equals $I_v$} \right\}. }

The probability of $E(A, \{ m_v \}, \{ I_v \})$ equals
\eqnst
{ \Prob \big[ E(A, \{ m_v \}, \{ I_v \}) \big]
  = \prod_{v \in A} p(d_v)
  = \prod_{v \in A} p(n_v + m_v), }
where for readability we wrote $p(d_v)$ and $p(n_v + m_v)$ instead of 
$p_{d_v}$ and $p_{n_v + m_v}$. Hence, if $1 < e^{\beta} < z_0$, we have
\eqnspl{e:EA-bnd}
{ \Prob \left[ E(A, \{ m_v \}, \{ I_v \}) \right]
  = \exp ( - \beta M ) \,
      \prod_{v \in A} p(n_v + m_v) \,
      e^{\beta \, m_v}. }

Let 
\eqnsplst
{ E'(A, \{ m_v \})
  &= \left\{ \parbox{8cm}{$(\bbT,o)$ has a rooted subtree $(A',o)$ isomorphic to 
    $(A,o)$ such that the  forward  degree in $\bbT$ of each $v \in A'$ equals $d_v$} 
    \right\}. }
Taking a union bound in \eqref{e:EA-bnd} and summing over $\{ I_v \}$ yields:
\eqnspl{e:E'A-bnd}
{ &\Prob \left[ E'(A, \{ m_v \}) \right] \\
  &\qquad \le \exp ( - \beta M ) \,
      \prod_{v \in A} \binom{n_v + m_v}{n_v} \, p(n_v + m_v) \,
      e^{\beta \, m_v} \\
  &\qquad = \exp ( - \beta M ) \,
      \prod_{v \in A} \frac{1}{n_v!} \,
      (m_v + n_v) \, \cdots \, (m_v + 1) \, p(n_v + m_v) \,
      e^{\beta \, m_v}. }
In order to sum over $m_v$, we are going to use that 
\[
 \sum_{m \ge 0} \binom{n+m}{n} \, p(n+m) \, z^m
  = \frac{1}{n!} \sum_{m \ge 0} 
    p(n+m) \, (m+n) \, \cdots \, (m+1) \, z^m 
  = \frac{1}{n!} f^{(n)}(z). 
\]
For a fixed $\widetilde{M}$, let us define
\eqnsplst
{ E''(A, \widetilde{M})
  &= \left\{ \parbox{7cm}{$(\bbT,o)$ has a rooted subtree $(A',o)$ isomorphic to 
    $(A,o)$ such that $\left| \partial_V A' \right| \ge \widetilde{M}$} 
    \right\}. }

Recall that 
$1 < z_1 := e^{\beta} < z_0$. 
Fix some $C''$ and sum \eqref{e:E'A-bnd} over all $\{ m_v \}$, with 
$M \ge \widetilde{M} := (C''-1) n$.
This gives
\eqnspl{e:M-bndE''-2}
{ \Prob \left[ E''(A,\widetilde{M}) \right]
  \le \exp ( - \beta (C''-1) n ) \,
      \prod_{v \in A} \frac{1}{n_v!} \, f^{(n_v)} (z_1). }
Due to Cauchy's theorem, we have
\eqnst
{ \frac{1}{n_v!} f^{(n_v)} (z_1)
  \le f(z_0) \frac{1}{(z_0 - z_1)^{n_v + 1}}
  \le f(z_0) \, C^{n_v + 1}. }
Substituting this into \eqref{e:M-bndE''-2} and summing over
$A$, while keeping $n$ fixed, yields
\begin{eqnarray*}
 &\Prob \left[ \text{$\exists$ connected set $o \in A \subset \bbT$ with
        $|A| = n$ such that $|\partial_V A| > (C''-1) n$} \right] \\
  &\qquad \le \exp ( - \beta (C''-1) n ) \,
      4^n \, f(z_0)^n \, C^{2 n - 1}. 
\end{eqnarray*}

Here we used that there are $\le 4^n$ non-isomorphic rooted plane trees 
$(A,o)$ of $n$ vertices. (This can be seen by considering the depth-first 
search path of $A$ starting from $o$, which gives an encoding of the tree 
by a simple random walk path of length $2n$.)
If $C''$ is sufficiently large, the estimate in the right hand
side is summable in $n \ge 1$, and hence we have
$|A \cup \partial_V A| \le C'' |A| = C'' n$ for all but finitely
many $n$. Increasing $C''$ to some $C'$ if necessary,
yields the claim \eqref{e:LD-bdry}  on the size of the boundary.
\end{proof}

\begin{lemma}
\label{lem:ub-tau-}
Under Assumption \eqref{e:ass-beta}, there exists an a.s.~finite
$C = C(\bbT)$ such that
\eqnst
{ \ET \big[ \tau^- \wedge t \big]
  \le C t^{1/2}, \quad t \ge 1. }
\end{lemma}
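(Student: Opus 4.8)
The plan is to run the conductance martingale with the edge-selection rule of Section~\ref{sec:wave-ub} (applied with the present value of $t$), keeping the notation $\tau^-,\tau_{1/4,t},\sigma$ and $M_0=\cC(o)$ from there and recalling $\sigma\le\tau^-\wedge\tau_{1/4,t}$. Since $\tau^-\wedge t\le t$, the bound is immediate when $t$ lies below a threshold depending only on $\bbT$, so I may assume $t$ is as large as needed. Writing $\rho:=\sigma\wedge t$, a bounded stopping time, the plan reduces the lemma to the three estimates
\[
 \ET[\tau^-\wedge t]\ \le\ \ET[\rho]+C(\bbT)\,t^{1/2},\qquad
 \ET[\rho]\ \le\ C'\,\ET\big[|S_\rho|\big],\qquad
 \ET\big[|S_\rho|\big]\ \le\ C(\bbT)\,t^{1/2}.
\]

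For the first estimate I would split on $\{\tau_{1/4,t}<\tau^-\}$, where $\tau^-\wedge t\le t$ and Doob's inequality for the nonnegative martingale $M$ gives $\probT[\tau_{1/4,t}<\tau^-]\le 4M_0t^{-1/2}$; and on its complement, where $\sigma\le\tau^-$ and a short case analysis using Lemma~\ref{claim}(ii) (the alternative $\{\tau_{1/4,t}<\tau^-\}$ being excluded) shows that no vertex joins $\fF_o$ after time $\sigma$, so $\tau^--\sigma$ equals the number of active (non-$\fF$) edges at time $\sigma$; by the definition of $\sigma$ each of these has $\cC(e^+)^2/(1+\cC(e^+))\ge\tfrac12 t^{1/2}$, hence $\cC(e^+)\ge\tfrac12 t^{1/2}$, and so contributes at least $1-2t^{-1/2}$ to $M_\sigma=\sum_{e\text{ active}}\tfrac{\cC(e^+)}{1+\cC(e^+)}<\tfrac14 t^{1/2}$, forcing $\tau^--\sigma\le\tfrac12 t^{1/2}$ for $t$ large. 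Combining the two cases gives $\tau^-\wedge t\le\rho+\tfrac12 t^{1/2}+t\,\mathbf{1}_{\{\tau_{1/4,t}<\tau^-\}}$, and taking expectations yields the first estimate. For the second: for $i<\rho\le\tau^-$ one has $M_i>0$, so an edge is examined at step $i$, and exactly $\rho$ edges have been examined by time $\rho$; of these, $|S_\rho|-1$ are the tree edges of the connected set $S_\rho$ and the remaining $\rho-|S_\rho|+1$ lie in $\partial_E S_\rho$, so $\rho\le|S_\rho|-1+|\partial_E S_\rho|$, and Proposition~\ref{prop:LD-conduct} (together with $|\partial_E S_\rho|=|\partial_V S_\rho|$, which holds for connected sets in a tree) gives $\rho\le C'|S_\rho|$.

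It remains to bound $\ET[|S_\rho|]$. First $M_\rho\le t^{1/2}$: on $\{\sigma<\tau^-\}$ this is Lemma~\ref{claim}(i); on $\{\sigma=\tau^-\}$ one has $M_\sigma=0$; and on $\{\sigma>t\}$ one has $M_t<\tfrac14 t^{1/2}$. Hence, exactly as in the derivation of \eqref{e:D-exp-bnd} but now for the bounded stopping time $\rho$, $\ET\big[\sum_{i=0}^{\rho-1}D_i\big]=\ET[M_\rho^2]-M_0^2\le t^{1/2}\ET[M_\rho]=M_0\,t^{1/2}$. Writing $K=|S_\rho|$: on $\{K\ge N_1\}$, combining the anchored isoperimetry \eqref{e:CP} with Proposition~\ref{prop:x0-good} gives at least $\delta_0\delta_1 K$ edges $e\in\partial_E S_\rho$ with $\cC(e^+)/(1+\cC(e^+))\ge\delta_1$, and each such edge is either still active at time $\rho$ or was examined and found not to lie in $\fF$. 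There are at most $M_\rho/\delta_1\le t^{1/2}/\delta_1$ active ones, since each contributes at least $\delta_1$ to $M_\rho=\sum_{e\text{ active}}\tfrac{\cC(e^+)}{1+\cC(e^+)}$; and at most $\delta_1^{-3}\sum_{i<\rho}D_i$ examined ones, since an examined edge $e_{i+1}$ with $\cC_i/(1+\cC_i)\ge\delta_1$ has $D_i\ge\delta_1^3$ by \eqref{e:Di}. Thus $\delta_0\delta_1 K\le t^{1/2}/\delta_1+\delta_1^{-3}\sum_{i<\rho}D_i$ on $\{K\ge N_1\}$; taking expectations, using $\ET[\sum_{i<\rho}D_i]\le M_0 t^{1/2}$ and the trivial bound $\ET[K\,\mathbf{1}_{\{K<N_1\}}]\le N_1\le N_1 t^{1/2}$, gives $\ET[K]\le C(\bbT)\,t^{1/2}$ for $t\ge1$. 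Chaining the three displayed bounds finishes the proof, with $C(\bbT)$ an a.s.~finite function of $\bbT$ (through $N_1,C',\cC(o)$ and the threshold on $t$).

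I expect the last step to be the main obstacle: one has to pit four facts against one another — \eqref{e:CP} and Proposition~\ref{prop:x0-good} say a large component has many high-conductance boundary edges, Doob's inequality (via $M_\rho\le t^{1/2}$) says few of them can still be active, and the quadratic-variation bound \eqref{e:D-exp-bnd} says few of them can have been examined — so the component cannot be large. A delicate secondary point is the control of $M_\rho$, in particular the overshoot of $M$ past $\tfrac14 t^{1/2}$ produced when a high-conductance edge is examined; handling this is exactly why the process is stopped at $\sigma$ rather than at $\tau_{1/4,t}\wedge t$, and it is where Lemma~\ref{claim} is used.
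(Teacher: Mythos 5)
Your proof is correct, but it takes a noticeably different route from the paper's. The paper's own proof is three lines: since every edge examined before $\tau^-$ is either an edge of $\fF_o$ or in its edge boundary, $\tau^- \le |\fF_o| - 1 + |\partial_V \fF_o| \le C' |\fF_o|$ by Proposition \ref{prop:LD-conduct}, and then it simply integrates the tail bound $\ProbT[|\fF_o| \ge s] \le C s^{-1/2}$ already supplied by Theorem \ref{thm:noK}, summing over $1 \le s \le t$. You use the same reduction $\rho \le C'|S_\rho|$ via Proposition \ref{prop:LD-conduct}, but instead of citing Theorem \ref{thm:noK} you re-run its entire martingale argument (Lemma \ref{claim}, Proposition \ref{prop:x0-good}, \eqref{e:CP}, Doob, and the quadratic-variation bound \eqref{e:D-exp-bnd}) to produce a first-moment bound $\ET[|S_\rho|] \le C t^{1/2}$ directly. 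What your approach buys is that it never needs the full tail estimate, only $L^1$ information, and it is self-contained at the level of the exploration process; what it costs is essentially a duplicate of the proof of Theorem \ref{thm:noK}, which the paper has already paid for. Two small points of precision: in your first estimate, ``$\tau^- - \sigma$ \emph{equals} the number of active edges at time $\sigma$'' should be ``is at most'', since $M$ can hit zero before all active edges are examined (an active edge leading to a finite subtree contributes $0$ to $M$); the inequality is all you use, so nothing breaks. Likewise the paper's displayed identity $\tau^- = |\fF_o| - 1 + |\partial_V\fF_o|$ is really only needed as an upper bound for the same reason. Your observation that $|\partial_E A| = |\partial_V A|$ for connected sets in a tree is correct and used appropriately.
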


\begin{proof}
Note that the set of edges examined by the conductance martingale
up to time $\tau^-$ equals the edges in $\fF_o$ union the edge
boundary of $\fF_o$. Thus
$\tau^- = |\fF_o|-1 + |\partial_V \fF_o|$.
Using \eqref{e:LD-bdry} of Proposition \ref{prop:LD-conduct}, we have
\eqnst
{ \ProbT \big[ \tau^- \ge s \big]
  \le \ProbT \big[ |\fF_o \cup \partial_V \fF_o| \ge s \big]
  \le \ProbT \big[ |\fF_o| \ge (1/C') s \big]. }
The right hand side is at most $C s^{-1/2}$,
due to Theorem  \ref{thm:noK}. 
Summing over $1 \le s \le t$ proves the claim.
\end{proof}

We need one more proposition for the proof of
Theorem \ref{thm:lb-beta-noK}.

\begin{proposition}
\label{prop:var-bnd}
Under Assumption \eqref{e:ass-beta}, there exists
an a.s.~finite $C = C(\bbT)$ such that
\eqnst
{ \sum_{i = 0}^{\tau^- \wedge t - 1} D_i
  \le C (\tau^- \wedge t). }
\end{proposition}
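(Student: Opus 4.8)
The plan is to bound each increment $D_i$ by a quantity that telescopes, and then use Proposition \ref{prop:LD-conduct} to control the resulting sum. Recall from \eqref{e:Di} that $D_i = \cC_i \cdot \cC_i^2/(1+\cC_i)^2$, where $\cC_i = \cC(e_{i+1}^+)$. Since $\cC(e_{i+1}^+) \le \deg^+(e_{i+1}^+)$ by \eqref{eq:defC}, and more crudely $\cC_i^2/(1+\cC_i)^2 \le 1$, we get the deterministic bound $D_i \le \cC_i \le \deg^+(e_{i+1}^+)$. Thus, up to any time $n$,
\eqnst
{ \sum_{i=0}^{n-1} D_i
  \le \sum_{i=0}^{n-1} \deg^+(e_{i+1}^+)
  = \sum_{v \in S_n \cup \partial_V S_n,\ v \ne o} \deg^+(v), }
because the edges examined by time $n$ are exactly those incident to the growing cluster, so their heads $e_{i+1}^+$ range over the vertices of $S_n$ (other than $o$) together with $\partial_V S_n$ — each such vertex appearing once as a head. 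In fact it is cleaner to note $\sum_{v} \deg^+(v) \le \sum_v \deg(v) = |S_n \cup \partial_V S_n|_{\deg}$, and since $S_n \cup \partial_V S_n$ is a subtree this is $\le 2|S_n \cup \partial_V S_n|$.

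Next I would specialize to $n = \tau^- \wedge t$. If $\tau^- \le t$ then $S_{\tau^-} = \fF_o$ and the cluster is finite, so the above gives $\sum_{i=0}^{\tau^- - 1} D_i \le 2|\fF_o \cup \partial_V \fF_o|$, which by \eqref{e:LD-bdry} of Proposition \ref{prop:LD-conduct} is at most $2C'|\fF_o| = 2C'(|S_{\tau^-}|) \le 2C' \tau^-$ (using $\tau^- = |\fF_o| - 1 + |\partial_V\fF_o| \ge |\fF_o| - 1$, so $|\fF_o| \le \tau^- + 1 \le 2\tau^-$ for $\tau^- \ge 1$; the case $\tau^- = 0$ is trivial). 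If instead $\tau^- > t$, then $S_t$ is a connected set containing $o$ with $|S_t| \le t$, and Proposition \ref{prop:LD-conduct} applies to $A = S_t$ (it is finite connected), giving $\sum_{i=0}^{t-1} D_i \le 2|S_t \cup \partial_V S_t| \le 2C'|S_t| \le 2C' t$. In both cases $\sum_{i=0}^{\tau^-\wedge t - 1} D_i \le 2C'(\tau^- \wedge t)$, which is the claim with $C = 2C'(\bbT)$.

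The only mild subtlety — and the step I would be most careful about — is the bookkeeping in the identification of $\{e_{i+1}^+ : 0 \le i < n\}$ with the vertex set $(S_n \setminus \{o\}) \cup \partial_V S_n$: one must check that the construction in Section \ref{subs:cond-mart} indeed examines, by time $n$, precisely the edges with at least one endpoint in $S_n$, each contributing its far endpoint as a distinct head, so that no vertex is double-counted and the partial sum of degrees is exactly $|S_n \cup \partial_V S_n|_{\deg} - \deg(o) - (\text{correction for internal edges})$. Since we only need an upper bound, the crude estimate $\sum \deg^+(e_{i+1}^+) \le |S_n \cup \partial_V S_n|_{\deg} \le 2|S_n \cup \partial_V S_n|$ suffices and sidesteps the exact count. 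Everything else is a direct application of Proposition \ref{prop:LD-conduct} together with the trivial bound $D_i \le \cC_i \le \deg(e_{i+1}^+)$.
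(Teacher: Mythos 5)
Your overall strategy is the same as the paper's: bound $D_i \le \cC_i \le \deg^+(e_{i+1}^+)$, note that the heads of the examined edges are distinct vertices of the explored region, and control the resulting sum of forward degrees via Proposition \ref{prop:LD-conduct}. One step, however, is wrong as written: the claim that $|S_n\cup\partial_V S_n|_{\deg}\le 2\,|S_n\cup\partial_V S_n|$ ``since $S_n\cup\partial_V S_n$ is a subtree''. Here $\deg$ is the degree in $\bbT$, not in the subtree $K=S_n\cup\partial_V S_n$; as the paper itself records in the proof of Proposition \ref{prop:x0-good}, for a subtree $K$ one has $|K|_{\deg}=2|K|-2+|\partial_E K|$, and $|\partial_E K|$ is not negligible: for an unbounded offspring distribution it need not be bounded by a constant multiple $2$ of $|K|$ (already $K=\{o\}$ gives $|K|_{\deg}=\deg(o)$, which is unbounded). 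So the inequality $|K|_{\deg}\le 2|K|$ fails, and since the whole point of the proposition is to control the conductances $\cC_i$, which are bounded only by forward degrees in $\bbT$, this is exactly the place where care is needed.

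The gap is closed with the tool you already have, used once more. The children (in $\bbT$) of the vertices of $K$ all lie in $K\cup\partial_V K$, each being the child of a unique parent, so $\sum_{v\in K}\deg^+(v)\le |K\cup\partial_V K|$. Applying Proposition \ref{prop:LD-conduct} twice --- first to $S_n$ to get $|K|\le C'|S_n|$, then to $K$ (finite, connected, containing $o$) to get $|K\cup\partial_V K|\le C'|K|\le (C')^2|S_n|$ --- and using $|S_n|\le n+1$ gives the claim with $C$ of order $(C')^2$. This is precisely what the paper does: it splits the examined edges into those accepted into $\fF_o$ (heads in $A$, the explored part of $\fF_o$) and those rejected (heads in $\partial_V A$), and bounds the total by $C'|A|+(C')^2|A|$. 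The rest of your write-up --- the bound $D_i\le\cC_i\le\deg^+(e_{i+1}^+)$, the identification of the heads with distinct vertices of $(S_n\setminus\{o\})\cup\partial_V S_n$, and the case split on $\tau^-\le t$ versus $\tau^->t$ --- is correct and matches the paper's argument.
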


\begin{proof}
Let $A$ be the connected subgraph of $\bbT$ consisting of
the edges inside $\fF_o$ that have been examined by time
$\tau^- \wedge t$ and found to be in $\fF_o$.
Then $|A| \le \tau^- \wedge t$. For times $i$
such that the edge $e_i = (e_i^-, e_i^+)$ examined at time
$i$ was found to be in $\fF_o$, we use the bound (cf. \eqref{eq:defC}, \eqref{e:Di})
\eqnst
{ D_i
  = \cC_i \frac{\cC_i^2}{(1 + \cC_i)^2}
  \le \cC_i
  \le \deg^+(e_i^+). }
The sum of $D_i$ over such $i$ is hence bounded by
$|A \cup \partial_V A|$. 
We can bound the sum of $D_i$ over the
rest of the times by $| \partial_V (A \cup \partial_V A) |$.
Due to Proposition \ref{prop:LD-conduct}, there exists an
a.s.~finite $C = C(\bbT)$ such that
\eqnst
{ \sum_{i = 0}^{\tau^- \wedge t - 1} D_i
  \le |A \cup \partial_V A| + \sum_{w \in \partial_V A} \cC(w)
  \le C' |A| + (C')^2 |A|
  \le C (\tau^- \wedge t). }
\end{proof}

\begin{proof}[Theorem \ref{thm:lb-beta-noK}]
Recall that on the event $F=\{ \mathbb{T} \text{ survives}\}$ we have that $M_0>0.$
Using Proposition \ref{prop:var-bnd} and Lemma \ref{lem:ub-tau-},
we write
\eqnsplst
{ \ET \left[ M_t^2 \right]
  &= \ET \left[ M_t^2 \, \mathbf{1}_{\tau^- > t} \right]
  = M_0^2 + \ET \left[ \sum_{i = 0}^{\tau^- \wedge t - 1} D_i \right]
  \le M_0^2 + C \, \ET \big[ \tau^- \wedge t \big] \\
  &\le M_0^2 + C \, t^{1/2}
  \le C'' t^{1/2}. }
This gives
\eqnst
{ M_0
  = \ET \big[ M_t \big]
  = \ET \left[ M_t \, \mathbf{1}_{\tau^- > t} \right]
  \le \left( \ET \left[ M_t^2 \right] \right)^{1/2} \,
      \ProbT \big[ \tau^- > t \big]^{1/2}, }
and hence
\eqnst
{ \ProbT \big[ \tau^- > t \big]
  \ge \frac{M_0^2}{C'' t^{1/2}}. }
This gives, using \eqref{e:LD-bdry} of Proposition \ref{prop:LD-conduct},
that
\eqnsplst
{ \ProbT \big[ |\fF_o| \ge t \big]
  &\ge \ProbT \big[ |\fF_o \cup \partial_V \fF_o| \ge C' \, t \big] \\
  &= \ProbT \big[ |\fF_o| - 1 + |\partial_V \fF_o| \ge C' \, t - 1 \big] \\
  &= \ProbT \big[ \tau^- \ge C' \, t - 1 \big] \\
  &\ge c_4 \, t^{-1/2}. }
\end{proof}

\section{From waves to avalanches}
\label{sec:waves}
We use  the following  decomposition of the supercritical branching 
process (see \cite[Section 5.7]{LP16}). 
Recall the definition of the subtree $\bbT'$ of $\bbT$:
for any $v \in \bbT$ such that $\bbT(v)$ is finite,
we remove all vertices of $\bbT(v)$ from $\bbT$, and hence
$\bbT'$ consists of those vertices of $\bbT$ with an infinite 
line of descent. Note that $o\in\bbT'$. 
Let $\{ \widetilde{p}_k \}_{k\geq 0}$ be the offspring distribution of $\bbT$
conditioned on extinction.
Then $\bbT$ can be obtained from $\bbT'$ as follows. 
Let $\{ \widetilde{\bbT}^v : v \in \bbT' \}$ be i.i.d.~family trees 
with offspring distribution $\{ \widetilde{p}_k \}_{k\geq 0}$. 
Identify the root of $\widetilde{\bbT}^v$ with vertex $v$ of $\bbT'$.
Then 
\begin{equation*}
\bbT' \cup \left( \cup_{v \in\bbT'} \widetilde{\bbT}^v \right) 
\stackrel{\mathrm{dist}}{=} \bbT.
\end{equation*}

\begin{lemma}
\label{lem:decomF}
Let $v \in \bbT'$. On the event $v \in \fF_o$, we also have 
$\widetilde{\bbT}^v \subset  \fF_o$.
\end{lemma}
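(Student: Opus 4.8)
The plan is to use Wilson's algorithm rooted at $o$ (as described at the end of Subsection \ref{subs:WandA}) to sample $\fF_o$ under $\WSF_o$, exploiting the freedom to choose the order in which vertices are processed. The key observation is that for $v \in \bbT'$, the finite subtree $\widetilde{\bbT}^v$ hangs off $v$ and, being a subtree of the Galton-Watson tree attached at $v$, has the property that every vertex $w \in \widetilde{\bbT}^v$ other than $v$ has all of its $\bbT$-neighbours inside $\widetilde{\bbT}^v$ (since $\widetilde{\bbT}^v$ is a finite component in the decomposition, none of its vertices below $v$ has an infinite line of descent, and the only way out of $\widetilde{\bbT}^v$ is through $v$). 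Consequently, a loop-erased random walk started at any vertex $w \in \widetilde{\bbT}^v \setminus \{v\}$ is confined to $\widetilde{\bbT}^v$ until it reaches $v$, and then behaves like a walk in $\bbT$.

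First I would run Wilson's algorithm by enumerating $\bbT \setminus \{o\}$ so that all vertices of $\widetilde{\bbT}^v \setminus \{v\}$ come \emph{after} we have already processed enough of the tree to decide whether $v \in \fF_o$; in fact, since the event $\{v \in \fF_o\}$ is measurable with respect to the LERWs not using vertices strictly inside $\widetilde{\bbT}^v$, we may condition on $\{v \in \fF_o\}$ and then continue running Wilson's algorithm starting LERWs from the vertices of $\widetilde{\bbT}^v \setminus \{v\}$. Process them in order of \emph{decreasing} distance from $v$ (deepest first), say. When we start a LERW from such a vertex $w$, the walk stays inside $\widetilde{\bbT}^v$ (by the confinement observation above) until it hits either a previously-added path or the vertex $v$. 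Since $v$ is already in the current tree (as $v \in \fF_o$), the LERW from $w$ is stopped upon reaching $v$ at the latest, so the entire loop-erased path from $w$ lies in $\widetilde{\bbT}^v$ and gets added to the forest sample.

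Iterating over all $w \in \widetilde{\bbT}^v \setminus \{v\}$, every such vertex gets connected to the component of $o$ via a path inside $\widetilde{\bbT}^v$, so $\widetilde{\bbT}^v \subset \fF_o$ on the event $\{v \in \fF_o\}$, as claimed. The main point requiring care is the measurability/independence step: that the event $\{v \in \fF_o\}$ can be decided by Wilson's algorithm \emph{before} any LERW enters the interior of $\widetilde{\bbT}^v$, so that the conditioning is legitimate and the subsequent LERWs genuinely sample the conditional law. This follows because any LERW from a vertex $u \notin \widetilde{\bbT}^v$ that happens to wander into $\widetilde{\bbT}^v$ must enter and exit through $v$, and its loop-erasure therefore does not retain any vertex of $\widetilde{\bbT}^v \setminus \{v\}$; hence the connectivity of $o$ to $v$ is determined by the portions of the LERWs outside the interior of $\widetilde{\bbT}^v$, which is exactly the information we condition on. A cleaner alternative, which I would mention, is simply to note that $\fF_o$ is a subtree of $\bbT$ with one end (Subsection \ref{subsec:rec-stat}), so if $v \in \fF_o$ then the path from $o$ to infinity in $\fF_o$ passes through $v$; but this alone does not give the interior vertices, so the Wilson's-algorithm argument is the one to carry out in full.
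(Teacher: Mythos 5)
Your proof is correct and uses essentially the same mechanism as the paper: Wilson's algorithm together with the observation that $\widetilde{\bbT}^v$ meets the rest of $\bbT$ only at $v$, so walks started inside it are trapped until they reach $v$, while loop-erasures of outside walks retain no interior vertex. The paper's version is just terser — it starts the very first walk at $v$, so that $\{v\in\fF_o\}$ is exactly the event that this walk hits $o$, after which the confinement argument immediately attaches all of $\widetilde{\bbT}^v$.
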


\begin{proof}
Use Wilson's algorithm to generate $\fF_o$ by first starting a random walk
at $v$. If this walk hits $o$, all vertices of $\widetilde{\bbT}^v$
will belong to  $\fF_o$.
\end{proof}

\begin{remark}
Alternatively, it is possible to verify directly that a  recurrent  sandpile configuration 
restricted to any set $\widetilde{\bbT}^v \setminus \{ v \}$ is deterministic, and its height
equals $\deg(w) - 1$ at $w$. Hence if $v$ topples in a wave, all of $\widetilde{\bbT}^v$ topples. 
\end{remark}

\subsection{Quenched lower bound on avalanche size}\label{subsec:quenched}
Recall that given a supercritical Galton-Watson tree $\bbT$, we denoted by
$v^* = v^*(\bbT)$ the closest vertex to $o$ with the property that $v^*$
has at least two children with an infinite line of descent. 
 Let $\bbT'_{k}$ ($\bbT'_{\le k}$, etc.) denote the set of vertices
in the $k$-th generation of $\bbT'$ (in all generations up to
generation $k$, etc.), respectively. 
That is, the smallest
integer $k$ such that $|\bbT'_{k+1}| > 1$ is $|v^*|$.

The following theorem implies the quenched lower bound of Theorem \ref{thm:intro-qn}
stated in the introduction.

\begin{theorem}
\label{thm:av-lb}
Under assumption \eqref{e:ass-beta} and $\mu = \sum_{k \ge 0} k p_k > 1$,
there exists $c_0 = c_0(\bbT)$ that is a.s.~positive
on the event when $\bbT$ survives, such that we have
\eqn{e:av-lb}
{  \nu_{\T}  \big[ S > t \big]  \ge  \nu_{\T}  \left[ \left| W^1(\eta) \right| > t \right]
  \ge  \nu_{\T}  \left[ \left| W^{N-|v^*|}(\eta) \right| > t \right]
  \ge c_0 \, t^{-1/2}. }
\end{theorem}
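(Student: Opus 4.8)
The first two inequalities in \eqref{e:av-lb} hold pointwise: $S(\eta)=\sum_{i=1}^{N(\eta)}|W^i(\eta)|\ge|W^1(\eta)|$ by definition of $S$, and $|W^1(\eta)|\ge|W^{N-|v^*|}(\eta)|$ by the nesting $W^1(\eta)\supseteq\dots\supseteq W^{\mathrm{last}}(\eta)$ of Lemma \ref{lem:waves}(iii). So all the content is in the last inequality.

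Fix two children $w_1^*,w_2^*$ of $v^*$ that have an infinite line of descent, and consider the $\nu_\bbT$-event
\[
 \mathcal G:=\bigl\{\,\eta(v^*)=\deg(v^*)-1,\ \eta(w_2^*)\le\deg(w_2^*)-2\,\bigr\}.
\]
I would first check the deterministic fact that on $\mathcal G$ one has $W^{N-|v^*|}(\eta)=W^1(\eta)$, and hence $|W^{N-|v^*|}(\eta)|\ge|W^1(\eta)\cap\bbT(v^*)|$. Indeed, as noted in the proof of Lemma \ref{lem:waves2}, $H_0:=\bbT\setminus\bbT(v^*)$ is finite and, under $\nu_\bbT$, $\eta(w)=\deg(w)-1$ for every $w\in H_0$ almost surely; together with $\eta(v^*)=\deg(v^*)-1$, Lemma \ref{lem:waves}(i) then forces $\bbT_{\le|v^*|}\subseteq W^1(\eta)$, whereas $\eta(w_2^*)\le\deg(w_2^*)-2$ forces the child $w_2^*\in\bbT_{|v^*|+1}$ to lie outside $W^1(\eta)$; Lemma \ref{lem:waves}(ii) now yields $N(\eta)=|v^*|+1$. (For $v^*=o$ one has $H_0=\es$ and $W^{N-|v^*|}=W^{\mathrm{last}}$, and the argument is unchanged.) Moreover, on $\mathcal G$, $W^1(\eta)\cap\bbT(v^*)$ is exactly the connected component of $v^*$ in $\{v\in\bbT(v^*):\eta_v=\deg(v)-1\}$, again by Lemma \ref{lem:waves}(i). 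Therefore
\[
 \nu_\bbT\bigl[\,|W^{N-|v^*|}|>t\,\bigr]\ \ge\ \nu_\bbT\bigl[\,\mathcal G,\ |W^1\cap\bbT(v^*)|>t\,\bigr].
\]

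Because $\eta|_{H_0}$ is $\nu_\bbT$-a.s.\ deterministic, the conditional law of $\eta|_{\bbT(v^*)}$ is, by the spatial Markov property of the sandpile with the all-maximal boundary datum on $H_0$, the sandpile measure $\widehat\nu$ on the graph $\widehat\bbT$ obtained from $\bbT(v^*)$ by attaching one sink-edge at $v^*$; under $\widehat\nu$ the right-hand event above becomes $\{\,|W^1_{\widehat\bbT}|>t,\ w_2^*\notin W^1_{\widehat\bbT}\,\}$, where $W^1_{\widehat\bbT}$ is the first wave on $\widehat\bbT$. Next I would prove the reverse of Lemma \ref{lemma:SP-WSF} on $\widehat\bbT$ restricted to $\{w_2^*\notin W^1_{\widehat\bbT}\}$: in finite volume, choosing the Majumdar--Dhar burning order \cite{MD92,IKP94} so that $w_2^*$ is the distinguished neighbour of the root $v^*$, the map $\eta\mapsto\eta+\delta_{v^*}$ is a bijection from $\{\eta\in\mathrm R_H:\eta(v^*)\text{ maximal},\ w_2^*\notin W^1_H(\eta)\}$ onto $\{\bar\eta\in\overline{\mathrm R}_H:v^*\text{ and }w_2^*\text{ lie in different components of }\bar\eta\}$, and it preserves the wave --- subtracting $\delta_{v^*}$ reattaches $v^*$ precisely to $w_2^*$, which produces a spanning tree exactly when $w_2^*$ is already on the sink side. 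Running the count from the proof of Lemma \ref{lemma:SP-WSF} and letting $H\to\widehat\bbT$ gives, for any cylinder event $\mathcal A$,
\[
 \widehat\nu\bigl[\,W^1_{\widehat\bbT}\in\mathcal A,\ w_2^*\notin W^1_{\widehat\bbT}\,\bigr]\ \ge\ \WSF_{v^*}\bigl[\,\fF_{v^*}\in\mathcal A,\ w_2^*\notin\fF_{v^*}\,\bigr],
\]
where $\WSF_{v^*}$ is the wired spanning forest of $\widehat\bbT$ and $\fF_{v^*}$ the component of $v^*$. Finally, take $\mathcal A=\{|\cdot|>t,\ w_1^*\in\cdot\}$ and condition $\WSF_{v^*}$ on the forest outside $\bbT(w_1^*)$: the event that the edge $(v^*,w_1^*)$ is in the forest while $(v^*,w_2^*)$ is not has positive probability, and on it $w_1^*\in\fF_{v^*}$, $w_2^*\notin\fF_{v^*}$, and $\fF_{v^*}\cap\bbT(w_1^*)$ is the component of $w_1^*$ under the wired spanning forest of $\bbT(w_1^*)$. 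Since $w_1^*$ has an infinite line of descent, $\bbT(w_1^*)$ is distributed as a Galton--Watson tree with offspring law $\bp$ conditioned on survival (the decomposition recalled at the start of this section), so \eqref{e:ass-beta} and $\mu>1$ hold for it, and Theorem \ref{thm:lb-beta-noK} applied to $\bbT(w_1^*)$ gives $\WSF_{w_1^*}[\,|\fF_{w_1^*}|>t\,]\ge c(\bbT(w_1^*))\,t^{-1/2}$. Chaining the displays yields \eqref{e:av-lb} with $c_0=c_0(\bbT)$ almost surely positive on $F$.

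The step I expect to be the main obstacle is the reverse comparison on $\widehat\bbT$: pinning down exactly which recurrent configurations with $\eta(v^*)$ maximal and $w_2^*\notin W^1$ correspond, under the burning bijection \cite{MD92,IKP94}, to which two-component spanning forests, verifying that this survives the passage to infinite volume, and handling the extra sink-edge introduced at $v^*$ when $H_0$ is removed. The structural identity $W^{N-|v^*|}=W^1$ on $\mathcal G$, the reduction to $\bbT(v^*)$, the conditioning of the spanning forest on $\bbT(w_1^*)$, and the application of Theorem \ref{thm:lb-beta-noK} to the subtree are all routine.
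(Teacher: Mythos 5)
Your handling of the first two inequalities, the identification of the relevant wave via Lemma \ref{lem:waves2}, and the endgame (forcing one surviving child $w_2^*$ of $v^*$ out of the forest, keeping $w_1^*$ in, and applying Theorem \ref{thm:lb-beta-noK} to $\bbT(w_1^*)$) all match the paper's argument in substance. The genuine gap is exactly the step you flag as ``the main obstacle'': the claimed reverse of Lemma \ref{lemma:SP-WSF}, i.e.\ that $\eta\mapsto\eta+\delta_{v^*}$ is a bijection from $\{\eta\in\mathrm{R}_H:\eta(v^*)\text{ maximal},\,w_2^*\notin W^1_H(\eta)\}$ onto $\{\bar\eta\in\overline{\mathrm{R}}_H: w_2^*\text{ not in the wave}\}$. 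As stated this is doubtful: the image of your map consists only of configurations preceding a \emph{first} wave, whereas the target set also contains configurations preceding later waves --- for instance, if $\eta(v^*)$ and $\eta(w_2^*)$ are both maximal but some child of $w_2^*$ is not, then $w_2^*\in W^1$ while $w_2^*\notin W^2$, so the pre-$W^2$ configuration lies in your target set but not (at least not obviously) in your image. Since $|\overline{\mathrm{R}}_H|=G_H(v^*,v^*)\,|\mathrm{R}_H|$ while your domain has cardinality at most $|\mathrm{R}_H|$, surjectivity cannot be taken for granted, and without it the inequality $\widehat\nu[\cdot]\ge\WSF_{v^*}[\cdot]$ --- the crux of the entire lower bound --- is unsupported.

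The paper sidesteps this entirely, and so can you: drop the event $\mathcal{G}$ and the reduction to the \emph{first} wave on $\widehat\bbT$. Work on $\bbT$ with the wave-property $P=\{\text{$v^*$ topples, $w_2^*$ does not, size}>t\}$. Lemma \ref{lem:waves2} says each $\eta$ has \emph{at most one} wave with this property, and that wave is $W^{N-|v^*|}(\eta)$. The Ivashkevich--Ktitarev--Priezzhev correspondence already invoked in Lemma \ref{lemma:SP-WSF} counts waves against two-component forests, so summing over configurations gives in finite volume
\eqnst
{ \nu_H\big[\exists\, i:\ W^i_H(\eta)\in P\big]
  = \frac{1}{|\mathrm{R}_H|}\sum_{\eta\in\mathrm{R}_H}\#\{i: W^i_H(\eta)\in P\}
  = \frac{|\overline{\mathrm{R}}_H|}{|\mathrm{R}_H|}\,\WSF_{o,H}\big[\fF_o\in P\big]
  = G_H(o,o)\,\WSF_{o,H}\big[\fF_o\in P\big], }
with no new bijection needed; the factor $G^\bbT(o,o)>0$ obtained in the limit is just another $\bbT$-dependent constant absorbed into $c_0(\bbT)$. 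The remaining task is to bound $\WSF_o[\fF_o\in P]$ from below, which is your final step (conditioning on the edges $(v^*,w_1^*)$, $(v^*,w_2^*)$; the paper does the same thing with Wilson's algorithm started at $v^*,w_1^*,w_2^*$), and there your argument rejoins the paper's. One small further remark: in the quenched setting $\bbT(w_1^*)$ is a fixed surviving tree, so you apply Theorem \ref{thm:lb-beta-noK} to it directly; no appeal to its distribution is needed.
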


\begin{proof}
The first inequality follows from  \eqref{def:S-infini} 
and the second one  from Lemma \ref{lem:waves} (iii).
For the third inequality, assume the event that $\bbT$ survives.
Let $y_1, \dots, y_\ell$ be the children of $v^*$ with infinite
line of descent, $\ell \ge 2$. Let $\fG$ be the connected component of
$o$ in $\bbT \setminus \left( \cup_{j=1}^\ell \bbT(y_j) \right)$,
and note that $\fG$ is a finite graph.
We will use Wilson's algorithm to construct an event on which $v^*$ is in $\fF_o$ but
$y_1$ is not. 
Let us use Wilson's algorithm with the walks $S^{(*)}, S^{(1)}, S^{(2)}$
started at $v^*, y_1, y_2$ respectively, in this order.
Consider the event:
\eqnst
{ U
  := \left\{ \text{$S^{(*)}$ hits $o$; 
     $S^{(1)}$ does not hit $v^*$; $S^{(2)}$ hits $v^*$} \right\}. }

On this event $\fF_o$ will correspond to
a wave with the property that $v^*$ topples, but
at least one of its children, namely $y_1$, does not. 
Hence by Lemma \ref{lem:waves2}, this wave is
$W^{N-|v^*|}  (\eta)$. Moreover, we have
\eqnst
{ \fF_o
  \supset \fG \cup \fF^{(2)}_o, }
where $\fF^{(2)}_o$ is distributed as the $\mathsf{WSF}_o$ component of 
$y_2$ in $\bbT(y_2)$.
To complete the proof we note that 
\eqnsplst
{ \nu_{\T} \left[ |W^{N-|v^*|} (\eta) | > t \right]
  &\ge \ProbT \big[ U, \, |\fF^{(2)}_o| > t \big]
  = \ProbT \big[ U \big] \, P^{\bbT(y_2)} \big[ |\fF_o| > t \big] \\
  &\ge c(\bbT) \, c(\bbT(y_2)) \, t^{-1/2}}
where the equality follows from the fact that, conditioned on $U$,  
$\fF^{(2)}_o$ is equal in law to $\fF_o$ on $\bbT(y_2)$. The final lower 
bound follows from the transience of the random walk on $\bbT(y_1)$ 
on the one hand,  and 
on Theorem \ref{thm:lb-beta-noK}  on the other hand.  
\end{proof}

\subsection{Upper bound on avalanche size}\label{subsec:upper}

In this section we prove the following avalanche size bound.

\begin{theorem}
\label{thm:av-ub}
Assume that $1 < \sum_{k \ge 0} k p_k \le \infty$.
There exists $C = C(\bp)$ and on the event $F$ 
an a.s.~finite $N_2 = N_2(\bbT)$ such that
for all $t \ge N_2$ we have
\eqnst
{ \ProbT [ S > t ]
  \le C \, t^{-1/2}. }
\end{theorem}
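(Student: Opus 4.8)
The plan is to bound $S$ in terms of the sizes of the waves, using the wave decomposition $S = \sum_{i=1}^{N} |W^i(\eta)|$ together with the monotonicity $W^1 \supset W^2 \supset \cdots \supset W^{\last}$ from Lemma \ref{lem:waves}(iii), and then to control both the number of waves $N$ and the size of $W^1$ via the bounds already proved. First I would recall from Lemma \ref{lem:waves}(ii) that $N(\eta) = 1 + \max\{ k : \bbT_k \subset W^1(\eta)\}$, so on the event $\{|W^1(\eta)| \le s\}$ we automatically have $N(\eta) \le 1 + s$ (since $\bbT_k \subset W^1$ forces $|W^1| \ge k+1$, crudely). Thus $S \le N \cdot |W^1| \le (1 + |W^1|)\,|W^1|$, and it suffices to obtain a tail bound of the right order for $|W^1(\eta)|$ itself, namely $\ProbT[\,|W^1(\eta)| > u\,] \le C u^{-1/2}$ for $u$ large (depending on $\bbT$), and then to feed a quadratic-type estimate into the sum. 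Concretely, $S > t$ forces either $|W^1(\eta)| > t^{1/3}$ (say) combined with a large wave count, or more simply one partitions $\{S > t\}$ according to the value of $|W^1|$: if $|W^1| \le \sqrt{t}/2$ then $N \le 1 + \sqrt{t}/2$ and each wave has size $\le \sqrt t/2$, giving $S \le (1+\sqrt t/2)\cdot \sqrt t /2 < t$ for $t$ large, a contradiction; hence $\{S > t\} \subset \{|W^1(\eta)| > \tfrac{1}{2}\sqrt t\}$ for $t$ large enough.

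Next I would bound $\ProbT[\,|W^1(\eta)| > \tfrac12 \sqrt t\,]$. By Lemma \ref{lemma:SP-WSF}, for a cylinder event $\mathcal A$ we have $\nu_\bbT(W^1(\eta) \in \mathcal A) \le G^\bbT(o,o)\,\WSF_o(\fF_o \in \mathcal A)$; taking $\mathcal A = \{|\cdot| > u\}$ (approximated by cylinder events in the usual way, passing to the limit) gives
\eqnst
{ \nu_\bbT[\,|W^1(\eta)| > u\,] \le G^\bbT(o,o)\,\ProbT[\,|\fF_o| > u\,]. }
Now apply Theorem \ref{thm:noK}: on the survival event, $\ProbT[\,|\fF_o| > u\,] \le C_1\, N_1^{1/2}\, \overline{\cC}(o)\, u^{-1/2}$ for all $u > 0$. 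Substituting $u = \tfrac12\sqrt t$ yields
\eqnst
{ \nu_\bbT[\,|W^1(\eta)| > \tfrac12\sqrt t\,]
  \le G^\bbT(o,o)\, C_1\, N_1^{1/2}\, \overline{\cC}(o)\, \big(\tfrac12 \sqrt t\big)^{-1/2}
  = C(\bbT)\, t^{-1/4}, }
which is the wrong power. So the crude bound $S \le N\cdot|W^1|$ is too lossy, and the main obstacle is to recover the exponent $1/2$ rather than $1/4$.

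The fix is to not bound $N$ separately but to estimate $S$ directly through the conductance martingale, exactly as in the proof of Theorem \ref{thm:noK}, but now tracking the \emph{total toppling count} instead of $|\fF_o|$. The key observation is that $S = \sum_{i=1}^N |W^i(\eta)|$, and by Lemma \ref{lem:waves}(ii)--(iii) each successive wave loses (at least) the outermost full generation, so $S$ is comparable to $\sum_{v \in W^1(\eta)} (\,1 + \text{something like the co-depth of } v\,)$; more usefully, when $W^1(\eta)$ is matched with $\fF_o$ via Lemma \ref{lemma:SP-WSF}, one can show $S$ is dominated by a sum of the form $\sum_{v\in \fF_o} (\text{height of }\bbT(v)\text{ within }\fF_o) \le |\fF_o|\cdot (\text{depth of }\fF_o)$, and then invoke anchored isoperimetry \eqref{e:CP} (valid once $|\fF_o| \ge N_1$): a set of volume $n$ with $|\partial_E A| \ge \delta_0 n$ has depth $\gtrsim n$ only in degenerate (path-like) cases, and in general exponential growth of $\bbT$ forces depth $\le C\log n$. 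Thus $S \le C\,|\fF_o|\,\log|\fF_o|$ with high probability, and combining with Theorem \ref{thm:noK} gives $\ProbT[S > t] \le \ProbT[\,|\fF_o| \ge c\, t/\log t\,] \le C' (t/\log t)^{-1/2}$ — still a spurious logarithm.

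Given the logarithm keeps appearing, the genuinely correct route — and the one I would ultimately pursue — is to run the martingale argument of Section \ref{sec:wave-ub} directly with the ``weighted volume'' $S_\bbT(\eta)$ in place of $|\fF_o|$. That is, replace the key inequality \eqref{e:CP+cond} by a bound of the form
\eqnst
{ S
  \le \text{(const)} \Big( \tfrac{1}{\delta_1^3}\sum_{i=0}^{\sigma-1} D_i
      + \text{(edges examined after time }\sigma\text{, weighted)} \Big), }
where now each edge $e$ of $\fF_o$ is weighted by the number of waves in which the vertices of $\bbT(e^+) \cap \fF_o$ topple — which by Lemma \ref{lem:waves2} and the geometry of $v^*$ is at most a constant plus $|\partial_E K|$-type quantities, controlled again by \eqref{e:CP}. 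The point is that the anchored isoperimetric inequality, applied to the full avalanche cluster, both controls the number of boundary edges with good conductance (as in Proposition \ref{prop:x0-good}) \emph{and} shows that $S$ and $|\fF_o|$ are comparable up to a constant: a tree with $|\partial_E A| \ge \delta_0 |A|$ cannot have more than $O(1)$ generations carrying a positive fraction of its mass, so $S \le C |\fF_o|$. With this, the identical chain of estimates used to prove Theorem \ref{thm:noK} — Doob's inequality on $M_n$ to bound $\ProbT[\tau_{1/4,t} < \tau^-] \le 4 M_0 t^{-1/2}$, the variance identity $M_0^2 = \ET[M_\sigma^2 \mathbf 1_{\sigma<\tau^-}] - \ET[\sum D_i]$, and Lemma \ref{claim} — yields $\ProbT[S > t] \le C\,\cC(o)\, t^{-1/2}$ for $t \ge \max\{t_0, N_2(\bbT)\}$, where $N_2$ absorbs $N_1$ and the threshold at which the comparison $S \le C|\fF_o|$ kicks in. The main obstacle, then, is the combinatorial lemma ``anchored isoperimetry $\Rightarrow S \le C|\fF_o|$'': one must show that for a connected $A \ni o$ with $|\partial_E A| \ge \delta_0 |A|$, the number of waves is $O(1)$, equivalently $\max\{k : \bbT_k \subset A\} = O(1)$ — which holds because $\bbT_{\le k} \subset A$ would give $|A| \ge |\bbT_{\le k}|$ growing exponentially while the path structure needed to extend deep is incompatible with a linear lower bound on the edge boundary. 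I expect this comparison step, and carefully tracking that the weighted version of \eqref{e:CP+cond} still closes, to be the crux of the argument.
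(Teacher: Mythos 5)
Your diagnosis of the difficulty is accurate: the crude bound $S \le N\,|W^1|$ combined with $N \le 1+|W^1|$ only gives $t^{-1/4}$, and a depth bound gives a spurious $\log t$. The problem is that your proposed fix rests on a false combinatorial claim. You assert that anchored isoperimetry forces $N = O(1)$, ``equivalently $\max\{k : \bbT_k \subset A\} = O(1)$''; this is not true. Take $A = \bbT_{\le k}$: since $\bbT$ grows exponentially, $|\partial_E A| \ge |\bbT_{k+1}|$ is a positive fraction of $|A|$, so $A$ satisfies \eqref{e:CP} perfectly well, yet $N = k+1$ with $k \sim \log_\lambda |A|$ — there is no incompatibility between a linear edge-boundary lower bound and containing many full generations. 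Even the weaker comparison $S \le C(\bbT)\,|\fF_o|$ that you want to feed into the martingale argument is unjustified and fails in general: $S = \sum_{v \in W^1} J(v)$ where $J(v)$ is the number of waves containing $v$, and if $W^1$ contains both $\bbT_{\le k}$ (giving $N = k+1$ waves) and a long unary stretch of $\bbT$ of length $m \gg 2^k$ (whose vertices survive all but one peeling step each, so $J \approx N$ along most of it), then $S \gtrsim k\,m \sim |W^1| \log |W^1|$. So the ``crux'' lemma you flag as the remaining obstacle is not provable as stated, and the argument does not close.

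What the paper actually does is keep the number of waves as a random variable and exploit that the event $\{N = k+1\}$ is itself expensive. It splits $\ProbT[S>t]$ into three pieces according to $N \ge \overline{K}+1$, $N \le K_2$, and $K_2 \le N-1 < \overline{K}$ with $\overline{K}$ chosen so that $|\bbT'_{\overline{K}}| \asymp \log t$. The key point for the middle range is the factorization
\eqnst
{ \ProbT[\,N = k+1,\ S>t\,]
  \le G^\bbT(o,o)\,\ProbT\big[\,\fF_o \supset \bbT'_k\,\big]\;
      \ProbT\Big[\,|\fF_o| > \tfrac{t}{k+1}\ \Big|\ \fF_o \supset \bbT'_k\,\Big], }
where by Wilson's algorithm and Lemma \ref{l:cond-alt} the containment probability is at most $\prod_{v \in \bbT'_k}(1+\cC(v))^{-1}$, which by Proposition \ref{prop:x0-good} decays exponentially in $|\bbT'_k| \gtrsim k$; this exponential factor absorbs the polynomial losses $(k+1)^{1/2}|\bbT'_k|$ coming from the union bound over the subtrees $\bbT(w)$, $w \in \bbT'_k$, to which Theorem \ref{thm:noK} is applied (Lemma \ref{lem:as-bnds}(ii)). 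Summing the resulting $C\,e^{-ck}\,t^{-1/2}$ over $k$ gives the clean $t^{-1/2}$. If you want to salvage your approach, you would need to replace ``$S \le C|\fF_o|$'' by this kind of coupling between the event $\{N \ge k+1\}$ and its probability cost, rather than a deterministic comparison.
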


Recall that $N$ denotes the number of waves. This equals $1$ plus the largest
integer $k$, such that the first wave contains all vertices
in the $k$-th generation of $\bbT$,  see Lemma \ref{lem:waves} (ii).

We use the notation $\probT_v$ for the law of a simple
random walk $\{ S_n \}_{n \ge 0}$ on $\bbT$ with $S_0 = v$.
We denote the hitting time of a set $A$ by
$\xi_A := \inf \{ n \ge 0 : S_n \in A \}$.

\begin{lemma}
\label{lem:num-waves}
We have
\eqnst
{ \nu_{\T}  \big[ N \ge k+1 \big]
  \le G^\bbT(o,o) \, \prod_{e : e^+ \in \bbT'_{k}}
      \frac{1}{1 + \cC(e^+)}, \quad k \ge 0,}
where the empty product for $k = 0$ is interpreted as $1$.

\end{lemma}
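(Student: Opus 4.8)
\textbf{Proof plan for Lemma \ref{lem:num-waves}.}
The plan is to combine Lemma \ref{lem:waves}(ii), which identifies $\{N \ge k+1\}$ with the event $\{\bbT_k \subset W^1(\eta)\}$, with the domination of the law of $W^1(\eta)$ by $\fF_o$ provided by Lemma \ref{lemma:SP-WSF}. First I would observe that the event $\mathcal{A}_k := \{\bbT_k \subset F\}$ is a cylinder event (it depends only on finitely many edges once we note that $\bbT_k \subset W^1$ forces $\bbT_{\le k} \subset W^1$), so Lemma \ref{lemma:SP-WSF} applies and gives
\[
 \nu_{\T}\big[ N \ge k+1 \big]
 = \nu_{\T}\big[ \bbT_k \subset W^1(\eta) \big]
 \le G^\bbT(o,o)\, \WSF_o\big( \bbT_k \subset \fF_o \big).
\]
Here I use that $W^1(\eta)$ is a connected subset of $\bbT$ containing $o$, so $\bbT_k \subset W^1(\eta)$ is equivalent to requiring $W^1(\eta)$ to reach every vertex at level $k$, which is indeed determined by the cylinder data of $\fF_o$ restricted to $\bbT_{\le k}$.

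Next I would evaluate $\WSF_o(\bbT_k \subset \fF_o)$ by running the conductance martingale (equivalently Wilson's algorithm) level by level and peeling off one edge of $\partial_E(\bbT_{\le k-1})$ at a time. Recall from Lemma \ref{l:cond-alt} that, conditioned on a connected set $A \ni o$ being contained in $\fF_o$ (and no vetoed vertices), the probability that a given boundary edge $e=(e^-,e^+)\in\partial_E A$ has $e^+ \in \fF_o$ is exactly $(1+\cC(e^+))^{-1}$. Since $\bbT_k \subset \fF_o$ forces every edge joining level $j$ to level $j+1$ for $0 \le j \le k-1$ to lie in $\fF_o$, I would enumerate those edges, say $e^{(1)}, e^{(2)}, \dots$, and write the probability of all being present as a telescoping product of conditional probabilities. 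Because no vertex is ever vetoed in this construction ($B = \emptyset$ throughout), each factor is precisely $(1+\cC(e^+))^{-1}$, and the edges whose heads must be included are exactly those $e$ with $e^+ \in \bbT'_j$ for some $j \le k-1$ — but in fact one only needs $e^+$ ranging over vertices that are ancestors of level $k$, and more simply, the statement as written takes the product over all $e$ with $e^+ \in \bbT'_k$; I would reconcile this by noting that $\bbT_k \subset \fF_o$ forces the inclusion of, in particular, every edge $e$ with $e^+\in\bbT_k$ (these are pairwise "independent" once the whole of $\bbT_{\le k-1}$ is in $\fF_o$), giving the bound
\[
 \WSF_o\big(\bbT_k \subset \fF_o\big)
 \le \prod_{e:\, e^+ \in \bbT_k} \frac{1}{1+\cC(e^+)}
 \le \prod_{e:\, e^+ \in \bbT'_k} \frac{1}{1+\cC(e^+)},
\]
where the last inequality is because $\bbT'_k \subset \bbT_k$ and each factor is at most $1$.

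The main obstacle I anticipate is the bookkeeping in the second step: making the "peel one edge at a time and multiply conditional probabilities" argument rigorous requires choosing a fixed order in which to examine the relevant edges, checking that Lemma \ref{l:cond-alt} applies at each stage (the examined set $A$ stays connected, contains $o$, and no vertex is vetoed), and verifying that conditioning on the earlier edges being present does not change the conductance factor for a later edge — which is true precisely because the relevant subtrees $\bbT(e^+)$ for distinct boundary vertices $e^+$ at a given level are disjoint, so their conductances to infinity are unaffected. Once this independence-across-branches structure is spelled out, the product formula drops out and the reduction from $\bbT_k$ to $\bbT'_k$ is immediate since each factor $(1+\cC(e^+))^{-1} \le 1$. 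The case $k=0$ is trivial: $\{N \ge 1\}$ always holds (the empty product is $1$) and $G^\bbT(o,o) \ge 1$.
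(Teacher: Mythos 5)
Your proof is correct and follows essentially the same route as the paper: identify $\{N \ge k+1\}$ with $\{\bbT_k \subset W^1(\eta)\}$ via Lemma \ref{lem:waves}(ii), dominate by $G^\bbT(o,o)\,\WSF_o(\bbT_k\subset\fF_o)$ via Lemma \ref{lemma:SP-WSF}, and extract the product of factors $(1+\cC(e^+))^{-1}$ from Lemma \ref{l:cond-alt}; the paper phrases this last step via Wilson's algorithm with walks started in $\bbT'_k$ and passes from $\bbT_k$ to $\bbT'_k$ before rather than after taking the product, but it is the same computation. One caveat: your closing remark on $k=0$ is wrong on both counts --- $N\ge 1$ fails whenever $\eta_o<\deg(o)-1$, and $G^\bbT(o,o)=\caR(o\leftrightarrow\infty)$ can be strictly less than $1$ (e.g.\ on a $3$-ary tree) --- though this is harmless, since the $k=0$ case is already covered by your main argument with $\bbT_0=\{o\}$, the empty product, and $\WSF_o(o\in\fF_o)=1$.
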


\begin{proof}
We can bound from above the probability that the first wave contains
$\bbT_{\le k}$ by $G^\bbT(o,o)$ times the probability
that a typical wave contains it. Thus by Lemma \ref{lemma:SP-WSF}
\eqnst
{ \nu_{\T}  \big[ N \ge k+1 \big]
  \le G^\bbT(o,o) \, \ProbT \big[ \fF_o \supset \bbT_{\le k} \big]
  = G^\bbT(o,o) \, \ProbT \big[ \fF_o \supset \bbT'_{k} \big]. }
In the last step, we used that $\bbT_{\le k} \subset \fF_o$ 
if and only if $\bbT'_{k} \subset \fF_o$. This is implied by 
Lemma \ref{lem:decomF}, since if $\fF_o$ misses a vertex 
$w \in \bbT_{\le k}$, it will also necessarily miss an ancestor 
of $w$ lying in $\bbT'_{\le k}$, and hence will also miss a 
vertex of $\bbT'_k$.
Using Wilson's algorithm and Lemma \ref{l:cond-alt} 
with walks started at vertices in $\bbT'_{k}$,
we get that the probability in the
right hand side is at most

\eqnst
{ \prod_{e: e^+ \in \bbT'_{k}} 
     \probT_{e^+} ( \xi_{e^-} < \infty ) 
  = \prod_{e: e^+ \in \bbT'_{k}} \frac{1}{1 + \cC(e^+)}. }
\end{proof}
We denote by $\bp' = \{ p'_k \}_{k \ge 0}$ the offspring distribution 
of $\bbT'$.

\begin{lemma}
\label{lem:as-bnds}
Assume that $1 < \sum_{k \ge 0} k p_k  \le \infty$. \\
(i) We can find a constant $C_2 = C_2 ( \bp )$,
and on the event $F$ an a.s.~finite $K_1 = K_1 (\bbT') \ge N_1(\bbT')$ 
such that for all $k \ge K_1$ we have
\eqnsplst
{ \max \left\{ N_1(\bbT(w)) : w \in \bbT'_{k} \right\}
  \le C_2 \, \big| \bbT'_{k} \big|. }
Moreover, we have
$\Prob [ K_1 \ge  k  \,|\, F ] \le C \, \exp ( - \delta'_0  k  )$, where
$\delta'_0 = \delta_0(\bp')$ is the isoperimetric expansion constant
of $\bp'$.

(ii) 
We can also find $C_3 = C_3 ( \bp )$ and $c_2 = c_2(\bp) > 0$ 
such that for all $k \ge N_1(\bbT')$ we have
\eqnspl{e:big-expression}
{ (k+1)^{1/2} \, \big| \bbT'_{k} \big| \,
    \left( \sum_{w \in \bbT'_k} \overline{\cC}(w) \right) \,
    \prod_{v \in \bbT'_k}\frac{1}{1 + \cC(v)} 
  \le C_3 \, \exp ( - c_2 k ). }
\end{lemma}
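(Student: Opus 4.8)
The plan is to prove the two parts of Lemma~\ref{lem:as-bnds} using the decomposition of $\bbT$ into the backbone tree $\bbT'$ together with i.i.d.~finite bushes, combined with the tail bounds already established. For part (i), the key observation is that each vertex $w \in \bbT'_k$ is the root of a subtree $\bbT(w)$, and the dependence of $\bbT(w)$ on $\bbT'$ is only through the forward degree of $w$ in $\bbT'$. I would first note that $|\bbT'_k|$ grows like $(\mu')^k$ where $\mu' = \sum_k k p'_k > 1$, so that in particular $|\bbT'_k| \to \infty$ a.s.~on $F$; more precisely, by standard Galton-Watson theory (or by \eqref{e:CP} applied to $\bbT'$, which gives $|\bbT'_{\le k}| \ge$ const $\cdot (1+\delta'_0)^{?}$-type growth once $|\bbT'_{\le k}| \ge N_1(\bbT')$) one has an a.s.~lower bound $|\bbT'_k| \ge \rho^k$ for some $\rho > 1$ and all large $k$. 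Then I would use \eqref{e:CP-tail} applied to each subtree $\bbT(w)$: for fixed $w$, $\Prob[N_1(\bbT(w)) \ge n \mid \bbT(w) \text{ survives}] \le e^{-c_1 n}$. A union bound over $w \in \bbT'_k$ (there are $|\bbT'_k|$ of them, and conditionally on $\bbT'$ the subtrees $\bbT(w)$ are independent) shows that $\Prob[\max_{w \in \bbT'_k} N_1(\bbT(w)) > C_2 |\bbT'_k| \mid \bbT'] \le |\bbT'_k| \, e^{-c_1 C_2 |\bbT'_k|}$, which is summable in $k$ once $|\bbT'_k|$ grows at least linearly in $k$; Borel–Cantelli then produces the a.s.~finite $K_1$. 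The tail bound on $K_1$ itself is obtained by quantifying the failure event: $\{K_1 \ge k\}$ is contained in a union over $j \ge k$ of the events $\{N_1(\bbT') > $ const $\cdot j\} \cup \{\max_{w\in\bbT'_j} N_1(\bbT(w)) > C_2|\bbT'_j|\} \cup \{|\bbT'_j| < \rho^j\}$, and each of these has exponentially small probability in $j$ (the first by \eqref{e:CP-tail} for $\bbT'$, the second by the union bound just described, the third by a large-deviation bound for the supercritical Galton-Watson process $\bbT'$), giving the claimed $C e^{-\delta'_0 k}$ after absorbing constants; here one may need to choose the rate as $\delta'_0$ by being slightly careful, but any fixed positive exponential rate for the other two events dominates.

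For part (ii), the strategy is to take logarithms of the left-hand side of \eqref{e:big-expression} and show it is $\le -c_2 k + O(1)$ eventually. The polynomial prefactors $(k+1)^{1/2}$ and $|\bbT'_k|$ and $\sum_{w\in\bbT'_k}\overline\cC(w)$ are all at most $(\text{const})^k$ — indeed $|\bbT'_k| \le (\mu')^k \cdot (\text{slowly varying})$ a.s., and $\sum_{w\in\bbT'_k}\overline\cC(w) \le \sum_{w\in\bbT'_k}(\deg^+(w)+1)$, which by part (i)-type arguments or by \eqref{e:LD-bdry} of Proposition~\ref{prop:LD-conduct} applied to $A = \bbT'_{\le k}$ is also at most $(\text{const})^k$. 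So the whole question reduces to showing that
\eqnst
{ \prod_{v \in \bbT'_k} \frac{1}{1+\cC(v)}
  \le (\text{const})^k \cdot \exp(-c k) }
for some $c$ larger than the logarithm of the above constant — i.e.~that $\sum_{v\in\bbT'_k} \log(1+\cC(v))$ grows \emph{strictly faster} than linearly, in fact at least like $c' |\bbT'_k|$ for some $c' > 0$, which beats $(\text{const})^k$ provided $|\bbT'_k|$ grows geometrically. The point is that $\cC(v) = \cC(v \leftrightarrow \infty; \bbT(v))$ is bounded below by a positive constant with high probability for a "typical" vertex $v$ of $\bbT'$ (since $\bbT(v)$ contains $\bbT'(v)$ which is an infinite supercritical tree with transient simple random walk, so $\cC(v) \ge \cC(v \leftrightarrow\infty;\bbT'(v)) > 0$). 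I would make this quantitative: there is $\epsilon_0 > 0$ with $\Prob[\cC(v) \ge \epsilon_0] \ge 1/2$, say, for each $v\in\bbT'$, and using the conditional independence of $\{\bbT(v): v\in\bbT'_k\}$ given $\bbT'$ together with a concentration / Borel–Cantelli argument, a.s.~for all large $k$ at least $\frac14|\bbT'_k|$ of the vertices $v\in\bbT'_k$ have $\cC(v)\ge\epsilon_0$. Hence $\sum_{v\in\bbT'_k}\log(1+\cC(v)) \ge \frac14|\bbT'_k|\log(1+\epsilon_0) \ge c' \rho^k$, which dominates the linear-in-$k$ logarithm of every polynomial/exponential-in-$k$ prefactor, yielding \eqref{e:big-expression} with room to spare for all $k \ge$ some threshold; then one bumps the threshold down to $N_1(\bbT')$ by adjusting $C_3$.

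The main obstacle I anticipate is making the "typical vertex has bounded-below conductance" step rigorous with the right \emph{a.s.}~(not just in-probability) control and with explicit, non-random constants $C_3, c_2$ depending only on $\bp$: one must be careful that the threshold above which \eqref{e:big-expression} holds is allowed to depend on $\bbT$ through $N_1(\bbT')$ only, and that the exponential rate $c_2$ and constant $C_3$ are deterministic functions of $\bp$. This requires combining (a) a deterministic lower bound on $|\bbT'_k|$ valid once $k \ge N_1(\bbT')$, coming from \eqref{e:CP} for $\bbT'$ (which gives $|\bbT'_{\le k}|$ growing at a fixed geometric rate $1+\delta'_0$-ish), with (b) the quenched law-of-large-numbers statement that a definite fraction of the $\cC(v)$ are bounded below — and step (b), being a statement about the random environment rather than a deterministic consequence of isoperimetry, is where one needs the Borel–Cantelli argument of part (i) and hence where the interplay between the two parts of the lemma is essential. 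A secondary technical point is correctly handling the conditioning: $\{\bbT(v) : v \in \bbT'_k\}$ are i.i.d.~\emph{given the forward degrees in $\bbT'$ at generation $k$}, and $\cC(v)$ depends on the whole of $\bbT(v)$, so one should phrase everything conditionally on $\bbT'$ and use that $\cC(v)$ is a function of the bush $\widetilde\bbT^{(v)}$-decorated subtree rooted at $v$, which is conditionally independent across $v\in\bbT'_k$.
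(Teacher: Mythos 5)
Your part (i) is essentially the paper's own argument: condition on $\bbT'_{\le k}$, use the exponential tail \eqref{e:CP-tail} for each $N_1(\bbT(w))$ together with a union bound over the conditionally independent subtrees, and feed in the deterministic linear lower bound $|\bbT'_k| = |\partial_E \bbT'_{<k}| \ge \delta'_0 |\bbT'_{<k}| \ge \delta'_0 k$, valid for $k \ge N_1(\bbT')$ by \eqref{e:CP} applied to $\bbT'$. That part is fine (the geometric growth $|\bbT'_k|\ge\rho^k$ you invoke is not needed there, and the tail bound on $K_1$ follows as you sketch).

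Part (ii) has two genuine gaps. First, your bound on the prefactor $\sum_{w\in\bbT'_k}\overline{\cC}(w)$ via $\overline{\cC}(w)\le \deg^+(w)+1$ and Proposition \ref{prop:LD-conduct} is not available here: that proposition requires the exponential moment assumption \eqref{e:ass-beta}, whereas part (ii) is asserted under only $1<\sum_k k p_k\le\infty$ (the mean may even be infinite, so $\sum_w \overline{\cC}(w)$ admits no moment bounds at all). The paper never bounds this sum separately; it absorbs each $\overline{\cC}(w)$ into the matching factor of the product via $\overline{\cC}(w)/(1+\cC(w))\le 1$, i.e.
\begin{equation*}
\Big(\sum_{w\in\bbT'_k}\overline{\cC}(w)\Big)\prod_{v\in\bbT'_k}\frac{1}{1+\cC(v)}
= \sum_{w\in\bbT'_k}\frac{\overline{\cC}(w)}{1+\cC(w)}\prod_{v\ne w}\frac{1}{1+\cC(v)}
\le \sum_{w\in\bbT'_k}\prod_{v\ne w}\frac{1}{1+\cC(v)},
\end{equation*}
which removes the problematic factor entirely. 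Second, your route to ``a positive fraction of $v\in\bbT'_k$ has $\cC(v)\ge\epsilon_0$'' via a conditional law of large numbers and Borel--Cantelli only yields the conclusion for all $k$ beyond some a.s.~finite \emph{random} threshold, whereas the lemma demands the inequality for every $k\ge N_1(\bbT')$ with constants $C_3,c_2$ depending only on $\bp$; you cannot ``bump the threshold down by adjusting $C_3$'' because on the exceptional range the left-hand side depends on $\bbT$ and is not bounded by any function of $\bp$. The paper's point is precisely that this step is \emph{not} probabilistic: Proposition \ref{prop:x0-good} applied to $A=\bbT'_{<k}$ inside $\bbT'$ (legitimate as soon as $|A|\ge k\ge N_1(\bbT')$) deterministically guarantees that a proportion $\delta'_1$ of the vertices $v\in\bbT'_k$ satisfy $\cC(v)/(1+\cC(v))\ge\delta'_1$ (using Rayleigh monotonicity to pass from $\bbT'(v)$ to $\bbT(v)$), whence $\prod_{v\ne w}(1+\cC(v))^{-1}\le(1+\delta'_1)\exp(-(\delta'_1)^2|\bbT'_k|)$, and the remaining prefactor is only polynomial in $k$ and $|\bbT'_k|$, so $|\bbT'_k|\ge\delta'_0 k$ finishes the proof with deterministic constants.
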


\begin{proof}
(i) Conditioned on $\bbT'_{\le k}$, the trees
$\big\{ \bbT(w) : w \in \bbT'_{k} \big\}$ are independent, 
and the variables $N_1(\bbT(w))$ have an exponential tail, due to
\eqref{e:CP-tail}. Hence we have
\eqnsplst
{ &\Prob \Big[ \max \big\{ N_1(\bbT(w)) : w \in \bbT'_{k} \big\}
    > C_2 \, \big| \bbT'_{k} \big| \Big] \\
  &\qquad = \E \bigg[ \Prob \Big[ \max \big\{ N_1(\bbT(w)) : w \in \bbT'_{k} \big\}
    > C_2 \, \big| \bbT'_{k} \big| \,\Big|\, \bbT_{\le k} \Big] \bigg] \\
  &\qquad \le \E \left[ \sum_{w \in \bbT'_{k}}
    \Prob \Big[ N_1(\bbT(w)) > C_2 \, \left| \bbT'_{k} \right| \,\Big|\,
    \bbT'_{\le k} \Big] \right] \\
  &\qquad \le \E \left[ \left| \bbT'_{k} \right| \, C \,
    \exp \left( - c \, C_2 \, \left| \bbT'_{k} \right| \right) \right]. }

If $C_2 > 2/c$, then the right hand side is at most
\eqnspl{e:exp-bnd}
{ C \, \E [ \exp ( - |\bbT'_{k}| ) ]. }
If $k \ge N_1(\bbT')$, then 
\begin{equation}\label{eq:crucbd}
| \bbT'_{k} | \ge \delta'_0 | \bbT'_{<k} | \ge \delta'_0 k
\end{equation}
 and hence
\eqref{e:exp-bnd}  is summable in $k \ge 1$. Therefore, statement (i) follows from the
Borel-Cantelli Lemma.

(ii) Let us write the sum over $w$, together with the product over $v$ in the form:
\eqnsplst
{ \sum_{w \in \bbT'_k} \frac{\overline{\cC}(w)}{1 + \cC(w)} \, 
    \prod_{v \in \bbT'_k : v \not= w} \frac{1}{1 + \cC(v)}
  \le \sum_{w \in \bbT'_k} \prod_{v \in \bbT'_k : v \not= w} \frac{1}{1 + \cC(v)} }
Assume $k \ge N_1(\bbT')$. Then Proposition \ref{prop:x0-good} can be applied 
with $A = \bbT'_{<k}$ (since $|A| \ge k \ge N_1(\bbT')$), and this gives that, for $\delta'_1 := \delta_1(\bp')$, we have
at least for a proportion $\delta'_1$ of the $\delta'_1$-good vertices $v$
that
\[
\caC(v) > \frac{\caC(v)}{1 + \caC(v)} \ge \delta'_1,
\]
so for these $v$ we have $1/(1 + \caC(v)) < (1 + \delta'_1)^{-1}$. 
Therefore
\eqnst
{ \prod_{v \in \bbT'_k : v \not= w} \frac{1}{1 + \cC(v)}
  \le \left( 1 + \delta'_1 \right)^{-\delta'_1 | \bbT'_k |+1}
  \le (1 + \delta'_1) \, \exp \left( - (\delta'_1)^2 \, | \bbT'_k | \right). }
Thus the left hand side expression in \eqref{e:big-expression} is bounded above by 
\eqnst
{ (k+1)^{1/2} \,  | \bbT'_k |  \, (1 + \delta'_1) \, \exp ( - (\delta'_1)^2 \, |\bbT'_k| ). }
Since $| \bbT'_k | \ge \delta'_0 \, k$, the statement follows.
\end{proof}

In what follows, we write 
\begin{equation}\label{eq:calT}
\cT = \cT(k) := \bbT'_{<k} \cup \left( \cup_{v \in \bbT'_{< k}} \widetilde{\bbT}^v \right).
\end{equation}

\begin{lemma}
\label{lem:cT-bnd}
Assume that $1 < \sum_{k \ge 0} k p_k \le \infty$. 
There exists an a.s.~finite $K_2 = K_2(\bbT') \ge K_1$ such that
for all $k \ge K_2(\bbT')$ we have
\eqn{eq:c}
{ |\cT|
  \le (1/\alpha) \left| \bbT'_{<k} \right|
  \le (\alpha \, \delta_0)^{-1} \, \left| \bbT'_{k} \right|. }
Moreover,
\[
 \Prob [ K_2 \ge  k ]
  \le C \exp ( - c  k ). 
\]
\end{lemma}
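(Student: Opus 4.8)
The goal is to show that $\cT(k) = \bbT'_{<k} \cup \left( \cup_{v \in \bbT'_{<k}} \widetilde{\bbT}^v \right)$ has size comparable to $|\bbT'_{<k}|$ (and hence to $|\bbT'_k|$, using the anchored isoperimetry already proven for $\bbT'$), for all $k$ large enough depending on $\bbT'$, with an exponential tail on the exceptional scale $K_2$. The natural strategy is a Borel--Cantelli argument: for a well-chosen constant $\alpha \in (0,1)$, bound $\Prob[\,|\cT(k)| > (1/\alpha)|\bbT'_{<k}|\,]$ by something summable in $k$, then take $K_2$ to be one more than the largest $k$ for which the bad event occurs (plus $K_1$, to enforce $K_2 \ge K_1$).

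\medskip

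\textbf{Key steps.} First I would condition on $\bbT'_{\le k}$, which determines $\bbT'_{<k}$ and in particular $n := |\bbT'_{<k}|$. Given this, the trees $\{\widetilde{\bbT}^v : v \in \bbT'_{<k}\}$ are i.i.d.\ with the subcritical (extinction) offspring law $\{\widetilde{p}_k\}$, so $\sum_{v \in \bbT'_{<k}} |\widetilde{\bbT}^v|$ is a sum of $n$ i.i.d.\ copies of $|\widetilde{\bbT}|$, the total progeny of a subcritical Galton--Watson tree. Under Assumption \eqref{e:ass-beta}, the original offspring law has an exponential moment; a standard fact is that then the extinction offspring law $\{\widetilde p_k\}$ also has an exponential moment, and since the associated process is subcritical, $|\widetilde{\bbT}|$ itself has an exponential moment (e.g.\ via the Otter--Dwass / hitting-time representation, or directly from analyticity of the generating function of total progeny at $1$). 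Let $m := \E[\,|\widetilde{\bbT}|\,] < \infty$. Then a Chernoff bound gives
\eqnst
{ \Prob\Big[ \textstyle\sum_{v \in \bbT'_{<k}} |\widetilde{\bbT}^v| > (m+1)\, n \;\Big|\; \bbT'_{\le k} \Big]
  \le e^{-c_3 n} }
for some $c_3 > 0$ depending only on the law. Since $|\cT(k)| = \sum_{v \in \bbT'_{<k}} |\widetilde{\bbT}^v|$ (each $v$ is the root of its own $\widetilde{\bbT}^v$, so these sets partition $\cT(k)$), on the complement of this event we get $|\cT(k)| \le (m+1)|\bbT'_{<k}| = (1/\alpha)|\bbT'_{<k}|$ with $\alpha = 1/(m+1)$. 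Now, on the event $\{k \ge N_1(\bbT')\}$ we have $n = |\bbT'_{<k}| \ge \delta'_0 \cdot (\text{something growing})$ — more directly, $|\bbT'_{<k}| \ge k$ trivially for a surviving tree, which already makes $e^{-c_3 n} \le e^{-c_3 k}$ summable — and the second inequality in \eqref{eq:c}, $|\bbT'_{<k}| \le (1/\delta_0)|\bbT'_k|$, is exactly \eqref{e:CP} applied to $A = \bbT'_{<k}$ (valid once $|\bbT'_{<k}| \ge N_1(\bbT')$). Taking the expectation over $\bbT'_{\le k}$ removes the conditioning:
\eqnst
{ \Prob\big[\, |\cT(k)| > (1/\alpha)|\bbT'_{<k}| \, \big] \le e^{-c_3 k}. }
Borel--Cantelli then yields an a.s.\ finite random index beyond which the bad event never occurs; set $K_2 := K_1 \vee (\text{that index})$, which is a.s.\ finite and $\ge K_1$ by construction. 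The tail bound $\Prob[K_2 \ge k] \le C e^{-ck}$ follows by combining the summable bound above with the exponential tail on $K_1$ from Lemma \ref{lem:as-bnds}(i) and on $N_1(\bbT')$ from \eqref{e:CP-tail} (a union bound over $j \ge k$ of $e^{-c_3 j}$, plus the two other exponential tails, is still exponentially small in $k$).

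\medskip

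\textbf{Main obstacle.} The only non-routine point is confirming that the total progeny $|\widetilde{\bbT}|$ of the \emph{conditioned-on-extinction} process has an exponential moment. This needs two facts: (a) the extinction offspring generating function $\widetilde f(z) = f(q z)/q$, where $q$ is the extinction probability, inherits an exponential moment from $f$ (immediate, since $f$ is finite on $[1, z_0)$ and $q < 1$ so $\widetilde f$ is finite on $[1, z_0/q)$ with $z_0/q > z_0 > 1$); and (b) for a \emph{subcritical} offspring law with an exponential moment, the total-progeny generating function $\widetilde F$ solves $\widetilde F(s) = s\,\widetilde f(\widetilde F(s))$ and, by the implicit function theorem applied at $s=1$ where $\widetilde F(1) = 1$ and $1 - \widetilde f'(1) > 0$, extends analytically slightly past $s=1$ — giving the exponential moment of $|\widetilde{\bbT}|$. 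I would state this as a short lemma or cite it (it is classical; cf.\ \cite[Section 5.7]{LP16} or standard branching-process references). Everything else is a textbook Chernoff-plus-Borel--Cantelli argument of exactly the type already used in the proof of Lemma \ref{lem:as-bnds}(i), so I would keep it brief and model the write-up on that proof.
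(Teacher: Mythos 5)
Your overall strategy --- condition on $\bbT'_{\le k}$, write $|\cT|$ as a sum of $|\bbT'_{<k}|$ i.i.d.\ copies of the total progeny of the conditioned-on-extinction tree, apply a Chernoff bound using $|\bbT'_{<k}|\ge k$ on the event of survival, and finish with Borel--Cantelli --- is exactly the paper's proof; your treatment of the second inequality in \eqref{eq:c} via \eqref{e:CP} applied to $A=\bbT'_{<k}$ and of the exponential tail of $K_2$ is also fine. The one genuine problem is that you derive the exponential moment of $|\widetilde{\bbT}^o|$ from Assumption \eqref{e:ass-beta}, which is \emph{not} a hypothesis of this lemma: the lemma (and Theorem \ref{thm:av-ub}, which relies on it) assumes only $1<\sum_{k\ge 0}k p_k\le\infty$. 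As written, your argument therefore proves a strictly weaker statement and would propagate an unwanted moment assumption into the quenched and annealed upper bounds.

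The fix is already implicit in your own ``main obstacle'' paragraph: the dual offspring generating function is $\widetilde f(z)=f(qz)/q$ where $q<1$ is the extinction probability (if $q=0$ the bushes are singletons and there is nothing to prove), and since $f$ is always finite on $[0,1]$, the function $\widetilde f$ is finite on $[0,1/q)$ with $1/q>1$ --- no exponential moment of $f$ is needed for the dual law to have one. Combined with $\widetilde f'(1)=f'(q)<1$ (strict subcriticality of the dual process), the total progeny $|\widetilde{\bbT}^o|$ has an exponential tail unconditionally; the paper simply cites Harris, \emph{The theory of branching processes}, Theorem 13.1, for this fact. With that substitution your proof coincides with the paper's (the paper chooses its constants $b$ and $\alpha=b/(1+b)$ slightly differently from your $\alpha=1/(m+1)$, but this is immaterial).
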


\begin{proof}
Note that the size of $\widetilde{\bbT}^o$ has an exponential tail; see for example
\cite[Theorem 13.1]{H63}. Thus there exists 
$\lambda_0 = \lambda_0 (\bp) > 0$ such that  
\eqn{eq:exptildeT}
{ \E \big[ \exp ( \lambda_0 | \widetilde{\bbT}^o | ) \big]
  =: C(\lambda_0) 
  < \infty. }
Let $0 < b \le 1/2$ be a number that we fix with the property that 
\begin{equation}
\label{eq:for-b}
  C(\lambda_0)^b 
  \le e^{\lambda_0/4}.
\end{equation}
Conditionally on $\bbT'_{\le k}$, the trees $\{ \widetilde{\bbT}^v : v \in \bbT'_{< k} \}$
are i.i.d.~with the distribution of $\widetilde{\bbT}^o$.
Therefore, for $\alpha := b/(1+b) \le 1/3$, using \eqref{eq:exptildeT} and \eqref{eq:for-b},
we have that 
\eqnsplst
{ \Prob \left[ |\cT| > (1/\alpha) |\bbT'_{<k}| \right]
  &= \sum_{A} \Prob \left[ \bbT'_{<k} = A \right] \,
    \Prob \big[ |\cT| > (1/\alpha) |A| \,\big|\, \bbT'_{<k} = A \big] \\
  &= \sum_{A} \prob \left[ \bbT'_{<k} = A \right] \,
    \Prob \left[ \sum_{v \in A} \left| \widetilde{\bbT}^v \right| > (1/b) |A| \right] \\
  &\le \sum_{A} \prob \left[ \bbT'_{<k} = A \right] \,
    e^{-\lambda_0 \, (1/b) \, |A|} \, C(\lambda_0)^{|A|} \\
  &\le \sum_{A} \prob \left[ \bbT'_{<k} = A \right] \,
    e^{- (3/4) \, (\lambda_0/b) \, |A|}
  \le e^{- (3/4) \, k \, \lambda_0 / b}. }
Thus the claim follows from the Borel-Cantelli Lemma.
\end{proof}

\begin{proof}[Theorem \ref{thm:av-ub}]
In the course of the proof we are going to choose $\overline{K} = \overline{K}(t) \ge K_2$
(recall $K_2$ defined in Lemma \ref{lem:cT-bnd}). 
We can then write:
\eqnspl{e:three-terms}
{ \ProbT [ S > t ]
  &\le \ProbT [ N \ge \overline{K}+1 ]
      + \ProbT [ 1 \le N \le K_2,\, S > t ]
      + \sum_{K_2 \le k < \overline{K}} \ProbT [ N = k+1,\, S > t ]. }
The first term in the right hand side of \eqref{e:three-terms} can be bounded using 
Lemma \ref{lem:num-waves}:
\eqnst
{ \ProbT [ N \ge \overline{K}+1 ]
  \le G^\bbT(o,o) \, \prod_{e : e^+ \in \bbT'_{\overline{K}}}
      \frac{1}{1 + \cC(e^+)}. }
Since $\overline{K} \ge K_1 \ge N_1(\bbT')$, we can apply Proposition \ref{prop:x0-good}
to $A = \bbT'_{<k}$, and hence
\eqnsplst
{ \ProbT [ N \ge \overline{K}+1 ]
  &\le G^\bbT(o,o) \, \left( 1 + \delta'_1 \right)^{ - \delta'_1 \, |\bbT'_{\overline{K}}|}
  \le G^\bbT(o,o) \, \exp ( - (\delta'_1)^2 \, |\bbT'_{\overline{K}}| ). }

Let us choose 
\[
 \overline{K}
  = \min \left\{ k \ge 0 : \left| \bbT'_k \right| \ge C_3 \log t \right\}, 
\]
where $C_3 = C_3(\bp') := [ 2 (\delta'_1)^2 ]^{-1}$. With this choice, we have
\eqn{e:term1-bound}
{ \ProbT [ N \ge \overline{K}+1 ]
  \le G^\bbT(o,o) \, t^{-1/2}. }

We turn to the second term in the right hand side of \eqref{e:three-terms}. 
Since $S(\eta) = |W^1(\eta)| + \dots + |W^{N}(\eta)|$, where
$W^1(\eta) \supset \dots \supset W^{N}(\eta)$,  see Lemma \ref{lem:waves} (iii), we can write 
\eqnsplst
{ \ProbT [ 1 \le N \le K_2,\, S > t ]
  \le \ProbT [ |W^1(\eta)| > t/K_2 ] 
  \le G^\bbT(o,o) \, \ProbT [ |\fF_o| > t/K_2 ], }
where we used Lemma \ref{lemma:SP-WSF} in the last step.
An application of Theorem \ref{thm:noK} gives:
\eqnspl{e:term2-bnd}
{ \ProbT [ 1 \le N \le K_2,\, S > t ]
  \le C_1 \, G^\bbT(o,o) \, N_1^{1/2} \, \overline{\cC}(o) \, K_2^{1/2} \, t^{-1/2}. }

Finally, we bound the third term in the right hand side of \eqref{e:three-terms}.
Let $K_2 \le k < \overline{K}$.
Using again \eqref{def:S-infini}, Lemma \ref{lem:waves} and Lemma \ref{lemma:SP-WSF}, 
we can write
\eqnspl{e:middle-term-expand}
{ \ProbT [ N = k+1,\, S > t ]
  &\le \ProbT \Big[ N = k+1, \, |W^1(\eta)| > \frac{t}{k+1} \Big] \\
  &\le \ProbT \Big[ N \ge k+1, \, |W^1(\eta)| > \frac{t}{k+1} \Big] \\
  &= \ProbT \Big[ W^1(\eta) \supset \bbT'_{k},\, |W^1 (\eta)| > \frac{t}{k+1} \Big] \\
  &\le G^\bbT(o,o) \, \ProbT \Big[ \fF_o \supset \bbT'_{k},\, |\fF_o| > \frac{t}{k+1} \Big] \\
  &= G^\bbT(o,o) \, \ProbT \big[ \fF_o \supset \bbT'_{k} \big] \, 
    \probT \Big[ | \fF_o| > \frac{t}{k+1} \,\Big|\, \fF_o \supset \bbT'_k \Big]. } 
An application of Lemma \ref{lem:num-waves} yields that 
\eqn{e:wave-prob-term}
{ \ProbT \big[ \fF_o \supset \bbT'_{k} \big]
  \le \prod_{v \in \bbT'_{k}}
      \frac{1}{1 + \cC(v)}. }
We proceed to bound the conditional probability in the right hand side of
\eqref{e:middle-term-expand}.  For any $w \in \bbT'_k$, let us write
$\fF_{o,w} = \fF_o \cap \bbT(w)$. This way, conditionally on
$\fF_o \supset \bbT'_k$, we have
\eqnst
{ \fF_o
  = \cT \cup \Big( \bigcup_{w \in \bbT'_{k}} \fF_{o,w} \Big), }
where $\mathcal{T}$ was defined in \eqref{eq:calT}, and  where
 the conditional distribution of $\fF_{o,w}$ equals
that of $\fF_o(\bbT(w))$.
Then, using the restriction $k \ge K_2$, we have
\eqnspl{e:k-wave-bound}
{ &\probT \Big[ |\fF_o| > \frac{t}{k+1} \,\Big|\, \fF_o \supset \bbT'_{k} \Big] 
  \le \probT \bigg[ \sum_{w \in \bbT'_{k}} |\fF_{o,w}|
      > \frac{t}{k+1} - |\cT| \,\bigg|\, \fF_o \supset \bbT'_{k} \bigg] \\
  &\qquad \le \probT \bigg[ \sum_{w \in \bbT'_{k}} |\fF_{o,w}|
      > \frac{t}{k+1} - (\alpha \, \delta_0)^{-1} \, |\bbT'_{k}|
      \,\bigg|\, \fF_o \supset \bbT'_{k} \bigg] \\
  &\qquad \le
      \sum_{w \in \bbT'_{k}} P^{ \bbT(w)}
      \left[ |\fF_{o}(\bbT(w))| > \frac{t}{(k+1) \,  |\bbT'_{k}|}
      - (\alpha \, \delta_0)^{-1} \right] \\
  &\qquad \le 
      \sum_{w \in \bbT'_{k}} P^{ \bbT(w)}
      \left[ |\fF_{o}(\bbT(w))| > \frac{t}{2 \, (k+1) \, |\bbT'_{k}|} \right]. }  
In the second inequality we use \eqref{eq:c} and 
in the last step we use that on the one hand $k < \overline{K}$ implies
$|\bbT'_k| < C_3  \, \log t$ and on the other hand 
$k \le | \bbT'_{<k} | \le (\delta'_0)^{-1} |\bbT'_k| \le (\delta'_0)^{-1} \,  C_3  \, \log t$
(cf. \eqref{eq:crucbd}) and hence the inequality follows for $t \ge t_1 = t_1(\bp)$.

Applying Theorem \ref{thm:noK} to the probability in the
right hand side of \eqref{e:k-wave-bound} yields the upper
bound
\[
  C' \, t^{-1/2} \, (k+1)^{1/2} \, |\bbT'_{k}|^{1/2} \,
     \sum_{w \in \bbT'_{k}}
     \overline{\cC}(w) \, N_1^{1/2}(\bbT(w)). 
\]
Due to $k \ge K_2 \ge K_1$, and Lemma \ref{lem:as-bnds}(i), this expression
is at most
\eqnspl{e:large-k-bnd}
{ 
  C'' \, t^{-1/2} \, (k+1)^{1/2} \, |\bbT'_{k}| \, \sum_{w \in \bbT'_{k}} \overline{\cC}(w). }  
Substituting  \eqref{e:wave-prob-term} and \eqref{e:large-k-bnd} into the 
right hand side of \eqref{e:middle-term-expand} and using Lemma \ref{lem:as-bnds}(ii)
yields 
\eqnspl{e:term3-bnd}
{ \sum_{K_2 \le k < \overline{K}} \ProbT [ N = k+1,\, S > t ]
  \le C \, t^{-1/2} \, G^\bbT(o,o) \, \sum_{k \ge K_2} \exp ( - c k )
  \le C \, G^\bbT(o,o) \, t^{-1/2}. }
The inequalities \eqref{e:term1-bound}, \eqref{e:term2-bnd} and \eqref{e:term3-bnd} 
substituted into \eqref{e:three-terms} complete the proof of the theorem.
\end{proof}

\section{Annealed bounds}\label{sec:annealed}

Finally, we prove annealed bounds.

\begin{theorem}
\label{thm:annealed}
(i) Under Assumption \eqref{e:ass-beta}, there exists
$c = c( \bp ) > 0$ such that
\eqnst
{ \E \big[ \nu_{\T} [ S > t ] \,\big|\, \text{$\bbT$ survives} \big]
  \ge \E \big[  \nu_{\T}[ |\Av (\eta) | > t ] \,\big|\, \text{$\bbT$ survives} \big]
  \ge c \, t^{-1/2}. }
(ii) Assume that $1 < \sum_{k \ge 0} k p_k \le \infty$.
There exists $C = C( \bp )$ such that
\eqnst
{ \E \big[  \nu_{\T} [ |\Av (\eta) | > t ] \,\big|\, \text{$\bbT$ survives} \big]
  \le \E \big[  \nu_{\T} [ S > t ] \,\big|\, \text{$\bbT$ survives} \big]
  \le C \, t^{-1/2}. }
\end{theorem}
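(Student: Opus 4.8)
\emph{Part (i).} The first inequality is immediate from $S(\eta)\ge|\Av(\eta)|$. For the second, the hypotheses of part~(i) are exactly those of Theorem~\ref{thm:av-lb}, which provides on $F$ a $\cG$-measurable, a.s.\ positive $c_0=c_0(\bbT)\le 1$ with $\nu_\T[|\Av(\eta)|>t]\ge\nu_\T[|W^1(\eta)|>t]\ge c_0(\bbT)\,t^{-1/2}$. Taking $\E[\,\cdot\mid F]$ and using that the law of $\bbT$ depends only on $\bp$ gives $\E[\nu_\T[|\Av(\eta)|>t]\mid F]\ge\E[c_0(\bbT)\mid F]\,t^{-1/2}=:c(\bp)\,t^{-1/2}$ with $c(\bp)>0$ (as usual the constant is harmlessly reduced to cover small $t$, where $\E[\nu_\T[|\Av(\eta)|>t]\mid F]>0$ anyway).

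\emph{Part (ii).} The first inequality is again $S\ge|\Av|$, so only $\E[\nu_\T[S>t]\mid F]\le C\,t^{-1/2}$ has to be shown. Integrating the quenched bound of Theorem~\ref{thm:av-ub} and then bounding $\Prob[N_2(\bbT)>t\mid F]$ is awkward under only a first moment, because prefactors such as $\overline{\cC}(o)\le\deg^+(o)$ are integrable only to the first power. I would instead reprove Theorem~\ref{thm:av-ub} in annealed form, taking $\E[\,\cdot\mid F]$ from the outset of the wave reduction of Section~\ref{sec:waves}. Starting from $\nu_\T[S>t]\le\sum_{k\ge0}\nu_\T[N\ge k+1,\ |W^1(\eta)|>t/(k+1)]$ (valid since $S\le(k+1)|W^1|$ on $\{N=k+1\}$, by Lemma~\ref{lem:waves}(iii)) and using Lemmas~\ref{lemma:SP-WSF} and~\ref{lem:num-waves}, the $k$-th term is at most $G^\bbT(o,o)\,\probT[\fF_o\supset\bbT'_k,\ |\fF_o|>t/(k+1)]$. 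The observation making the annealed estimate work under only a first moment is a cancellation of conductances: $G^\bbT(o,o)=1/\cC(o)$, while the first form of Theorem~\ref{thm:noK}, $\probT[|\fF_o|>s]\le C_1\cC(o)\,s^{-1/2}$ for $s\ge\max\{t_0,N_1(\bbT)\}$, carries the matching factor $\cC(o)$, so the two combine to the \emph{constant} $C_1 s^{-1/2}$, with no conductance left to integrate. The same cancellation recurs one level down: conditioning on $\fF_o\supset\bbT'_k$ and decomposing $\fF_o=\cT(k)\cup\bigcup_{w\in\bbT'_k}\fF_{o,w}$ with $\fF_{o,w}$ distributed as the $\WSF_o$-component of $w$ in $\bbT(w)$ (cf.\ \eqref{eq:calT}, \eqref{e:k-wave-bound}), the factor $\cC(w)$ produced by Theorem~\ref{thm:noK} inside $\bbT(w)$ cancels the Wilson factor $(1+\cC(w))^{-1}$ of Lemma~\ref{l:cond-alt}/Lemma~\ref{lem:num-waves}.

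After these cancellations the $k$-th term is, up to inverse powers of $t$, controlled by: the product $\prod_{v\in\bbT'_k}(1+\cC(v))^{-1}$, which Proposition~\ref{prop:x0-good} (applied to $A=\bbT'_{<k}$ in the tree $\bbT'$) bounds by $\exp(-c\,|\bbT'_k|)\le\exp(-c\,\delta'_0 k)$ on $\{|\bbT'_{<k}|\ge N_1(\bbT')\}$ (cf.\ \eqref{eq:crucbd}), supplying geometric decay in $k$; a leftover $G^\bbT(o,o)$ in the $k\ge1$ terms (which does not cancel), handled by Cauchy--Schwarz; and a handful of small-probability events on each of which one uses a trivial bound and an exponential tail, namely $\{N_1(\bbT)>t\}$ for $k=0$, $\{|\bbT'_{<k}|<N_1(\bbT')\}$, $\{k<K_1\}\cup\{k<K_2\}$, the event that Theorem~\ref{thm:noK} is not yet applicable inside some $\bbT(w)$ with $w\in\bbT'_k$, and $\{N_1(\bbT')>c\log t\}$ (this last controlling a deterministic cutoff $\overline K(t)\asymp\log t$ beyond which $\{N\ge\overline K+1\}$ is estimated directly by Lemma~\ref{lem:num-waves} as in the proof of Theorem~\ref{thm:av-ub}); the relevant tails are \eqref{e:CP-tail} and Lemmas~\ref{lem:as-bnds} and~\ref{lem:cT-bnd}. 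The Cauchy--Schwarz step uses $\E[G^\bbT(o,o)^2\mid F]<\infty$, itself a short lemma: $G^\bbT(o,o)=1/\cC(o)\le 1+\caR(y_1\leftrightarrow\infty;\,\bbT'(y_1))$ with $y_1$ the first child of $o$ having an infinite line of descent, and $\caR(y_1\leftrightarrow\infty;\,\bbT'(y_1))\le 32\,N_1(\bbT'(y_1))+C$ by \cite[Theorem~6.41]{LP16} with the anchored isoperimetric profile exactly as in Proposition~\ref{prop:x0-good} (profile $\equiv 1$ below scale $2N_1(\bbT'(y_1))$ and $\tfrac{\delta'_0}{2+\delta'_0}s$ above it), so $G^\bbT(o,o)$ inherits an exponential moment. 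Summing over $0\le k\le\overline K(t)$ and adding the direct bound for $k>\overline K(t)$ then yields $\E[\nu_\T[S>t]\mid F]\le C\,t^{-1/2}$.

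The main obstacle is the bookkeeping for $k\ge1$: the quantities $|\bbT'_k|$, $N_1(\bbT(w))$, $K_1$, $K_2$ and $G^\bbT(o,o)$ all enter the same term and are correlated, so the truncations must be arranged so that every resulting conditional expectation is finite under only a first moment and the $k$-series converges at rate $t^{-1/2}$. This is exactly what the two conductance cancellations buy: the only quantities carrying merely a first moment, $\cC(o)$ and below it $\cC(w)\le\deg^+(w)$, never survive into an expectation, while everything that does (the various $N_1$'s, $K_1$, $K_2$, and $G^\bbT(o,o)$ via the lemma above) has an exponential tail.
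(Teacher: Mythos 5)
Your part (i) is exactly the paper's argument: take $\E[\,\cdot\mid F]$ in \eqref{e:av-lb} and use that $c_0(\bbT)>0$ a.s.\ on $F$. For part (ii) your skeleton coincides with the paper's: the paper also does not integrate the final statement of Theorem \ref{thm:av-ub} (your worry about that route is well founded) but instead takes expectations of the three intermediate bounds \eqref{e:term1-bound}, \eqref{e:term2-bnd}, \eqref{e:term3-bnd}, reusing Lemma \ref{lemma:SP-WSF}, Lemma \ref{lem:num-waves}, Proposition \ref{prop:x0-good} for the geometric decay in $k$, and the exponential tails of $N_1$, $K_1$, $K_2$. Your ``cancellation of conductances'' is precisely the paper's identity $G^\bbT(o,o)\,\overline{\cC}(o)=\max\{1,\caR(o)\}$ (you cancel exactly to $1$ by using the first form of Theorem \ref{thm:noK} and then pay for the event $\{t<\max\{t_0,N_1\}\}$ separately; the paper keeps $\max\{1,\caR(o)\}$ and absorbs it into the expectation). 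The one genuine divergence is how the Green function is controlled: you establish $\E[G^\bbT(o,o)^2\mid F]<\infty$ via anchored isoperimetry, bounding $\caR(y_1\leftrightarrow\infty;\bbT'(y_1))$ by a constant times $N_1(\bbT'(y_1))$ through \cite[Theorem~6.41]{LP16}, and then apply Cauchy--Schwarz; the paper instead proves the stronger statement that $\caR(o)$ has an exponential moment, by a self-contained series--parallel computation along the branching skeleton $v_{\eps_1,\dots,\eps_k}$ of $\bbT'$ (geometric inter-branching distances, harmonic--arithmetic mean inequality, and Jensen). Your route is shorter and recycles the machinery of Proposition \ref{prop:x0-good}; the paper's gives a cleaner and stronger tail bound that makes the final ``product of exponential-tail variables'' step immediate. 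Both are correct; the bookkeeping of exceptional events you list (e.g.\ $\{\overline K(t)<K_2\}$, $\{|\bbT'_{<k}|<N_1(\bbT')\}$) is if anything spelled out more explicitly in your version than in the paper's rather terse treatment.
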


\begin{proof}
Part (i) follows immediately after taking expectations
in \eqref{e:av-lb} of Theorem \ref{thm:av-lb}.

For part (ii), we take expectations in the right hand sides of
\eqref{e:term1-bound}, \eqref{e:term2-bnd} and \eqref{e:term3-bnd}. 
We detail the bound on the expectation of \eqref{e:term2-bnd}, the 
other two are similar and simpler. Recall the notation 
  $\caC(o)$ in \eqref{eq:defC}, and  
$\overline{\cC}(o)$ in \eqref{eq:defoverC}. We similarly denote by 
$\caR(o)$  the effective resistance in $\bbT$ from $o$ to infinity. 
We have $G^\bbT(o,o) = \caR(o)$.
Therefore, $G^\bbT(o,o) \, \overline{\cC}(o) = \max \{ 1,\, \caR(o) \}$, and
we need to bound the expectation of 
\[ 
\max \{ 1, \, \caR(o) \} \, N_1^{1/2} \, K_2^{1/2}. 
\]
Here $N_1$ has an exponential tail, due to \eqref{e:CP-tail}, and 
$K_2$ has an exponential tail due to Lemma \ref{lem:cT-bnd}.
We now show that $\caR(o)$ also has an exponential tail, which immediately
implies that the expectation is finite.

First observe that $\caR(o)$ is also the effective resistance in $\bbT'$
from $o$ to infinity, hence we may restrict to $\bbT'$. Recall that
$\{ p'_k \}_{k \ge 0}$ denotes the offspring distribution of $\bbT'$.
In the case $p'_1 = 0$, there is at least binary branching, and hence
$\caR(o) \le 1$. Henceforth we assume $0 < p'_1 < 1$.
Let $v_\es$ be the first descendant of $o$ in $\bbT'$, where 
the tree branches, that is, there are single offspring until 
$v_\es$,  but $v_\es$ has at least two offspring.  Consider only the 
first two offspring of $v_\es$. Let $v_{1}$ and
$v_{2}$ be the first descendants of $v_\es$ where branching occurs,
that is, each individual on the path between $v_\es$ and $v_i$ has a 
single offspring, but $v_i$ has at least two offspring ($i = 1, 2$).
Analogously, we define
$v_{\eps_1, \dots, \eps_k}$ for each 
$(\eps_1, \dots, \eps_k ) \in \{ 1, 2 \}^k$, $k \ge 0$.

Let $R_\es$ be the resistance between $o$ and $v_\es$ (this is 
the same as the generation difference, since each edge has resistance 1), 
let $R_{v_i}$ be the resistance between $v_{\emptyset}$ and $v_i$ (for $i=1,2$) and 
more generally let $R_{\eps_1, \dots, \eps_k}$ be the resistance between 
$v_{\eps_1, \dots, \eps_{k-1}}$ and $v_{\eps_1, \dots, \eps_k}$
for $k \ge 1$. These random variables are independent, and apart 
from $R_\es$, they are identically distributed with 
distribution 
$\Prob [ R_{\eps_1,\dots,\eps_k} = r ] = (p'_1)^{r-1} (1 - p'_1)$, $r \ge 1$.
The variable $R_\es$ has distribution:
$\Prob [ R_\es = r ] = (p'_1)^{r} (1 - p'_1)$, $r \ge 0$.

For any $0 < t < - \log (p'_1)$ the resistance variables 
all satisfy the bound
\eqnst
{ \E [ \exp ( t R_{\eps_1, \dots, \eps_k} ) ]
  \le \phi(t) 
  := \frac{1 - p'_1}{p'_1} \frac{p'_1 e^{t}}{1 - p'_1 e^t}  
  = \frac{(1-p'_1) e^{t}}{1-p'_1 e^t}. }
We fix $t_0 = - \frac{1}{2} \log (p'_1) > 0$, so that for all $0 < t \le t_0$
the right hand side is bounded above by $(1 + \sqrt{p_1})/\sqrt{p_1} = \sqrt{C'_2} < \infty$.

By the series and parallel laws, the resistance
between $o$ and $\{ v_{1}, v_{2} \}$ is 
\eqnspl{e:resist}
{ R_\es + \frac{1}{\frac{1}{R_{1}} + \frac{1}{R_{2}}}. }
By the inequality between the harmonic mean and arithmetic mean,
\eqref{e:resist} can be bounded above by
\eqnsplst
{ R_\es + \frac{1}{2} \, \frac{1}{\frac{\frac{1}{R_{1}} + \frac{1}{R_{2}}}{2}}
  \le R_\es + \frac{1}{2} \, \frac{R_{1} + R_{2}}{2}
  = R_\es + \frac{R_{1}}{4} + \frac{R_{2}}{4}. } 
Iterating this argument, we get for the effective resistance  $\caR(o)$ 
between $o$ and infinity,  
\eqnsplst
{ \caR(o) \le R_\es + \frac{1}{4} (R_{1} + R_{2})
      + \frac{1}{16} (R_{1,1} + R_{1,2} + R_{2,1} + R_{2,2})
      + \dots. }
Consequently, by Jensen's inequality, we have
\eqnsplst
{ \E [ \exp (t \caR(o)) ]
  &\le \E [ \exp ( t R_1 ) ] \, \E \left [ \exp \left ( \frac{t}{2} R_{1} \right ) \right ] \,
      \E \left[ \exp \left ( \frac{t}{4} R_{1,1} \right ) \right ] \, \dots \\
  &\le \E [ \exp ( t R_1 ) ] \, \E [ \exp ( t R_{1} ) ]^{1/2} \,
      \E [ \exp ( t R_{1,1} ) ]^{1/4} \, \dots \\
  &\le \phi(t)^{1 + \frac{1}{2} + \frac{1}{4} + \dots} \\
  &= \phi(t)^2 
  \le C'_2, \quad 0 < t \le t_0. }
This yields the claimed exponential decay, and the proof is complete.
\end{proof}

\noindent
\textbf{Acknowledgements:}
 The authors thank the following institutions for hospitality and support:
 MAP5 lab. at Universit\'e Paris Descartes, Bath University, Delft University
 as well as Institut Henri Poincar\'e (UMS 5208 CNRS-Sorbonne Universit\'e, endowed with
 LabEx CARMIN, ANR-10-LABX-59-01) - Centre Emile Borel, 
 where  part of this work was done during the trimester 
 ``Stochastic Dynamics Out of Equilibrium''. 
Finally, the authors would like to thank the anonymous referee for 
suggestions that improved the manuscript significantly.


\begin{thebibliography}{99}

\bibitem{AL07}
D.J. Aldous and R. Lyons:
Processes on unimodular random networks.
\emph{Electron. J. Probab.} {\bf 12}, 1454--1508 (2007).

\bibitem{AJ04}
S.R. Athreya and A.A. J\'arai:
Infinite Volume Limit for the Stationary Distribution of Abelian Sandpile Models.
\emph{Comm. Math. Phys.} {\bf 249}, 197--213 (2004).

\bibitem{BTW88}
P. Bak, C. Tang and K. Wiesenfeld:
Self-organized criticality.
\emph{Phys. Rev. A} {\bf 38}, 364--374 (1988).

\bibitem{BP03}
J.M. Beggs and D. Plenz:
Neuronal avalanches in neocortical circuits.
\emph{J. Neurosci.} {\bf 23}, 11167--11177 (2003).

\bibitem{BHJ17}
S. Bhupartiraju, J. Hanson and A.A. J\'arai:
Inequalities for critical exponents in $d$-dimensional sandpiles.
\emph{Electron. J. Probab.} {\bf 22}, Paper No. 85, 51 pp. (2017).

\bibitem{CP04}
D. Chen and Y. Peres:
Anchored expansion, percolation and speed.
\emph{Ann. of Prob.} {\bf 32}, 2978--2995 (2004).

\bibitem{CB03}
R. Cori and Y. Le Borgne:
The sandpile model and Tutte polynomials.
\emph{Adv. in Appl. Math.} {\bf 30}, Issue 1-2, 44--52 (2003).

\bibitem{D90}
D. Dhar:
Self-organized critical state of sandpile automaton models.
\emph{Phys. Rev. Lett.} {\bf 64}, 1613--1616 (1990).

\bibitem{D06}
D. Dhar:
Theoretical studies of self-organized criticality.
\emph{Phys. A} {\bf 369}, 29--70 (2006).

\bibitem{DM90}
D. Dhar and S.N. Majumdar:
Abelian sandpile model on the Bethe lattice.
\emph{J. Phys. A: Math. Gen.} {\bf 23}, 4333--4350 (1990).

\bibitem{H63}
T.E. Harris:
\emph{The theory of branching processes.}
Springer, Berlin (1963).

\bibitem{H08}
A.E. Holroyd, L. Levine, K. M\'esz\'aros, Y. Peres, J. Propp and D.B. Wilson:
Chip-firing and rotor-routing on directed graphs.
In: In and out of Equilibrium.,  2. V. Sidoravicius and M.E. Vares (eds.),
\emph{Progr. Probab.} {\bf 60}, Brikh\"auser, Basel, 331--364 (2008).

\bibitem{HPC}
T. Hutchcroft: 
Personal Communication.

\bibitem{H18}
T. Hutchcroft:
Interlacements and the Wired Uniform Spanning Forest.
\emph{Ann. Probab.}
\textbf{46}, Number 2, 1170--1200 (2018).

\bibitem{H19}
T. Hutchcroft:
Universality of high-dimensional spanning forests and sandpiles.
\emph{Probab. Theory Related Fields}, to appear.



\bibitem{IKP94}
E.V. Ivashkevich, D.V. Ktitarev and V.B. Priezzhev:
Waves of topplings in an abelian sandpile.
\emph{Physica A} {\bf 209}, 347--360 (1994).

\bibitem{JL93}
S.A. Janowsky and C.A. Laberge:
Exact solutions for a mean-field Abelian sandpile.
\emph{J. Phys. A: Math. and Gen.}, \textbf{26} (19), L973 (1993).

\bibitem{J14}
A.A. J\'arai:
Sandpile models.
\emph{Probab. Surv.} {\bf 15}, 243--306 (2018).

\bibitem{JR08}
A.A. J\'arai and F. Redig:
Infinite volume limit of the abelian sandpile model in dimensions $d \geq 3$.
\emph{Probab. Theory Related Fields} \textbf{141}, 181--212 (2008).

\bibitem{JRS15}
A.A. J\'arai, F. Redig and E. Saada:
Approaching criticality via the zero dissipation limit in the abelian avalanche model.
\emph{J. Stat. Phys.} {\bf 159} (6), 1369--1407 (2015).

\bibitem{JW12}
A.A. J\'arai and N. Werning:
Minimal configurations and sandpile measures.
\emph{Jour. Theor. Prob.}, {\bf 27} (1), 153--167 (2014).

\bibitem{K47}
G. Kirchhoff:
\"Uber die Aufl\"osung der Gleichungen, auf welche man bei der Untersuchung
der linearen Verteilung Galvanischer Str\"ome gef\"uhrt wird.
\emph{Ann. der Physik} \textbf{148} (12), 497--508 (1847).

\bibitem{LMS08}
R. Lyons, B. Morris and O. Schramm:
Ends in uniform spanning forest.
\emph{Electron. J. Probab.} {\bf 13}, paper no.~58, 1702--1725 (2008).

\bibitem{LP16}
R. Lyons and Y. Peres:
\emph{Probability on trees and networks.}
Cambridge University Press, Cambridge (2016).

\bibitem{MRS02}
C. Maes, F. Redig and E. Saada:
The Abelian sandpile model on an infinite tree.
\emph{Ann. of Prob.} {\bf 30} (4), 2081--2107 (2002).

\bibitem{MD92}
S.N. Majumdar and D. Dhar:
Equivalence between the Abelian sandpile model and the $q\rightarrow 0$ limit of the Potts model.
\emph{Physica A} {\bf 185}, 129--145 (1992).

\bibitem{M03}
B. Morris:
The components of the wired spanning forest are recurrent.
\emph{Probab. Theory Related Fields} \textbf{125}, 259--265 (2003).

\bibitem{R06}
F. Redig:
Mathematical aspects of the abelian sandpile model.
\emph{Les Houches lecture notes in Mathematical statistical physics.}, 
657--729, Session XXXIII, Elsevier B. V., Amsterdam (2006).

\bibitem{RRS12}
F. Redig, W.M. Ruszel and E. Saada:
The abelian sandpile model on a random binary tree.
\emph{J. Stat. Phys.} {\bf 147} (4), 653--677 (2012).

\bibitem{RRS17}
F. Redig, W.M. Ruszel and E. Saada:
Non-criticality of the Abelian sandpile model on a random tree and related models.
\emph{J. Math. Phys.}, {\bf 59}, (6), 1--16 (2018).

\bibitem{R13}
P. Ruelle:
Logarithmic conformal invariance in the Abelian sandpile model.
\emph{J. Phys. A: Math. Theor.} {\bf 46}, 494014 (2013).

\bibitem{W96}
D.B. Wilson:
Generating random spanning trees more quickly than the cover time.
\emph{Proceed of the Twenty-eights Annual ACM Symposium on the Theory of Computing}, 
ACM, New York, 296--303 (1996).


\end{thebibliography}
\end{document}